\newcommand{\CC}{{\mathbb C}}
\newcommand{\C}{{\mathbb C}}
\newcommand{\cD}{{\mathcal D}}
\newcommand{\cH}{{\mathcal H}}
\newcommand{\cO}{{\mathcal O}}
\newcommand{\cQ}{{\mathcal Q}}
\newcommand{\cM}{{\mathcal M}}
\newcommand{\R}{{\mathbb R}}
\newcommand{\ZZ}{{\mathbb Z}}
\newcommand{\Z}{{\mathbb Z}}
\newcommand{\NN}{{\mathbb N}}
\newcommand{\QQ}{{\mathcal Q}}
\newcommand\N{{\mathbb{N}}}
\newcommand\HAb {{H_A (\beta )}}
\newcommand\D{{\mathcal{D}}}
\newcommand\Q{{\mathbb{Q}}}
\newcommand\M{{\mathcal{M}}}
\newtheorem{teo}{Theorem}[section]
\newtheorem{prop}[teo]{Proposition}
\newtheorem{defi}[teo]{Definition}
\newtheorem{ejem}[teo]{Example}
\newtheorem{lema}[teo]{Lemma}
\newtheorem{cor}[teo]{Corollary}
\newtheorem{nota}[teo]{Remark}
\title{Irregular hypergeometric $\D$-modules}
\author{María-Cruz Fernández-Fernández \thanks{Supported by the F.P.U. Fellowship AP2005-2360, Spanish Ministry of Education, and partially
supported by MTM2007-64509 and FQM333.
e.mail address: {\tt mcferfer@us.es}}\\
Departamento de \'{A}lgebra \\ Universidad de Sevilla}
\date{July 15, 2009}
\begin{document}

\maketitle

\begin{abstract}
We study the irregularity of hypergeometric $\D$-modules
$\mathcal{M}_A (\beta)$ via the explicit construction of Gevrey
series solutions along coordinate subspaces in $X=\C^n$. As a
consequence, we prove that along coordinate hyperplanes the
combinatorial characterization of the slopes of $\M_A (\beta)$ given
by M. Schulze and U. Walther in \cite{SW} still holds for any full
rank integer matrix $A$. We also provide a lower bound for the
dimensions of the spaces of Gevrey solutions along coordinate
subspaces in terms of volumes of polytopes and prove the equality
for very generic parameters. Holomorphic solutions outside the
singular locus of $\M_A (\beta )$ can be understood as Gevrey
solutions of order one along $X$ at generic points and so they are
included as a particular case.
\end{abstract}

\tableofcontents

\section{Introduction}

This paper is devoted to the study of the irregularity of the
GKZ-hypergeometric $\D$-modules. To this end we explicitly construct
Gevrey series solutions along coordinate subspaces in $\C^n$. Let us
first recall some notions and results about the irregularity in
$\D$-module Theory.

\vspace{.3cm}

\indent Let $X$ be a complex manifold and $\D_X$ the sheaf of linear
partial differential operators with coefficients in the sheaf of
holomorphic functions $\cO_X$.

\vspace{.3cm}

One fundamental problem in the study of the irregularity of a
holonomic $\D_X$-module $\mathcal{M}$ is the description of its {\em
analytic slopes} along smooth hypersurfaces $Y$ in $X$ (see Z.
Mebkhout \cite{Mebkhout}). An analytic slope is a gap $s > 1$ in the
Gevrey filtration $\operatorname{Irr}_Y^{(s)}(\M )$ of the
irregularity complex $\operatorname{Irr}_Y (\mathcal{M})$ (see
Definitions \ref{irrefularity-s} and \ref{trans-slope}).

\vspace{.3cm}

Y. Laurent also defined the {\em algebraic slopes} of $\mathcal{M}$
along a smooth variety $Z$ (\cite{Laurent-ast-85},
\cite{Laurent-ens-87}) as those real numbers $s>1$ such that the
$s$-micro-characteristic variety of $\mathcal{M}$ with respect to
$Z$ is not homogeneous with respect to the filtration by the order
of the differential operators. He proved that the set of slopes of
$\M$ along $Z$ is a finite set of rational numbers (see
\cite{Laurent-ens-87}).

\vspace{.3cm}

When $\M$ is a holonomic $\D$-module and $Y$ is a smooth
hypersurface, the Comparison Theorem of the slopes (due to Laurent
and Mebkhout \cite{Laurent-Mebkhout}) states that the algebraic
slopes coincide with the analytic ones. However, as far as we know,
the analytic slopes of a holonomic $\D$-module along varieties $Z$
of codimension greater than one are not defined yet in the
literature. One problem is that the complexes
$\operatorname{Irr}_Z^{(s)} (\mathcal{M})$ and $\operatorname{Irr}_Z
(\mathcal{M})$ are constructible but they are not necessarily
perverse in such a case (see examples in \cite{Mebkhout4}). The
category of perverse sheaves is an abelian category while the
category of cons\-tructible sheaves is just additive (see
\cite{BBD}).

\vspace{.3cm}

The description of the Gevrey series solutions of a holonomic
$\D$-module $\M$ along a smooth variety $Z$ is another fundamental
problem in the study of its irregularity. If $Y$ is a smooth
hypersurface the index of any non convergent Gevrey solution of $\M$
along $Y$ is an analytic slope of $\M$ along $Y$ (see Definition
\ref{trans-slope}).

\vspace{.3cm}

From now on we consider the complex manifold $X=\C^n$ and denote
$\D:=\D_X$. We also will write $\partial_i:=\frac{\partial}{\partial
x_i}$ for the $i$-th partial derivative.

\vspace{.3cm}

Hypergeometric systems were introduced by Gel'fand, Graev, Kapranov
and Zelevinsky (see \cite{GGZ} and \cite{GKZ}) and they are
associated with a pair $(A,\beta)$ where $A$ is a full rank $d\times
n$ matrix $A=(a_{ij})$ with integer entries ($d\leq n$) and $\beta
\in \C^d$ is a vector of complex parameters. They are left ideals
$H_A (\beta )$ of the Weyl algebra $\C [x_1 ,\ldots ,x_n ]\langle
\partial_1 ,\ldots ,\partial_n \rangle$ generated by the following
set of differential operators:

\begin{equation}
\Box_u := \partial^{u_+}-\partial^{u_{-}} \; \;\; \; \mbox{ for }
u\in \ZZ^n , \; Au=0 \label{Toric-operators}
\end{equation} where $u=u_+-u_-$ and  $u_+, u_- \in \NN^n$ have disjoint
supports, and

\begin{equation}
E_i - \beta_i := \sum_{j=1} ^n a_{ij}x_j\partial_j -\beta_i \; \; \;
\; \mbox{ for } i=1,\ldots, d \label{Euler-operators}
\end{equation}

\vspace{.3cm}

The hypergeometric $\cD$--module associated with the pair $(A,\beta
)$ is the quotient sheaf $\cM_A(\beta)=\cD /\cD H_A (\beta )$.

\vspace{.3cm}

The operators given in (\ref{Toric-operators}) are called the toric
operators associated with $A$ and they generate the so-called toric
ideal $I_A \subseteq \CC[\partial_1,\ldots,\partial_n] $ associated
with $A$. It is a prime ideal whose zeros variety $\mathcal{V}(I_A
)\subseteq \C^n$ is an affine toric variety with Krull dimension $d$
(see for example \cite{Sturm}). The operator $E_i$ is called the
$i$-th Euler operator associated with $A$ for $i=1,\ldots ,d$.

\vspace{.3cm}

A good introduction for the theory of hypergeometric systems is
\cite{SST}. These systems are known to be holonomic and their
holonomic rank (equivalently, the dimension of the space of
holomorphic solutions at nonsingular points) is the normalized
volume of the matrix $A=(a_i )_{i=1}^n \in \Z^{d\times n}$ with
respect to the lattice $\Z A :=\sum_{i=1}^n \Z a_i \subseteq \Z^d$
(see Definition \ref{def-normal-vol}) when either $\beta$ is generic
or $I_A$ is Cohen-Macaulay (see \cite{GKZ}, \cite{Adolphson}). For
results about rank-jumping parameters $\beta$ see \cite{MMW},
\cite{Berkesch} and the references therein. Several authors have
studied the holomorphic solutions at nonsingular points of
$\cM_A(\beta)$ (see \cite{GKZ}, \cite{SST} and
\cite{Ohara-Takayama}).

\vspace{.3cm}

A theorem of R. Hotta \cite[Ch. II, §6.2, Thm.]{Hotta} assures that
when the toric ideal $I_A$ is homogeneous the hypergeometric
$\D$-module $\cM_A (\beta )$ is regular holonomic. The converse to
this theorem was proved by Saito, Sturmfels and Takayama \cite[Thm.
2.4.11]{SST} when $\beta$ is generic and by Schulze and Walther
\cite[Corollary 3.16]{SW} when $A$ is a \emph{pointed} matrix such
that $\Z A=\Z^d $. A matrix $A$ is said to be pointed if its columns
$a_1 ,\dots ,a_n$ lie in a single open linear half-space of $\R^d$
(equivalently, the associated affine toric variety
$\mathcal{V}(I_A)$ passes through the origin). On the other hand,
when $A$ is non-pointed then $\M_A (\beta )$ is never regular
holonomic: the existence of a toric operator $\partial^u -1 \in
I_A$, $u\in \N^n$, implies that the holonomic rank of some initial
ideals of $H_A (\beta )$ is zero and this cannot hold for regular
holonomic ideals with positive rank (see \cite[Thm. 2.5.1.]{SST}).

\vspace{.3cm}

Let us explain the structure of this paper. In Section
\ref{section-general} we recall some general definitions (Gevrey
series, irregularity and analytic slopes of a holonomic $\D$-module)
and prove Lemma \ref{lema-slope} which concerns the slopes of
holonomic $\D$-modules and will be used in the sequel.

\vspace{.3cm}

In Section \ref{construction-Gevrey} we consider a simplex $\sigma$,
i.e., a set $\sigma\subseteq\{1,\ldots ,n\}$ such that
$A_{\sigma}=(a_i )_{i\in \sigma}$ is an invertible submatrix of $A$,
and we use the $\Gamma$--series introduced in \cite{GKZ} and
slightly generalized in \cite{SST} to explicitly construct a set of
linearly independent Gevrey solutions of $\M_A (\beta )$ along
$Y_{\sigma}=\{x_i =0 :\; i\notin \sigma\}$. The cardinal of this set
of solutions is the normalized volume of $A_{\sigma }$ with respect
to the lattice $\Z A$ and we prove that they are Gevrey series of
order $s=\max \{|A_{\sigma}^{-1}a_i | :\; i\notin \sigma \}$ along
the coordinate subspace $Y=\{x_i =0 :\; |A_{\sigma}^{-1}a_i|>1
\}\supseteq Y_{\sigma }$.  Moreover, we also prove that $s$ is their
Gevrey index when $\beta$ is very generic.

\vspace{.3cm}

In Section \ref{slopes-simplex-section} we construct for any simplex
$\sigma$ and for all $\beta$ a set of Gevrey series along $Y$ with
index $s$ that are solutions of $\M_A (\beta )$ modulo the sheaf of
Gevrey series with lower index. This implies for $s>1$ that $s$ is a
slope of $\M_A (\beta)$ along $Y$ when $Y$ is a hyperplane by Lemma
\ref{lema-slope}.

\vspace{.3cm}

In Section \ref{slopes-hyperplanes} we describe all the slopes of
$\M_A (\beta )$ along coordinate hyperplanes $Y$ at any point $p\in
Y$ (see Theorem \ref{characterization-slopes}). To this end, and
using some ideas of \cite{SW}, we prove that the
$s$-micro-characteristic varieties with respect to $Y$ of $\M_A
(\beta )$ are homogeneous with respect to the order filtration for
all $s\geq 1$ but a finite set of candidates $s$ to be algebraic
slopes. Then we use the results in Sections
\ref{construction-Gevrey} and \ref{slopes-simplex-section} to prove
that all the candidates $s$ to be algebraic slopes along hyperplanes
occur as the Gevrey index of a Gevrey series solution of $\M_A
(\beta )$ modulo convergent series and thus they are analytic
slopes. In particular we prove that the set of algebraic slopes of
$\M_A (\beta )$ along any coordinate hyperplane is contained in the
set of analytic slopes without using the Comparison Theorem of the
slopes (due to Laurent and Mebkhout \cite{Laurent-Mebkhout}). We use
this theorem in the converse direction to prove that there are no
more slopes. M. Schulze and U. Walther \cite{SW} described in a
combinatorial way all the algebraic slopes of $\M_A (\beta )$ along
coordinate subspaces assuming that $\Z A=\Z^d $ and that $A$ is
pointed. Previous computations in the particular cases $d=1$ and
$n=d+1$ of the slopes along coordinate hyperplanes appear in
\cite{Castro-Takayama}, \cite{hartillo_trans} and
\cite{hartillo_rmi}.

\vspace{.3cm}

In Section \ref{triangulations-umbrella} we recall the definition of
regular triangulation of a matrix and make some remarks that will be
used in Section \ref{basis-Gevrey}.

\vspace{.3cm}

In Section \ref{basis-Gevrey} we use the Gevrey series constructed
in Section \ref{construction-Gevrey} and convenient regular
triangulations of the matrix $A$ to provide a lower bound for the
dimensions of the Gevrey solution spaces. In particular, the lower
bound that we obtain for the dimension of the formal solution space
of $\M_A (\beta )$ along any coordinate subspace $Y_{\tau}=\{
x_{i}=0:\; i\notin \tau \}$ at generic points of $Y_{\tau}$ is
nothing but the normalized volume of the matrix $A_{\tau}$ with
respect to $\Z A$.

\vspace{.3cm}

In Section \ref{basis-Gevrey2} we prove that this lower bound is
actually an equality for very generic parameters $\beta \in \C^d$
and then we have the explicit description of the basis of the
corresponding Gevrey solution space. Example \ref{contraejemplo}
shows that this condition on the parameters is necessary in general
to obtain a basis. This example also points out a special
phenomenon: some algebraic slopes of $\M_A (\beta )$ along
coordinate subspaces of codimension greater than one do not appear
as the Gevrey index of any formal solution modulo convergent series.

\vspace{.3cm}

Finally, in Section \ref{section-pointed} we assume some conditions
($\Z A =\Z^d$, $A$ is pointed, $\beta$ is non-rank-jumping and $Y$
is a coordinate hyperplane) in order to use some multiplicity
formulas for the $s$-characteristic cycles of $\M_{A}(\beta )$
obtained by M. Schulze and U. Walther in \cite{SW} and general
results on the irregularity of holonomic $\D$-modules due to Y.
Laurent and Z. Mebkhout \cite{Laurent-Mebkhout} to compute the
dimension of $\mathcal{H}^0 (\operatorname{Irr}^{(s)}_Y (\M_A (\beta
)))_p$ for generic points $p\in Y$. Then the set of the classes in
$\cQ_Y (s)$ of the Gevrey solutions that we construct along a
hyperplane is a basis for very generic parameters. Moreover, since
$\operatorname{Irr}_Y^{(s)} (\M_A (\beta ))$ is a perverse sheaf on
$Y$ by a theorem of Z. Mebkhout \cite{Mebkhout}, we know that for
all $i\geq 1$ the $i$-th cohomology sheaf of
$\operatorname{Irr}_Y^{(s)} (\M_A (\beta ))$ has support contained
in a subvariety of $Y$ with codimension $i$. This gives the stalk of
the cohomology of $\operatorname{Irr}_Y^{(s)}(\M_A (\beta ))$ at
generic points of $Y$.

\vspace{.3cm}

This paper is very related with \cite{FC1} and \cite{FC2}. In
\cite{FC2} we use deep results in $\D$-module Theory and restriction
theorems to reduce the computation of the cohomology sheaves of
$\operatorname{Irr}_Y^{(s)} (\M_A (\beta ))$ for a pointed one-row
matrix $A$ to the case associated with a $1\times 2$ matrix (that we
solved by elementary methods in \cite{FC1}). We also described a
basis of the Gevrey solutions in both articles. However, the problem
of the combinatorial description of the higher cohomology of the
irregularity sheaves $\operatorname {Irr}_Y^{(s)}(\M_A (\beta ))$ at
non generic points of $Y$ for general hypergeometric $\D$-modules
seems much more involved since free resolutions are very difficult
to compute.

\begin{center}
\textbf{Acknowledgments}
\end{center}

I am grateful to my advisor Francisco-Jesús Castro-Jiménez for
introducing me to this topic, many helpful conversations and useful
suggestions. I also thank Alicia Dickenstein and Federico N.
Martínez for interesting comments, including Remark \ref{nota-DM}.

\section{Gevrey series and slopes of $\D$-modules}\label{section-general}

Let $Y\subseteq X=\C^n$ be a smooth analytic subvariety and
$\mathcal{I}_Y \subseteq \cO_X $ its defining ideal. The formal
completion of $\cO_X$ along $Y$ is given by

$$\cO_{\widehat{X|Y}}:=\lim_{\stackrel{\longleftarrow}{k}} \cO_X /\mathcal{I}_Y^k .$$

In this section, we can assume that locally $Y= Y_{\tau} =\{x_i =0
:\; i\notin \tau \}$ for $\tau \subseteq \{1,\ldots , n\}$ with
cardinal $r=\dim_{\C } (Y)$. We will denote $x_{\mathcal{\tau}}:=(
x_i : i \in \mathcal{\tau} )$ and $\overline{\tau}=\{ 1, \ldots
,n\}\setminus \tau$. A germ of $\cO_{\widehat{X|Y}}$ at $p \in Y$
has the form

$$f=\sum_{\alpha\in \N^{n-r}}
f_{\alpha}(x_{\tau} ) x_{\overline{\tau}}^{\alpha} \in
\cO_{\widehat{X|Y},p}\subseteq \C \{ x_{\tau} - p_{\tau}\}[[
x_{\overline{\tau}} ]]$$ where $f_{\alpha}(x_{\tau} ) \in
\cO_{Y}(U)$ for certain nonempty relatively open subset $U \subseteq
Y$, $p\in U$. The germs of $\cO_{\widehat{X|Y}}$ are called formal
series along $Y$.

\begin{defi} A formal series $$f=\sum_{\alpha\in \N^{n-r}}
f_{\alpha}(x_{\tau} ) x_{\overline{\tau}}^{\alpha} \in \C \{
x_{\tau} - p_{\tau}\}[[ x_{\overline{\tau}} ]]$$ is said to be
Gevrey of multi-order $\mathbf{s}=(s_i)_{i \notin \tau} \in
\R^{n-r}$ along $Y$ at $p\in Y$ if the series
$$\rho_{\mathbf{s}}^{\tau } (f):= \sum_{\alpha\in \N^{n-r}}
\frac{f_{\alpha}(x_{\tau} )}{\alpha!^{\mathbf{s}-\mathbf{1}}}
x_{\overline{\tau}}^{\alpha}$$ is convergent at $p$. Here we denote
$\alpha!^{\mathbf{s}-\mathbf{1}}=\prod_{i\notin \tau} (\alpha_i
!)^{s_i -1}$.
\end{defi}

\begin{defi}\label{definition-Gevrey}
A formal series $$f=\sum_{\alpha\in \N^{n-r}} f_{\alpha}(x_{\tau} )
x_{\overline{\tau}}^{\alpha} \in \C \{ x_{\tau} - p_{\tau}\}[[
x_{\overline{\tau}} ]]$$ is said to be Gevrey of order $s\in \R$
along $Y$ at $p\in Y$ if the series $$\rho_{s}^{\tau } (f):=
\sum_{\alpha\in \N^{n-r}} \frac{f_{\alpha}(x_{\tau}
)}{(\alpha!)^{s-1}} x_{\overline{\tau}}^{\alpha}$$ is convergent at
$p$.

\vspace{.3cm}

Moreover, if $\rho_{s'}^{\tau} (f)$ is not convergent at $p$ for any
$s' <s$ then $s$ is said to be the Gevrey index of $f$ along $Y$ at
$p$. It is clear that such a series $f$ belongs to
$\cO_{\widehat{X|Y},p}$ and we denote by $\cO_{X|Y}(s)$ the subsheaf
of $\cO_{\widehat{X|Y}}$ whose germs are Gevrey series of order $s$
along $Y$.
\end{defi}

\begin{nota}
Notice that any Gevrey series of multi-order $\mathbf{s}=(s_i
)_{i\notin \tau}$ along $Y$ at $p\in Y$ is also a Gevrey series of
order $s=\max \{ s_i :\; i\notin \tau\}$ along $Y$ at $p$.
\end{nota}

For $s=1$ we have that $\cO_{X|Y}(1)=\cO_{X|Y}$ is the restriction
of $\cO_X$ to $Y$ and by convention $\cO_{X|Y}(+\infty
)=\cO_{\widehat{X|Y}}$.

\vspace{.3cm}

We denote by $\cQ_Y$ the quotient sheaf
$\cO_{\widehat{X|Y}}/\cO_{X|Y}$ and by $\cQ_Y(s)$ its subsheaf
$\cO_{X|Y}(s)/\cO_{X|Y}$ for $1\leq s\leq \infty$.

\begin{defi}{\rm \cite[Definition 6.3.1]{Mebkhout}}\label{irrefularity-s} For each $1\leq s \leq \infty$,
the irregularity complex of order $s$ of $\M$ along $Y$ is
$$\operatorname{Irr}_{Y}^{(s)} (\M ):=\R \cH om_{\cD_X }(\M , \cQ_Y
(s)).$$ The irregularity complex of $\M$ along $Y$ is
$\operatorname{Irr}_Y (\M):= \operatorname{Irr}_{Y}^{(\infty)} (\M
)$.
\end{defi}

Z. Mebkhout proved in \cite[Th. 6.3.3]{Mebkhout} that for any
holonomic $\cD_X$--module $\M$ and any smooth hypersurface $Y\subset
X$ the complex $\operatorname{Irr}_Y^{(s)}(\M)$ is a perverse sheaf
on $Y$ for $1\leq s\leq \infty$. Furthermore, the sheaves
$\operatorname{Irr}_{Y}^{(s)} (\M )$, $s \geq 1$, determine an
increasing filtration of $\operatorname{Irr}_{Y} (\M )$. This
filtration is called the Gevrey filtration of $\operatorname{Irr}_Y
(\M)$ (see \cite[Sec. 6]{Mebkhout}).

\vspace{.3cm}

Assume for the remainder of this section that $Y$ is a smooth
hypersurface.

\begin{defi} {\rm \cite[Sec. 2.4]{Laurent-Mebkhout}}\label{trans-slope} A number $s > 1$ is said to be
an analytic slope of $\M$ along $Y$ at a point $p\in Y$ if $p$
belongs to the analytic closure of the set:

$$\{q\in Y: \; \operatorname{Irr}_{Y}^{(s')}
(\M )_q \neq \operatorname{Irr}_{Y}^{(s)} (\M )_q , \; \forall s'<s
\} .$$
\end{defi}

Let us denote $\cO_{X|Y}(<s):=\cup_{s'<s} \cO_{X|Y}(s')$ for $s\in
\R$. For the sake of completeness we include a proof of the
following result.

\begin{lema}\label{lema-slope}
Let $\mathcal{M}$ be a holonomic $\D$-module such that there exists
a series $f\in \cO_{X|Y}(s)_p $ with Gevrey index $s>1$ whose class
in $$ (\cO_{X|Y}(s) /\cO_{X|Y}(<s))_p $$ is a solution of $\M$, for
all $ p$ in a relatively open set $U \subseteq Y$. Then $s$ is a
slope of $\mathcal{M}$ along $Y$ at any point in the closure of $U$.
\end{lema}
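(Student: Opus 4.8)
The plan is to unwind the definitions and reduce the statement to showing that $\operatorname{Irr}_Y^{(s')}(\M)_p \neq \operatorname{Irr}_Y^{(s)}(\M)_p$ for all $s' < s$ at every point $p \in U$; the statement about the closure of $U$ then follows directly from the definition of analytic slope (Definition \ref{trans-slope}), since $U$ is contained in the set appearing there. So fix $p \in U$. Since $Y$ is a smooth hypersurface we may work locally and assume $Y = \{x_n = 0\}$, so that $\cO_{\widehat{X|Y},p} \subseteq \C\{x_1 - p_1,\ldots,x_{n-1}-p_{n-1}\}[[x_n]]$, and $\cQ_Y(s) = \cO_{X|Y}(s)/\cO_{X|Y}$. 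The hypothesis gives a series $f \in \cO_{X|Y}(s)_p$ with Gevrey index exactly $s$ whose class $\bar f$ in $(\cO_{X|Y}(s)/\cO_{X|Y}(<s))_p$ is annihilated by $\M$, i.e. $\bar f$ defines an element of $\mathcal{H}^0 \R\cH om_{\D_X}(\M, \cO_{X|Y}(s)/\cO_{X|Y}(<s))_p$.

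The key step is to promote $\bar f$ to a nonzero element of $\mathcal{H}^0\operatorname{Irr}_Y^{(s)}(\M)_p$ that dies in $\mathcal{H}^0\operatorname{Irr}_Y^{(s')}(\M)_p$ for every $s' < s$, thereby witnessing the inequality of stalks. First I would observe that $f$, being a solution of $\M$ modulo $\cO_{X|Y}(<s)$, need not itself be a solution modulo $\cO_{X|Y}$; but its image in $\cQ_Y(s) = \cO_{X|Y}(s)/\cO_{X|Y}$ is a solution modulo the subsheaf $\cO_{X|Y}(<s)/\cO_{X|Y} = \cQ_Y(<s)$, and I want to produce an honest cocycle. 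The clean way is to use the short exact sequence of sheaves on $Y$
\[
0 \longrightarrow \cQ_Y(<s) \longrightarrow \cQ_Y(s) \longrightarrow \cQ_Y(s)/\cQ_Y(<s) \longrightarrow 0,
\]
apply $\R\cH om_{\D_X}(\M, -)$, and use the resulting long exact sequence. The class $\bar f \in \mathcal{H}^0\R\cH om_{\D_X}(\M, \cQ_Y(s)/\cQ_Y(<s))_p$ maps under the connecting homomorphism either to $0$ or to a nonzero element of $\mathcal{H}^1\R\cH om_{\D_X}(\M, \cQ_Y(<s))_p$; in the first case $\bar f$ lifts to a genuine element of $\mathcal{H}^0\operatorname{Irr}_Y^{(s)}(\M)_p$, and I must check this lift is not in the image of $\mathcal{H}^0\operatorname{Irr}_Y^{(s')}(\M)_p$ for $s' < s$ — which holds precisely because the Gevrey index of $f$ is exactly $s$, so no representative of its class lies in $\cO_{X|Y}(s')$. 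In the second case, the nonvanishing of the connecting map already gives $\mathcal{H}^1\R\cH om_{\D_X}(\M,\cQ_Y(<s))_p \neq 0$ while $\mathcal{H}^1\R\cH om_{\D_X}(\M,\cQ_Y(s'))_p$ maps to it nontrivially, forcing the stalks $\operatorname{Irr}_Y^{(s')}(\M)_p$ and $\operatorname{Irr}_Y^{(s)}(\M)_p$ to differ as complexes.

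The main obstacle I anticipate is the bookkeeping needed to extract the inequality of complexes from whichever of the two cases occurs, and in particular making precise that "Gevrey index exactly $s$" forbids the relevant lift from coming from a lower-order irregularity sheaf: one needs that if $g \in \cO_{X|Y}(s')_p$ and $f - g \in \cO_{X|Y}_p$ then $f \in \cO_{X|Y}(s')_p$, which is immediate, but one must track this through the $\R\cH om$ and the direct-limit $\cO_{X|Y}(<s) = \bigcup_{s'<s}\cO_{X|Y}(s')$ carefully (using that $\M$ is holonomic, hence $\R\cH om_{\D_X}(\M,-)$ commutes with the relevant filtered colimit since $\M$ admits a finite locally free resolution over $\D_X$). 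Once this is in place, the conclusion for points in $\overline{U}$ is purely formal from Definition \ref{trans-slope}, and the proof is complete.
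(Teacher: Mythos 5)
Your overall plan — use a short exact sequence of Gevrey quotients, study the connecting homomorphism, and exploit the exactness of Gevrey indices to show the lift cannot come from a lower order — is the same skeleton as the paper's argument (the paper runs the connecting map through the presentation $\M=\D/\langle P_1,\ldots,P_m\rangle$ rather than through the sheaf SES, but this is the same obstruction class). However, there is a genuine gap in your ``Case 2,'' and with it you miss the one ingredient the paper's proof actually hinges on.

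When the connecting homomorphism sends $\bar f$ to a nonzero class in $\mathcal{H}^1\R\cH om_{\D_X}(\M,\cQ_Y(<s))_p$, you assert that this ``forces the stalks $\operatorname{Irr}_Y^{(s')}(\M)_p$ and $\operatorname{Irr}_Y^{(s)}(\M)_p$ to differ as complexes.'' This is not justified, and I do not believe it is true in general: the nonvanishing of some $\mathcal{H}^1$ germ tells you nothing by itself about whether $\operatorname{Irr}_Y^{(s')}(\M)_p$ and $\operatorname{Irr}_Y^{(s)}(\M)_p$ are different, which is what Definition \ref{trans-slope} requires for every $s'<s$. In short, you have identified the obstruction to lifting $\bar f$ to an honest element of $\mathcal{H}^0\operatorname{Irr}_Y^{(s)}(\M)_p$, but you have no way to make that obstruction vanish, and no valid fallback when it does not.

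The paper kills this obstruction using Mebkhout's perversity theorem (\cite{Mebkhout}), which you never invoke: since $Y$ is a smooth hypersurface, $\operatorname{Irr}_Y^{(s')}(\M)$ is a \emph{perverse} sheaf on $Y$, so $\mathcal{H}^1(\operatorname{Irr}_Y^{(s')}(\M))$ is supported on a subvariety $S\subseteq Y$ of codimension at least one. Replacing $U$ by $U'=U\setminus\overline{S}$ (which has the same closure, so still suffices for the conclusion about $\overline{U}$) one arranges $\mathcal{H}^1(\operatorname{Irr}_Y^{(s')}(\M))|_{U'}=0$, so the obstruction vanishes there, the lift exists, and your ``Case 1'' argument — correctly carried out by noting that the lift equals $f$ plus something of strictly lower Gevrey index, hence still has Gevrey index exactly $s$ — finishes the job for every $s'<s$. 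Without perversity and the shrink-to-$U'$ step, your proof does not go through.
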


\begin{proof}
Any holonomic $\D$-module is cyclic (see \cite[Proposition
3.1.5]{Bjork}). Thus, we can assume without loss of generality that
$\M=\D /\mathcal{I}$ with $\mathcal{I}$ a sheaf of ideals generated
by some differential operators $P_1 ,\ldots ,P_m \in \D (U)$. Then,
by the assumption, there exists $s_i <s$ such that $P_i (f_p )\in
\cO_{X|Y}(s_i)_p$, $i=1,\ldots ,m$. For $s'=\max \{ s_i \} <s$ we
have that $(\overline{P_i (f)})_{i=1}^m \in (\QQ_Y (s') )^m$
verifies all the left $\D$-relations verified by $(P_i )_{i=1}^m  $.
Thus, we can consider its class in $\mathcal{H}^1
(\operatorname{Irr}_Y^{(s')} (\mathcal{M}))$.

\vspace{.3cm}

Since $Y$ is a smooth hypersurface, $\operatorname{Irr}_Y^{(s')}
(M)$ is a perverse sheaf on $Y$ \cite{Mebkhout}. In particular, the
support $S$ of the sheaf $\mathcal{H}^1 (\operatorname{Irr}_Y^{(s')}
(\mathcal{M}))$ has at most dimension equal to $\dim Y - 1$, so the
relatively open set $U'=U\setminus \overline{S} \subseteq Y$
verifies that $\mathcal{H}^1 (\operatorname{Irr}_Y^{(s')}
(\mathcal{M}))_{|U'} =0$ and its closure is equal to the one of $U$.

\vspace{.3cm}

In particular the class of $(\overline{P_i (f)})_{i=1}^m \in (\QQ_Y
(s') )^m$ in $\mathcal{H}^1 (\operatorname{Irr}_Y^{(s')}
(\mathcal{M}))_{|U'} $ is zero. This implies the existence of
$\overline{h} \in \QQ_Y (s')$ such that $(\overline{P_i
(h)})_{i=1}^m = (\overline{P_i (f)})_{i=1}^m$ in $(Q_Y (s')_p )^m$.
Equivalently, $P_i (f-h)$ is convergent at any point of $U'$ for all
$i=1,\ldots ,m$, and we also have that $f-h$ has Gevrey index $s$
because $f$ has Gevrey index $s$ and $h$ has Gevrey index $s'<s$.

\vspace{.3cm}

The last assertion means that $$\overline{f-h} \in
\mathcal{H}om_{\D}(\mathcal{M},\QQ_{Y}(s))_{|U'}\setminus
\bigcup_{s'<s} \mathcal{H}om_{\D}(\mathcal{M},\QQ_{Y}(s' ))_{|U'} $$
and therefore $s$ is a slope of $\mathcal{M}$ along $Y$ at any point
in the closure of $U$.
\end{proof}

\begin{nota}\label{nota-slope} For all $s\geq 1$,
$\operatorname{Irr}_Y^{(s)} (\M)$ is a constructible sheaf (see
\cite{Mebkhout}). Then there exists a Whitney stratification $\{
Y_{\alpha}(s) \}_{\alpha }$ of $Y$ such that
$\mathcal{H}^{i}(\operatorname{Irr}_Y^{(s)} (\M))_{| Y_{\alpha}(s)}$
are locally constant sheaves. If $Y$ is an irreducible algebraic
hypersurface and $Y_{\alpha}(s)$ are algebraic subvarieties then the
set $Y_{\gamma}(s)=Y\setminus \cup_{\dim Y_{\alpha }(s)<n-1}
Y_{\alpha}(s)$ is a connected stratum (see \cite[Théorème
2.1.]{Grothendieck}). Since $U\cap Y_{\gamma}(s)$ is a relatively
open set in $Y_{\gamma}(s)$ and $s$ is a slope of $\M$ along $Y$ at
any point of $U$, we have that $s$ is a slope of $\mathcal{M}$ along
$Y$ at any point of $Y_{\gamma}(s)$. This implies that $s$ is a
slope of $\mathcal{M}$ along $Y$ at any point of $Y$ by Definition
\ref{trans-slope} because $Y$ is the analytic closure of
$Y_{\gamma}(s)$.
\end{nota}

\section{Gevrey solutions of $\M_A (\beta )$ associated with a simplex}\label{construction-Gevrey}

Let $A=(a_1  \cdots a_n )$ be a full rank matrix with columns $a_j
\in \Z^d$ and $\beta \in \C^d$.

\vspace{.3cm}

For any set $\tau \subseteq \{1,\ldots ,n\}$ let
$\operatorname{conv}(\tau)$ be the convex hull of $\{ a_i : \; i\in
\tau \}\subseteq \R^d$ and let $\Delta_{\tau}$ be the convex hull of
$\{ a_i : \; i\in \tau \}\cup \{\mathbf{0} \}\subseteq \R^d$. We
shall identify $\tau$ with the set $\{a_i : \; i \in \tau \}$ and
with $\operatorname{conv}(\tau )$. We also denote by $A_{\tau }$ the
matrix given by the columns of $A$ indexed by $\tau$.

\vspace{.3cm}

We fix a set $\sigma\subseteq \{1,\ldots ,n\}$ with cardinal $d$ and
$\det (A_{\sigma})\neq 0$ throughout this section. Then
$\Delta_{\sigma }$ is a $d$-simplex and $\sigma$ is a
$(d-1)$-simplex. The normalized volume of $\Delta_{\sigma}$ with
respect to $\Z A$ is $$\operatorname{vol}_{\Z A}(\Delta_{\sigma}
)=\dfrac{d! \operatorname{vol}(\Delta_{\sigma})}{[\Z^d :\Z
A]}=\dfrac{|\det (A_{\sigma})|}{[\Z^d :\Z A]}$$ where
$\operatorname{vol}(\Delta_{\sigma})$ denotes the Euclidean volume
of $\Delta_{\sigma}$. The aims of this section are: 1) to explicitly
construct $\operatorname{vol}_{\Z A}(\Delta_{\sigma} )$ linearly
independent formal solutions of $\M_A (\beta )$ along the subspace
$Y_{\sigma}=\{x_i=0 : \; i\notin \sigma \}$ at any point of
$Y_{\sigma}\cap \{x_j \neq 0: \; j \in \sigma \}$ and 2) to prove
that these series are Gevrey series along $Y_{\sigma}$ of
multi-order $(s_i )_{i\notin \sigma} $ with $s_i
=|A_{\sigma}^{-1}a_i |$.

\vspace{.3cm}

We reorder the variables in order to have $\sigma=\{1,\dots ,d\}$
for simplicity. Then a basis of $\ker (A) =\{u\in \mathbb{Q}^n : \;
Au=0 \}$ is given by the columns of the matrix:

$$B_{\sigma}=\left( \begin{array}{c}
                      -A_{\sigma}^{-1}A_{\overline{\sigma}} \\
                      I_{n-d}
                    \end{array}
 \right)=\left(\begin{array}{cccc}
               -A_{\sigma}^{-1} a_{d+1} & -A_{\sigma}^{-1}a_{d+2} & \cdots & -A_{\sigma}^{-1}a_{n} \\
               1 & 0 &  & 0 \\
               0 & 1 &  & 0 \\
          \vdots &   & \ddots & \vdots \\
               0 & 0 &   & 1
             \end{array}\right)$$ For $v\in \C^n$ with $A v =\beta$ the $\Gamma$--series defined in
\cite{GKZ}:

$$\varphi_v : =\sum_{u\in L_A} \frac{1}{\Gamma (v+u+1)} x^{v+u}$$ is
formally annihilated by the differential operators
(\ref{Toric-operators}) and (\ref{Euler-operators}). Here $\Gamma$
is the Euler Gamma function and $L_A :=\ker (A)\cap \Z^n$. Notice
that $\varphi_v$ is zero if and only if $(v + L_A)\cap (\C \setminus
\Z_{<0})^{n}=\emptyset$. In contrast, the series $\varphi_v$ does
not define a formal power series at any point if $v\in \C^n$ is very
generic.

\vspace{.3cm}

Observe that $v:=(A_{\sigma}^{-1}\beta , \mathbf{0}) \in \C^n$
satisfies $Av=\beta$ and so the vectors
$$v^{\mathbf{k}}=(A_{\sigma}^{-1}(\beta -
\sum_{i\notin \sigma} k_i a_i),\mathbf{k})$$ where $ \mathbf{k}=(k_i
)_{i\notin \sigma }\in \N^{n-d} $. Hence, according to Lemma 1 in
Section 1.1 of \cite{GKZ}, we have that the formal series along
$Y_{\sigma}:=\{x_i =0 :\; i\notin \sigma \}$ at any point of
$Y_{\sigma}\cap \{x_j \neq 0:\; j\in \sigma \} $:

$$\varphi_{v^{\mathbf{k}}}=x_{\sigma}^{A_{\sigma}^{-1}\beta}\sum_{\mathbf{k}+
\mathbf{m}\in \Lambda_{\mathbf{k}}} \frac{x_{\sigma}^{-
A_{\sigma}^{-1}(\sum_{i\notin \sigma} (k_i +m_i ) a_i)}
x_{\overline{\sigma}}^{\mathbf{k} +\mathbf{m}}}{\Gamma
(A_{\sigma}^{-1}(\beta -\sum_{i\notin \sigma} (k_i +m_i )
a_i)+\mathbf{1})(\mathbf{k}+\mathbf{m})!}
$$ where $$
\Lambda_{\mathbf{k}} :=\{\mathbf{k}+\mathbf{m}=(k_i + m_i )_{i\in
\overline{\sigma}}\in \N^{n-d}: \; \sum_{i\in \overline{\sigma}}
a_{i} m_i \in \Z A_{\sigma} \}$$ is annihilated by the operators
(\ref{Toric-operators}) and (\ref{Euler-operators}). Notice that
$\varphi_{v^{\mathbf{k}}}$ is zero if and only if for all
$\mathbf{m}\in \Lambda_{\mathbf{k}} $, $A_{\sigma}^{-1}(\beta -
\sum_{i\notin \sigma } (k_i + m_i ) a_i)$ has at least one negative
integer coordinate.

\vspace{.3cm}

Let us consider the lattice $\Z \sigma = \Z A_{\sigma} =\sum_{i\in
\sigma} \Z a_i $ contained in $\Z A$.

\begin{lema}\label{equiv-lattice}
The following statements are equivalent for all $\mathbf{k},
\mathbf{k}' \in \Z^{n-d}$:

\begin{enumerate}
\item[1)] $v^{\mathbf{k}}-v^{\mathbf{k}'} \in \Z^n$

\item[2)] $[A_{\overline{\sigma}}\mathbf{k}]=[
A_{\overline{\sigma}}\mathbf{k}']$ in $\Z A /\Z \sigma $.

\item[3)] $\Lambda_{\mathbf{k}} =\Lambda_{\mathbf{k}'}$.

\end{enumerate}
\end{lema}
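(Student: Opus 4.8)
The plan is to prove the three implications $1) \Rightarrow 2) \Rightarrow 3) \Rightarrow 1)$, unwinding the definitions at each step.

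First I would handle $1) \Leftrightarrow 2)$. Recall that $v^{\mathbf{k}} = (A_\sigma^{-1}(\beta - A_{\overline{\sigma}}\mathbf{k}), \mathbf{k})$ where I write $A_{\overline{\sigma}}\mathbf{k} = \sum_{i\notin\sigma} k_i a_i$. So $v^{\mathbf{k}} - v^{\mathbf{k}'} = (A_\sigma^{-1}A_{\overline{\sigma}}(\mathbf{k}'-\mathbf{k}), \mathbf{k}-\mathbf{k}')$. The last $n-d$ coordinates are automatically in $\Z^{n-d}$ since $\mathbf{k},\mathbf{k}'\in\Z^{n-d}$, so the condition $v^{\mathbf{k}}-v^{\mathbf{k}'}\in\Z^n$ is equivalent to $A_\sigma^{-1}A_{\overline{\sigma}}(\mathbf{k}'-\mathbf{k})\in\Z^d$. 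Now $A_\sigma^{-1} w \in \Z^d$ if and only if $w \in A_\sigma\Z^d = \Z\sigma$; applying this with $w = A_{\overline{\sigma}}(\mathbf{k}'-\mathbf{k})$ gives that $1)$ holds iff $A_{\overline{\sigma}}(\mathbf{k}-\mathbf{k}')\in\Z\sigma$, which is exactly the statement that $[A_{\overline{\sigma}}\mathbf{k}] = [A_{\overline{\sigma}}\mathbf{k}']$ in $\Z A/\Z\sigma$ (note $A_{\overline{\sigma}}\mathbf{k}\in\Z A$ since $A_{\overline{\sigma}}$ has columns among those of $A$), i.e. $2)$.

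Next I would prove $2) \Leftrightarrow 3)$. Recall $\Lambda_{\mathbf{k}} = \{\mathbf{k}+\mathbf{m} : \mathbf{m}\in\N^{n-d},\ \mathbf{k}+\mathbf{m}\in\N^{n-d},\ A_{\overline{\sigma}}\mathbf{m}\in\Z\sigma\}$; more precisely $\Lambda_{\mathbf{k}}$ is the set of $\mathbf{p}\in\N^{n-d}$ with $\mathbf{p}\geq\mathbf{k}$ componentwise and $A_{\overline{\sigma}}(\mathbf{p}-\mathbf{k})\in\Z\sigma$. Assuming $2)$, so $A_{\overline{\sigma}}(\mathbf{k}-\mathbf{k}')\in\Z\sigma$: for $\mathbf{p}\in\Lambda_{\mathbf{k}}$ we have $A_{\overline{\sigma}}(\mathbf{p}-\mathbf{k}')=A_{\overline{\sigma}}(\mathbf{p}-\mathbf{k})+A_{\overline{\sigma}}(\mathbf{k}-\mathbf{k}')\in\Z\sigma$; but one also needs $\mathbf{p}\geq\mathbf{k}'$ componentwise, which is \emph{not} automatic — so here the subtlety is that $\Lambda_{\mathbf{k}}$ as defined in the paper really does range over all $\mathbf{m}\in\N^{n-d}$ without the constraint $\mathbf{k}+\mathbf{m}\geq\mathbf{0}$ being restrictive beyond $\mathbf{m}\geq\mathbf{0}$, and I should check the intended reading: $\Lambda_{\mathbf{k}}$ is indexed by $\mathbf{m}\in\N^{n-d}$, so its elements $\mathbf{k}+\mathbf{m}$ need not be nonnegative when $\mathbf{k}$ has negative entries. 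With that reading, $\Lambda_{\mathbf{k}} = \mathbf{k} + \{\mathbf{m}\in\N^{n-d} : A_{\overline{\sigma}}\mathbf{m}\in\Z\sigma\}$, i.e. $\mathbf{k}$ plus a fixed sub-semigroup $S:=\{\mathbf{m}\in\N^{n-d}:A_{\overline{\sigma}}\mathbf{m}\in\Z\sigma\}$ of $\N^{n-d}$. Then $\Lambda_{\mathbf{k}}=\Lambda_{\mathbf{k}'}$ iff $\mathbf{k}+S = \mathbf{k}'+S$. Since $\mathbf{0}\in S$, $\mathbf{k}+S=\mathbf{k}'+S$ forces $\mathbf{k}-\mathbf{k}'\in S - S$, and conversely if $\mathbf{k}-\mathbf{k}'=\mathbf{m}_1-\mathbf{m}_2$ with $\mathbf{m}_i\in S$ then $\mathbf{k}+S=\mathbf{k}'+\mathbf{m}_1-\mathbf{m}_2+S\subseteq\mathbf{k}'+S$ (using $S+S\subseteq S$ and that we can always add $\mathbf{m}_2$... ) — this last inclusion needs care because $S$ is a semigroup not a group. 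I expect the cleanest route is: $S-S$ is the subgroup $\{\mathbf{w}\in\Z^{n-d} : A_{\overline{\sigma}}\mathbf{w}\in\Z\sigma\}$ (since $S$ contains $\mathbf{0}$ and generates this group, as $S$ spans $\N^{n-d}$-directions: e.g. the standard basis vectors $e_i$ multiplied by $[\Z A:\Z\sigma]$ lie in $S$), and then $\mathbf{k}+S=\mathbf{k}'+S$ iff $\mathbf{k}-\mathbf{k}'\in S-S$ iff $A_{\overline{\sigma}}(\mathbf{k}-\mathbf{k}')\in\Z\sigma$, which is $2)$.

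The main obstacle will be the $3)\Rightarrow 2)$ (equivalently $\mathbf{k}+S=\mathbf{k}'+S\Rightarrow\mathbf{k}-\mathbf{k}'\in S-S$) direction handled cleanly: one must argue that the translated semigroups coincide only when the translation vector lies in the difference group, which requires knowing $S$ is "large enough" (cofinal in every coordinate direction). I would establish this by noting that for each $i\notin\sigma$, some positive multiple $N_i e_i$ of the standard basis vector satisfies $A_{\overline{\sigma}}(N_i e_i)=N_i a_i\in\Z\sigma$ for $N_i = [\Z A : \Z\sigma]$ (or any multiple with $N_i a_i\in\Z\sigma$), so $N_i e_i\in S$; hence given $\mathbf{p}\in\mathbf{k}+S$ we can add a large element of $S$ to move into $\mathbf{k}'+S$, and symmetrically, forcing $\mathbf{k}-\mathbf{k}'$ into the group generated by $S$. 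Once that is in place, all three equivalences reduce to the single linear-algebra fact "$A_\sigma^{-1}w\in\Z^d \iff w\in\Z\sigma$" together with bookkeeping, and the proof is short.
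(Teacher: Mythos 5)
Your $1)\Leftrightarrow 2)$ argument is correct and complete: $v^{\mathbf{k}}-v^{\mathbf{k}'}$ has automatically integral last $n-d$ coordinates, and the first $d$ are integral iff $A_{\overline{\sigma}}(\mathbf{k}-\mathbf{k}')\in A_\sigma\Z^d=\Z\sigma$, which is precisely $2)$.

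The treatment of $2)\Leftrightarrow 3)$, however, rests on a misreading of $\Lambda_{\mathbf{k}}$ that makes the statement you are trying to prove false. You settle on the reading $\Lambda_{\mathbf{k}}=\mathbf{k}+S$ with $S=\{\mathbf{m}\in\N^{n-d}:A_{\overline{\sigma}}\mathbf{m}\in\Z\sigma\}$. With that reading, since $\mathbf{0}\in S\subseteq\N^{n-d}$, the equality $\mathbf{k}+S=\mathbf{k}'+S$ gives $\mathbf{k}\in\mathbf{k}'+S$ and $\mathbf{k}'\in\mathbf{k}+S$, hence $\mathbf{k}-\mathbf{k}'\geq 0$ and $\mathbf{k}'-\mathbf{k}\geq 0$, so $\mathbf{k}=\mathbf{k}'$. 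Thus under your reading $3)$ is strictly stronger than $2)$, the lemma fails, and the proposed equivalence ``$\mathbf{k}+S=\mathbf{k}'+S$ iff $\mathbf{k}-\mathbf{k}'\in S-S$'' is simply false; no cofinality argument about $S$ can repair it.

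The intended reading, which is the one forced by the $\Gamma$-series (the sum in $\varphi_v$ runs over \emph{all} $u\in L_A$, with the poles of $\Gamma$ at nonpositive integers annihilating every term in which some last coordinate of $v^{\mathbf{k}}+u$ is negative), is that $\mathbf{m}$ runs over $\Z^{n-d}$ subject only to $\mathbf{k}+\mathbf{m}\in\N^{n-d}$, so that
$$\Lambda_{\mathbf{k}}=\{\mathbf{p}\in\N^{n-d}:A_{\overline{\sigma}}(\mathbf{p}-\mathbf{k})\in\Z\sigma\}=\{\mathbf{p}\in\N^{n-d}:[A_{\overline{\sigma}}\mathbf{p}]=[A_{\overline{\sigma}}\mathbf{k}]\ \text{in}\ \Z A/\Z\sigma\}.$$
This reading is confirmed by the paper's own proof of Lemma~\ref{cardinal-lattice}, which uses $\Lambda_{\mathbf{k}}=\Lambda_{\mathbf{k}+\alpha\mathbf{c}}$ for positive $\alpha$ — false under your translate reading. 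With the correct reading, $2)\Rightarrow 3)$ is immediate (the set depends only on the class $[A_{\overline{\sigma}}\mathbf{k}]$), and $3)\Rightarrow 2)$ follows because $\Lambda_{\mathbf{k}}\neq\emptyset$ (add a multiple of $\mathbf{c}=|\det A_\sigma|\cdot(1,\ldots,1)$, which lies in the kernel of $A_{\overline{\sigma}}$ mod $\Z\sigma$, to reach $\N^{n-d}$), so a nonempty $\Lambda_{\mathbf{k}}$ recovers the class. Your linear-algebra step and the observation that suitable multiples of the $e_i$ lie in the relevant kernel are both sound; the gap is entirely in the choice of definition you worked with.
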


\begin{lema}\label{cardinal-lattice}
We have the equality: $$\{\Lambda_{\mathbf{k}} :\; \mathbf{k} \in
\Z^{n-d} \} =\{\Lambda_{\mathbf{k}} :\; \mathbf{k} \in \N^{n-d} \}
$$ and the cardinal of this set is $[\Z A : \Z \sigma ]=\operatorname{vol}_{\Z A}(\Delta_{\sigma })$.
\end{lema}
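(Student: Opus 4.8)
The plan is to use Lemma \ref{equiv-lattice}, which tells us that the assignment $\mathbf{k} \mapsto \Lambda_{\mathbf{k}}$ factors through the class $[A_{\overline{\sigma}}\mathbf{k}] \in \Z A / \Z\sigma$, and in fact induces an injection of the set of distinct $\Lambda_{\mathbf{k}}$'s into $\Z A/\Z\sigma$. So the whole statement reduces to two things: first, that every class in $\Z A/\Z\sigma$ is hit, i.e. the map $\mathbf{k} \mapsto [A_{\overline{\sigma}}\mathbf{k}]$ from $\Z^{n-d}$ (resp. from $\N^{n-d}$) onto $\Z A/\Z\sigma$ is surjective; and second, that $[\Z A:\Z\sigma] = \operatorname{vol}_{\Z A}(\Delta_\sigma)$.

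For the surjectivity onto $\Z A/\Z\sigma$ from $\Z^{n-d}$: by definition $\Z A = \sum_{i=1}^n \Z a_i = \Z\sigma + \sum_{i\notin\sigma}\Z a_i$, and $\sum_{i\notin\sigma}\Z a_i$ is precisely $\{A_{\overline{\sigma}}\mathbf{k} : \mathbf{k}\in\Z^{n-d}\}$, so the classes $[A_{\overline{\sigma}}\mathbf{k}]$ already exhaust $\Z A/\Z\sigma$. To get the same with $\mathbf{k}\in\N^{n-d}$, note that for each $i\notin\sigma$ one can shift $k_i$ by a large positive multiple of $[\Z A:\Z\sigma]$ without changing the class $[k_i a_i]$ modulo $\Z\sigma$ — more carefully, since $[\Z A:\Z\sigma]\cdot a_i \in \Z\sigma$ for every $i$ (the quotient group is finite of that order, so multiplying any element by the order lands in the subgroup), adding $[\Z A:\Z\sigma]$ to a coordinate $k_i$ leaves $[A_{\overline{\sigma}}\mathbf{k}]$ unchanged. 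Hence given any $\mathbf{k}\in\Z^{n-d}$ we may add a suitable large vector in $([\Z A:\Z\sigma]\,\N)^{n-d}$ to make all coordinates nonnegative while preserving the class, and therefore (by Lemma \ref{equiv-lattice}, 2) $\Leftrightarrow$ 3)) preserving $\Lambda_{\mathbf{k}}$. This proves the displayed set equality and also shows the common set is in bijection with $\Z A/\Z\sigma$, of cardinality $[\Z A:\Z\sigma]$.

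It remains to identify $[\Z A:\Z\sigma]$ with $\operatorname{vol}_{\Z A}(\Delta_\sigma)$. Using the formula recorded just before the statement, $\operatorname{vol}_{\Z A}(\Delta_\sigma) = |\det(A_\sigma)|/[\Z^d:\Z A]$. On the other hand $[\Z^d : \Z\sigma] = |\det(A_\sigma)|$ since $A_\sigma$ is a $d\times d$ integer matrix with nonzero determinant whose columns generate $\Z\sigma$, and by the tower law $[\Z^d:\Z\sigma] = [\Z^d:\Z A]\cdot[\Z A:\Z\sigma]$ (here $\Z\sigma\subseteq\Z A\subseteq\Z^d$ and all indices are finite because $A_\sigma$ is invertible). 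Combining, $[\Z A:\Z\sigma] = |\det(A_\sigma)|/[\Z^d:\Z A] = \operatorname{vol}_{\Z A}(\Delta_\sigma)$, as desired.

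The only mildly delicate point is the $\N^{n-d}$ versus $\Z^{n-d}$ business — making sure the positive shift one performs genuinely fixes the lattice class and hence $\Lambda_{\mathbf{k}}$; everything else is the tower law for subgroup indices and the standard fact $[\Z^d:\Z A_\sigma] = |\det A_\sigma|$. I expect this argument to be short, with the bookkeeping around Lemma \ref{equiv-lattice} being the part to state carefully rather than a real obstacle.
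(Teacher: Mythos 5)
Your proposal is correct and follows essentially the same path as the paper's proof: reduce to counting classes in $\Z A/\Z\sigma$ via the equivalence $2)\Leftrightarrow 3)$ of Lemma \ref{equiv-lattice}, handle the $\N^{n-d}$ versus $\Z^{n-d}$ issue by adding a suitable positive multiple of a vector whose image under $A_{\overline{\sigma}}$ lies in $\Z\sigma$, and finish with the tower law $[\Z^d:\Z\sigma]=[\Z^d:\Z A][\Z A:\Z\sigma]$ and $[\Z^d:\Z\sigma]=|\det A_\sigma|$. The only cosmetic difference is the choice of shift: the paper uses $\mathbf{c}=|\det A_\sigma|\cdot(1,\ldots,1)$ (annihilating $\Z^d/\Z\sigma$), whereas you use the smaller multiple $[\Z A:\Z\sigma]$ (annihilating $\Z A/\Z\sigma$, which suffices since each $a_i\in\Z A$); both work equally well.
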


\begin{proof}
The equality is clear because $ A_{\overline{\sigma}}\mathbf{c} \in
\Z \sigma$ for $\mathbf{c}=|\det(A_{\sigma} )| \cdot (1,\ldots ,1)
\in (\N^{\ast})^{n-d}$ and then for any $\mathbf{k} \in \Z^{n-d}$
there exists $\alpha \in \N$ such that $\mathbf{k}+ \alpha
\mathbf{c} \in \N^{n-d}$ and
$\Lambda_{\mathbf{k}}=\Lambda_{\mathbf{k}+\alpha \mathbf{c} }$.

\vspace{.3cm}

$\forall \overline{\lambda} \in \Z A /\Z \sigma$ there exists
$\mathbf{k}\in \Z^{n-d}$ with
$\overline{A_{\overline{\sigma}}\mathbf{k}} = \overline{\lambda} \in
\Z A /\Z \sigma$. Then by the equivalence of 2) and 3) in Lemma
\ref{equiv-lattice} we have that $\{ \Lambda_{\mathbf{k}} :\;
\mathbf{k} \in \Z^{n-d} \}$ has the same cardinal as the finite
group $\Z A / \Z \sigma$.
\end{proof}

\begin{nota}\label{disjoint-supports}
Recall that the support of a series $\sum_{v} c_v x^v$ is the set $$
\{v\in \C^n :\; c_v \neq 0\}.$$ Then, for all $\mathbf{k},
\mathbf{k}' \in \N^{n-d}$ such that $v^{\mathbf{k}}-v^{\mathbf{k}'}
\in \Z^n$ we have that
$\varphi_{v^{\mathbf{k}}}=\varphi_{v^{\mathbf{k}'}}$ and in other
case we have that $\varphi_{v^{\mathbf{k}}}$,
$\varphi_{v^{\mathbf{k}'}}$ have disjoint supports.
\end{nota}

\begin{nota}\label{partition}
One may consider $\mathbf{k}(1) , \ldots , \mathbf{k}(r) \in
\N^{n-d}$ such that $$\Z A /\Z \sigma = \{ [A_{\overline{\sigma}}
\mathbf{k}(i) ]: \; i=1,\ldots , r \}$$ with $r=[\Z A : \Z \sigma
]$. Then the set in Lemma \ref{cardinal-lattice} is equal to $\{\
\Lambda_{ \mathbf{k}(i) }: \; i=1,\ldots ,r \} $ and it determines a
partition of $\N^{n-d}$, i.e.,

\begin{enumerate}
\item[1)] $\Lambda_{\mathbf{k}(i)} \cap
\Lambda_{\mathbf{k}(j)}=\emptyset$ if $i\neq j$.

\item[2)] $ \cup_{i=1}^r \Lambda_{\mathbf{k}(i)}
=\N^{n-d}$.
\end{enumerate}
\end{nota}

We have described $\operatorname{vol}_{\Z A }(\Delta_{\sigma })$
formal solutions of $\M_A (\beta )$ along $Y_{\sigma}$ associated
with a simplex $\sigma$ having pairwise disjoint supports. Thus,
they are linearly independent if none of them is zero.

\begin{defi}\label{definition-generic}
$\beta \in \C^d$ is said to be generic if it runs in a Zariski open set.
$\beta$ is said to be very generic if it runs in a countable
intersection of Zariski open sets.
\end{defi}

\begin{nota}
For generic $\beta $ we have that $\beta$ is non-rank-jumping. For
very generic $\beta $ we have that $v^{\mathbf{k}}$ does not have
any negative integer coordinate for all $\mathbf{k} \in \N^{n-d}$
and that $\beta $ is generic. In particular, if $\beta$ is very
generic we have that $ \varphi_{v^{\mathbf{k}}} \neq 0$, $\forall
\mathbf{k}$. More precisely, non generic parameter vectors $\beta$
(resp. non very generic) lie in the complement of a hyperplane
arrangement (resp. a countable union of hyperplane arrangements)
that depends on $A$.
\end{nota}

These $\Gamma $--series are handled in \cite{SST} in such a way that
they are not zero for any $\beta \in \C^d$:

$$\phi_v :=\sum_{u\in N_v} \frac{[v]_{u_{-}}}{[v+u]_{u_{+}}}
x^{v+u}$$ where $v\in \C^n$ verifies $A v=\beta$ and $N_v = \{u\in
L_A : \; \operatorname{nsupp}(v+u)=\operatorname{nsupp}(v) \}$. Here
$\operatorname{nsupp}(w):=\{i\in \{1,\ldots , n\}: \; w_i \in \Z_{<0
}\}$ for $w\in \C^n$, $[v]_{u}=\prod_{i} [v_i ]_{u_i} $ and $[v_i
]_{u_i}=\prod_{j=1}^{u_i } (v_i -j +1)$ is the Pochhammer symbol for
$v_i \in \C$, $u_i \in \N$. When $v\in (\C \setminus \Z_{<0})^{n}$
we have:

$$\phi_v = \Gamma (v+1) \varphi_v .$$ Since $A v=\beta$ the series
$\phi_v$ is annihilated by the operators (\ref{Euler-operators}). It
is annihilated by the toric ideal $I_A$ if and only if the negative
support of $v$ is minimal, i.e., $\nexists u \in L_A :=\ker (A)\cap
\Z^n $ with $\operatorname{nsupp}(v+u)\subsetneq
\operatorname{nsupp}(v)$ (see \cite{SST}, Section 3.4.).

\vspace{.3cm}

Observe that any $u \in L_A$ has the form $(-\sum_{j \notin \sigma }
r_j A_{\sigma}^{-1}a_j , \mathbf{r})$ with $ \mathbf{r}=(r_j )_{j
\notin \sigma } \in \Z^{n-d}$ such that $A_{\overline{\sigma}
}\mathbf{r}= \sum_{j\notin \sigma } r_j a_j \in \Z \sigma$. Then we
can choose $\mathbf{k} \in \N^{n-d}$ such that $v^{\mathbf{k}}$ has
minimal negative support because we do not change the class of
$\sum_{j\notin \sigma} k_j a_j$ modulo $\Z A_{\sigma}$ when
replacing $\mathbf{k}$ by $\mathbf{k}+\mathbf{r}\in \N^{n-d}$. Then
the new series $\phi_{\sigma}^{\mathbf{k}} :=
\phi_{v^{\mathbf{k}}}\neq 0$ has the form:

$$\phi_{\sigma}^{\mathbf{k}} = \sum_{\mathbf{k}+ \mathbf{m}\in
S_{\mathbf{k}}}  \frac{[v^{\mathbf{k}}
]_{u(\mathbf{m})_{-}}}{[v^{\mathbf{k}}
+u(\mathbf{m})]_{u(\mathbf{m})_{+}}} \; x^{v^{\mathbf{k}}
+u(\mathbf{m})}$$ where
$$S_{\mathbf{k}} :=\{ \mathbf{k}+\mathbf{m}\in \Lambda_{\mathbf{k}}:
\;
\operatorname{nsupp}(v^{\mathbf{k}+\mathbf{m}})=\operatorname{nsupp}(v^{\mathbf{k}})
\}\subseteq \Lambda_{\mathbf{k}} $$ and
$u(\mathbf{m})=(-\sum_{i\notin \sigma}m_i A_{\sigma}^{-1} a_i ,
\mathbf{m})$ for $\mathbf{m}=(m_i )_{i\notin \sigma }\in \Z^{n-d}$.
It is clear that $\mathbf{k}+\mathbf{m} \in S_{\mathbf{k}}$ if and
only if $v^{\mathbf{k}+\mathbf{m}}\in v^{\mathbf{k}} +
N_{v^{\mathbf{k}}}$.

\vspace{.3cm}

\begin{nota}\label{disjoint-supports-2}
Using that $ S_{\mathbf{k}} \subseteq \Lambda_{\mathbf{k}}, \;
\forall \mathbf{k} \in \N^{n-d}$, and Remark \ref{partition} we have
that two series in $\{\phi_{\sigma}^{\mathbf{k}}: \; \mathbf{k}\in
\N^{n-d} \}$ are either equal up to multiplication by a nonzero
scalar or they have disjoint supports. Thus, the set
$\{\phi_{\sigma}^{\mathbf{k}}: \; \mathbf{k}\in \N^{n-d} \}$ has
$\operatorname{vol}_{\Z A }(\Delta_{\sigma} )$ linearly independent
formal series solutions of $\M_A (\beta )$ along $Y_{\sigma}$ at any
point of $Y_{\sigma}\cap \{x_j \neq 0: \; j\in \sigma \}$ for all
$\beta \in \C^d$.
\end{nota}

\begin{ejem}\label{ejem1}
Let $A=(a_1 \; a_2 \; a_3 )\in \Z^{2\times 3}$ be the matrix with
columns: $$a_1 =\left(\begin{array}{c}
                       1 \\
                       0
                     \end{array}
\right)\; \; a_2 =\left(\begin{array}{c}
                       0 \\
                       2
                     \end{array}
\right)\; \; a_3 =\left(\begin{array}{c}
                       3 \\
                       1
                     \end{array}\right)$$ The kernel of $A$ is
                     generated by $u=(6,1,-2)$ and so $L_A =\Z u$. Then the hypergeometric system associated with $A$ and $\beta \in
\C^2$ is generated by the differential operators:
$$\Box_{u}=\partial_1^6 \partial_2 -\partial_3^2,\; E_1 -\beta_1 =x_1
\partial_1 + 3 x_3 \partial_3 -\beta_1, \;E_2 -\beta_2 = 2 x_2
\partial_2 +x_3 \partial_3 -\beta_2.$$ In this example $\Z A=\Z^2$,
$A$ is pointed and $\sigma =\{ 1,2\}$ is a simplex with normalized
volume $\operatorname{vol}_{\Z A}(\Delta_{\sigma})=|\det (A_{\sigma
})|=2$ (see Figure 1).

\setlength{\unitlength}{8mm}
$$\begin{picture}(-2.5,6)(10,2)
\put(5,3){\vector(0,1){5}}
\put(2,5){\vector(1,0){6}}
\put(6,5){\line(-1,2){1}}
\put(6,5){\makebox(0,0){$\bullet$}}
\put(5,7){\makebox(0,0){$\bullet$}}
\put(8,6){\makebox(0,0){$\bullet$}}
\put(6,5){\makebox(0.8,-0.4){$a_1$}}
\put(5,7){\makebox(0.9,0){$a_2$}}
\put(8,6){\makebox(0.8,-0.4){$a_3$}} \put(5,2){\makebox(0,0){Figure
1}}
\end{picture}$$ Two convenient vectors associated with $\sigma$ are $$v^{0}=(\beta_1
,\beta_2 /2 ,0) \mbox{ and } v^{1}=(\beta_1 -3 ,(\beta_2 -1) /2 ,
1).$$ The associated series:

$$\phi_{v^{0}}=\sum_{m\geq 0} \frac{[\beta_1 ]_{6m} [\beta_2 /2 ]_m }{(2m)!} x_1^{\beta_1 -6 m} x_2^{\beta_2 /2  -m} x_3^{2m} $$
and $$\phi_{v^{1}}=\sum_{m\geq 0} \frac{[\beta_1 -3]_{6m} [(\beta_2
-1) /2 ]_m }{(2m + 1)!} x_1^{\beta_1 - 3 -6 m} x_2^{(\beta_2 -1) /2
-m} x_3^{1+2m}$$ are formal series  along $Y_{\sigma}=\{x_3 =0\}$ at
any point of $Y_{\sigma }\cap \{x_1 x_2 \neq 0 \} $ that are
annihilated by the Euler operators $E_1 -\beta_1$, $E_2 -\beta_2$
because $A v^{k}=\beta$ and by the toric operator $\Box_{u}$ since
$v^{k}$ has minimal negative support for all $\beta \in \C^2$ for
$k=0,1$.
\end{ejem}

The following Lemma is very related with Lemma 1 in
\cite{Ohara-Takayama} (see also \cite[Proposition 1, Section
1.1.]{GKZ} and \cite[ Proposition 5]{PST}).

\begin{lema}\label{equiv-Gevrey}
Assume that $\{b_i \}_{i={d+1}}^{n}$ is a set of vectors in
$\Q^{d}\times \N^{n-d}$, $\mathbf{k}\in \Z^{n-d}$. Let us denote
$u(\mathbf{m})=\sum_{i=d+1}^n m_i b_i$ and consider a set
$D_{\mathbf{k}} \subseteq \{\mathbf{k}+\mathbf{m} \in \N^{n-d}: \;
u(\mathbf{m})\in \Z^n \}$ and a vector $v\in \C^n$ such that
$\operatorname{nsupp}(v+ u(\mathbf{m}))=\operatorname{nsupp}(v)$ for
any $\mathbf{m} \in D_{\mathbf{k}} - \mathbf{k}$. Then for all
$\mathbf{s}\in \R^{n-d}$ the following statements are equivalent:

\begin{enumerate}
\item[1)] $\displaystyle \sum_{\mathbf{k}+\mathbf{m} \in D_{\mathbf{k}}} \dfrac{[v]_{u(\mathbf{m})_{-}}}{[v+u(\mathbf{m})]_{u(\mathbf{m})_{+}}}
y^{\mathbf{k}+\mathbf{m}}$ is Gevrey of multi-order $\mathbf{s}$
along $y=0$.

\item[2)] $\displaystyle \sum_{\mathbf{k}+\mathbf{m} \in D_{\mathbf{k}} } \dfrac{u(\mathbf{m})_{-}!}{u(\mathbf{m})_{+}!}
y^{\mathbf{k}+\mathbf{m}}$ is Gevrey of multi-order $\mathbf{s}$
along $y=0$.

\item[3)] $\displaystyle \sum_{\mathbf{k}+\mathbf{m} \in D_{\mathbf{k}} } \prod_{j=d+1}^n (k_j + m_j )!^{-|b_j|}
y^{\mathbf{k}+\mathbf{m}}$ is Gevrey of multi-order $\mathbf{s}$
along $y=0$.
\end{enumerate}

In particular, for $\mathbf{s}=(s_{d+1},\ldots ,s_n )$ with $s_i =
1- |b_i|$, $i=d+1.\ldots,n$, 1),2) and 3) are satisfied. Moreover,
1), 2) and 3) are also equivalent if we write {\em order } $s$
instead of {\em multi-order} $\mathbf{s}$ and all these series are
Gevrey of order $s=\displaystyle \max_{i} \{ 1-|b_i | \}$.
\end{lema}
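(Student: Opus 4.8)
The proof of Lemma~\ref{equiv-Gevrey} splits into two clearly separated pieces: first establish the equivalence $1)\Leftrightarrow 2)\Leftrightarrow 3)$ for a \emph{fixed} multi-order $\mathbf{s}$, and then deduce the statement for a single order $s$ by taking maxima. I would begin by reducing $1)\Leftrightarrow 2)$ to a comparison of the two coefficient systems. The coefficient in $1)$ is $[v]_{u(\mathbf{m})_-}/[v+u(\mathbf{m})]_{u(\mathbf{m})_+}$, and the coefficient in $2)$ is $u(\mathbf{m})_-!/u(\mathbf{m})_+!$. Since $[v_i]_{w}=\prod_{j=0}^{w-1}(v_i-j)$ is, as $w\to\infty$, comparable to $w!$ up to a factor that grows only polynomially in $w$ (more precisely, $[v_i]_w/w! = \binom{v_i}{w}$ up to sign, which is bounded by $C^w$ for any fixed $v_i$ and decays no faster than polynomially when $v_i\notin\Z_{\geq 0}$; the hypothesis $\operatorname{nsupp}(v+u(\mathbf{m}))=\operatorname{nsupp}(v)$ guarantees none of these Pochhammer symbols vanishes and the negative-support entries contribute only bounded ratios), the ratio of the two coefficient families is bounded above and below by $C^{|\mathbf{m}|}$ times a polynomial in $|\mathbf{m}|$. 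Because Gevrey-ness of multi-order $\mathbf{s}$ is a condition on the radius of convergence of $\sum f_{\mathbf{m}}\, y^{\mathbf{k}+\mathbf{m}}/(\mathbf{k}+\mathbf{m})!^{\mathbf{s}-\mathbf{1}}$, and such a radius is unchanged when the coefficients are multiplied by anything of subexponential-times-geometric growth, $1)$ and $2)$ are equivalent. This is essentially the argument of Lemma~1 in \cite{Ohara-Takayama}, and I would cite it for the routine estimates.

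For $2)\Leftrightarrow 3)$ I would compute $u(\mathbf{m})_-!/u(\mathbf{m})_+!$ explicitly in terms of the $b_i$. Writing $u(\mathbf{m})=\sum_{i=d+1}^n m_i b_i$ with each $b_i\in\Q^d\times\N^{n-d}$, the last $n-d$ coordinates of $u(\mathbf{m})$ are exactly $\mathbf{m}$, and the first $d$ coordinates are $\sum_i m_i (b_i)_{1\le j\le d}$. Using Stirling's formula $\log(w!)=w\log w - w + O(\log w)$ coordinatewise, one gets $\log\big(u(\mathbf{m})_-!/u(\mathbf{m})_+!\big)$ is, up to a term that is $O(\log|\mathbf{m}|)$ and a term linear in the $m_i$, equal to $-\sum_{i=d+1}^n |b_i|\, m_i\log m_i$ asymptotically (the key cancellation is $\sum_j (u_j\log u_j)$ over all $n$ coordinates collapsing because $\sum_j (b_i)_j = |b_i|$ when one sums the linear pieces). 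Comparing with $\log\prod_{j=d+1}^n (k_j+m_j)!^{-|b_j|} = -\sum_j |b_j| \log((k_j+m_j)!) \sim -\sum_j |b_j| m_j \log m_j$ by Stirling again, we see the two coefficient families differ multiplicatively by $\exp(\text{linear in }\mathbf{m} + O(\log|\mathbf{m}|))$, i.e.\ by a geometric-times-polynomial factor, hence $2)\Leftrightarrow 3)$ by the same insensitivity of convergence radii. \textbf{The main obstacle} is carrying out this asymptotic comparison cleanly when some coordinates $m_i$ stay bounded while others go to infinity, i.e.\ making the Stirling estimates uniform over the index set $D_{\mathbf{k}}$ rather than along a single ray; I would handle this by noting that on the subset of indices with $m_i$ bounded the relevant factorials are bounded above and below by constants, so those coordinates can be absorbed into the geometric factor, and only the unbounded coordinates need the genuine Stirling asymptotics.

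Granting the three equivalences for multi-orders, the ``in particular'' claim is immediate: taking $s_i = 1-|b_i|$ makes $\mathbf{s}-\mathbf{1} = (-|b_i|)_i$, so the series in $3)$ becomes $\sum_{\mathbf{k}+\mathbf{m}\in D_{\mathbf{k}}} y^{\mathbf{k}+\mathbf{m}}$, a subseries of a geometric series, which is trivially convergent near $y=0$; hence $3)$ holds for this $\mathbf{s}$, and therefore so do $1)$ and $2)$. Finally, for the single-order statement I would invoke the Remark following Definition~\ref{definition-Gevrey}: a series that is Gevrey of multi-order $\mathbf{s}$ is Gevrey of order $\max_i s_i$. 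Conversely, being Gevrey of order $s$ along $y=0$ is the condition that $\sum f_{\mathbf{m}} y^{\mathbf{k}+\mathbf{m}}/((\mathbf{k}+\mathbf{m})!)^{s-1}$ converges, where now $(\mathbf{k}+\mathbf{m})! = \prod_j (k_j+m_j)!$; the three coefficient comparisons established above are multiplicative by a geometric-times-polynomial factor \emph{regardless} of which weight $((\cdot)!)^{s-1}$ or $(\cdot)!^{\mathbf{s}-\mathbf{1}}$ we divide by, so the equivalences $1)\Leftrightarrow 2)\Leftrightarrow 3)$ transfer verbatim with ``multi-order $\mathbf{s}$'' replaced by ``order $s$''. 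Taking $s = \max_i\{1-|b_i|\}$ and applying the ``in particular'' case coordinatewise then shows all three series are Gevrey of that order, completing the proof.
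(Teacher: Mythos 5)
Your proof is correct and is essentially the paper's own argument: both reduce $1)\Leftrightarrow 2)$ to the observation that $|[\alpha]_m|/m!$ grows and decays at most geometrically in $m$ (the paper proves this via the ratio test on $c_m=[\alpha]_m/m!$, you phrase it via $\binom{v_i}{w}$), and both reduce $2)\Leftrightarrow 3)$ to Stirling-type estimates. The one stylistic difference is in the second step: where you take logarithms and invoke Stirling's asymptotic $\log w!=w\log w-w+O(\log w)$ directly and must then worry about uniformity, the paper instead chains three explicit two-sided factorial inequalities, namely $(C')^m m!^q\le (qm)!\le (D')^m m!^q$ (which is the only place Stirling is used), $(m-n)!\le m!/n!\le 2^m(m-n)!$, and $(C'')^{\sum m_j}\prod m_i!\le(\sum m_i)!\le (D'')^{\sum m_j}\prod m_i!$; this chain handles the uniformity over $D_{\mathbf k}$ automatically and avoids asymptotic bookkeeping. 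Your identification of the ``main obstacle'' is slightly off target: the delicate case for the log-Stirling comparison is not that some $m_i$ stay bounded (that case is genuinely harmless, as you say), but that cancellation among the $(b_i)_j$ can make a coordinate $u_j(\mathbf m)=\sum_i m_i(b_i)_j$ small even when every $m_i$ is large. Fortunately this too is harmless, since the term $u_j\log|u_j|$ is then small and the difference from $u_j\log|\mathbf m|$ stays $O(|\mathbf m|)$ (the worst case, $|u_j|\approx|\mathbf m|/e$, still gives only a linear contribution), so your conclusion stands; but the paper's inequality chain is the cleaner way to make this uniform estimate airtight.
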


\begin{proof}
$\forall \alpha \in \C$, $\forall m \in \N$ with $[\alpha]_m \neq 0$
there exists $C, D >0$ such that:

\begin{equation}
C^m | [\alpha ]_m | \leq m! \leq |[\alpha ]_m | D^m \label{formula1}
\end{equation}

For the proof of (\ref{formula1}) it is enough to consider $c_m :=
[\alpha ]_m / m!$ and see that $\lim_{m \rightarrow \infty }
|c_{m+1}/c_m |=1$. The proof for (\ref{formula1}) with $\alpha + m$
instead of $\alpha$ is analogous. It follows that 1) and 2) are
equivalent.

\vspace{.3cm}

We can use Stirling's formula $m! \sim \sqrt{2 \pi m} (m/e)^m$ in
order to prove that $\forall m\in \N$, $\forall q \in \Q_{+}$ with
$q m \in \N$, there exist $C',D'>0$ verifying:

\begin{equation}
(C')^m m!^q \leq (qm)! \leq (D')^m m!^q \label{formula4}
\end{equation}

Take $\lambda \in \N^{\ast}$ such that $\lambda b_i \in \Z^n$ for
all $i=d+1,\ldots ,n$. Then by (\ref{formula4}) we have 2) if and
only if the series $\displaystyle \sum_{\mathbf{k}+\mathbf{m} \in
D_{\mathbf{k}} } \dfrac{((\lambda u(\mathbf{m})_{-})!)^{1/ \lambda
}}{((\lambda u(\mathbf{m})_{+})!)^{1/ \lambda }}
y^{\mathbf{k}+\mathbf{m}}$ is Gevrey of multi-order $\mathbf{s}$
along $y=0$.

\vspace{.3cm}

For the rest of the proof we assume for simplicity that
$D_{\mathbf{k}} \subseteq (\mathbf{k} + \N^{n-d})\cap \N^{n-d}$. The
equivalence of 2) and 3) can be proven without this assumption but
it is necessary to distinguish more cases.

\vspace{.3cm}

Observe that $\lambda u(m)_{+}- \lambda u(m)_{-}= \sum_{i=d+1}^n
\lambda (b_{i})_{+}  m_i  - \sum_{i=d+1}^n  \lambda (b_{i})_{-} m_i$
and $u(m)_{+},u(m)_{-}, \sum_{i=d+1}^n \lambda (b_{i})_{+} m_i ,
\sum_{i=d+1}^n \lambda (b_{i})_{-} m_i \in \N^n$. However,
$u(m)_{+}, u(m)_{-}$ have disjoint supports while, in general, $
\sum_{i=d+1}^n \lambda (b_{i})_{+} m_i $ and $ \sum_{i=d+1}^n
\lambda (b_{i})_{-} m_i $ do not.

\vspace{.3cm}

On the other hand, for all $ m , n \in \N$ with $n\leq m$ we have
that:

\begin{equation}
(m-n)! \leq  \frac{m!}{n!} \leq 2^m (m-n)! \label{formula2}
\end{equation}

Then by (\ref{formula2}) we have 2) if and only if

\begin{equation} \displaystyle \sum_{\mathbf{k}+\mathbf{m} \in S}
\left(\dfrac{(\sum_{i=d+1}^n \lambda (b_{i})_{-}  m_i
)!}{(\sum_{i=d+1}^n \lambda (b_{i} )_{+} m_i )!}\right)^{1/\lambda}
y^{\mathbf{k}+\mathbf{m}}\label{series-intermediate2}
\end{equation} is Gevrey of multi-order $\mathbf{s}$ along $y=0$.

\vspace{.3cm}

If we replace $m$ by $m+n$ in (\ref{formula2}) and multiply by $n!$
we obtain a formula that can be generalized by induction. We obtain
that $\forall m_{d+1} ,\ldots , m_{n} \in \N$ there exist $C'' , D''
>0$ such that:

\begin{equation}
(C'')^{\sum_{j} m_j} \prod_{i} m_i ! \leq  (\sum m_i)! \leq
(D'')^{\sum_{j} m_j} \prod_{i} m_i !  \label{formula3}
\end{equation}

A combination of (\ref{formula3}) and (\ref{formula4}) proves that
3) holds if and only if (\ref{series-intermediate2}) is Gevrey of
multi-order $\mathbf{s}$ along $y=0$.

\vspace{.3cm}

Finally, it is clear that 3) is true for $s_i = 1- |b_i|$, $i=d+1,
\ldots ,n$.
\end{proof}

\begin{nota}
From the proof of the equivalence of 1) and 2) in Lemma
\ref{equiv-Gevrey} it can be deduced that after applying
$\rho_{\mathbf{s}}$ to the series in 1) there exists an open set $W$
such that this series converges in $W$ and $0\in W$ does not depend
on $v$ but in $D_k -k$.
\end{nota}

Consider $\mathbf{s}=(s_j)_{j\notin \sigma}$ with $$s_j :=
|A_{\sigma}^{-1}a_j|,\; j\notin \sigma$$ and $s=\max_{i} \{ s_i \}$
throughout this section.

\begin{lema}\label{lema-Gevrey}
For all $\mathbf{k}\in \N^{n-d}$ the series

$$\psi_{\sigma}^{\mathbf{k}} := \sum_{\mathbf{k}+\mathbf{m}\in S_{k}}
\frac{[v^{k}]_{u(m)_{-}}}{[v^{\mathbf{k}}
+u(\mathbf{m})]_{u(\mathbf{m})_{+}}} y^{\mathbf{k}+\mathbf{m}}$$ is
Gevrey of multi-order $\mathbf{s}$ along $y= \mathbf{0}\in
\C^{n-d}$. Moreover, if $\beta$ is very generic then it has Gevrey
index $s$ along $y=\mathbf{0}\in \C^{n-d}$.
\end{lema}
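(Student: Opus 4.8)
The plan is to apply Lemma~\ref{equiv-Gevrey} directly to the series $\psi_{\sigma}^{\mathbf{k}}$, with the right dictionary of data. First I would set $b_i := B_{\sigma} e_i = (-A_{\sigma}^{-1}a_i, e_i) \in \Q^d \times \N^{n-d}$ for $i \notin \sigma$ (here $e_i$ is the corresponding standard basis vector of $\R^{n-d}$), so that $u(\mathbf{m}) = \sum_{i \notin \sigma} m_i b_i = u(\mathbf{m})$ in the notation already fixed before Remark~\ref{disjoint-supports-2}, and so that $|b_i| = |A_{\sigma}^{-1}a_i|$ (note every coordinate of $b_i$ coming from the $I_{n-d}$ block contributes $0$ to this quantity once we subtract $1$, since $|b_i| = |{-A_{\sigma}^{-1}a_i}| + 1$; one must be a little careful here, because $|b_i|$ as the paper uses it is the sum of \emph{all} coordinates, so $1 - |b_i| = -|A_{\sigma}^{-1}a_i|$ only if the entries of $A_{\sigma}^{-1}a_i$ are handled with sign — in fact $|b_i|$ should be read as $\sum_j (b_i)_j$, giving $1 - |b_i| = |A_{\sigma}^{-1}a_i| - \text{(sum of coords of }A_{\sigma}^{-1}a_i) \cdots$). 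Let me instead simply take $s_j = 1 - |b_j|$ as Lemma~\ref{equiv-Gevrey} dictates and observe that, after the change of sign conventions, this is exactly $s_j = |A_{\sigma}^{-1}a_j|$ as defined before the statement; this identification is the one bookkeeping point that needs care and I would state it explicitly.

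Next I would take $v := v^{\mathbf{k}}$ and $D_{\mathbf{k}} := S_{\mathbf{k}}$. The hypotheses of Lemma~\ref{equiv-Gevrey} are met: by construction $S_{\mathbf{k}} \subseteq \{\mathbf{k}+\mathbf{m} \in \N^{n-d} : u(\mathbf{m}) \in \Z^n\}$ because $\mathbf{k}+\mathbf{m} \in \Lambda_{\mathbf{k}}$ forces $\sum_{i\notin\sigma} a_i m_i \in \Z A_\sigma$, hence $A_\sigma^{-1}\sum m_i a_i \in \Z^d$, hence $u(\mathbf{m}) \in \Z^n$; and the definition of $S_{\mathbf{k}}$ is precisely $\operatorname{nsupp}(v^{\mathbf{k}+\mathbf{m}}) = \operatorname{nsupp}(v^{\mathbf{k}})$, which is the condition $\operatorname{nsupp}(v + u(\mathbf{m})) = \operatorname{nsupp}(v)$ since $v^{\mathbf{k}+\mathbf{m}} = v^{\mathbf{k}} + u(\mathbf{m})$. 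Then statement 1) of Lemma~\ref{equiv-Gevrey} says exactly that $\psi_{\sigma}^{\mathbf{k}}$ is Gevrey of multi-order $\mathbf{s}$ along $y = \mathbf{0}$, and the ``in particular'' clause gives the claim for $s_j = 1 - |b_j| = |A_{\sigma}^{-1}a_j|$. This proves the first assertion.

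For the second assertion — that $s = \max_j |A_{\sigma}^{-1}a_j|$ is the \emph{exact} Gevrey index when $\beta$ is very generic — I would use the equivalence of 1) and 3) in Lemma~\ref{equiv-Gevrey}. When $\beta$ is very generic, $v^{\mathbf{k}}$ has no negative integer coordinate for every $\mathbf{k}$, so $\operatorname{nsupp}(v^{\mathbf{k}}) = \emptyset$ and therefore $S_{\mathbf{k}} = \Lambda_{\mathbf{k}}$; moreover $\varphi_{v^{\mathbf{k}}} \neq 0$. Picking $j_0$ with $|A_{\sigma}^{-1}a_{j_0}| = s$ and restricting the sum in statement 3) to $\mathbf{m} = m\, e_{j_0}$ with $m e_{j_0} \in \Lambda_{\mathbf{k}} - \mathbf{k}$ for infinitely many $m$ (such $m$ form an arithmetic progression, nonempty by the lattice-index computation in Lemma~\ref{cardinal-lattice} applied to multiples of $|\det A_\sigma|$), I get a subseries of $\rho_{s'}^{\tau}$-type whose coefficients grow like $(m!)^{s - s'}\cdot(\text{something bounded below})$, which diverges for any $s' < s$. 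I would phrase this via statement 3): the relevant coefficient is $\prod_{j\notin\sigma}(k_j+m_j)!^{-|b_j|}$, and along the ray through $e_{j_0}$ this is $\sim (m!)^{-|b_{j_0}|}\cdot(\text{const})^m = (m!)^{-s}(\text{const})^m$ after absorbing the $1-$shift, so after dividing by $(m!)^{s'-1}$ the terms of $\rho_{s'}$ are $\sim (m!)^{1-s'}\cdots$, failing to tend to zero — hence nonconvergence for $s' < s$. The main obstacle is making the sign/shift bookkeeping between $|b_j|$, $1-|b_j|$, and $|A_\sigma^{-1}a_j|$ fully rigorous, and checking that the sublattice along a single coordinate direction really is infinite so that the lower-bound ray argument is not vacuous; both are short but must be done carefully, and I would isolate the first as a one-line remark before invoking Lemma~\ref{equiv-Gevrey}.
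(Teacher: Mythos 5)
Your proposal is correct and follows essentially the same route as the paper: apply Lemma~\ref{equiv-Gevrey} with $b_{d+i}$ the $i$-th column of $B_\sigma$, $D_{\mathbf{k}}=S_{\mathbf{k}}$ and $v=v^{\mathbf{k}}$ to get the multi-order, then note $S_{\mathbf{k}}=\Lambda_{\mathbf{k}}$ for very generic $\beta$ and read off the Gevrey index from part~3). The bookkeeping point you flag resolves cleanly and needs no case analysis: $|b_j|$ is the sum of all coordinates of $b_j=(-A_\sigma^{-1}a_j,e_j)$, namely $-|A_\sigma^{-1}a_j|+1$, so $1-|b_j|=|A_\sigma^{-1}a_j|$ directly (your parenthetical wanders, but the final identification you assert is right).
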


\begin{proof}
It follows from Lemma \ref{equiv-Gevrey} (if we take $b_{d+i}$ equal
to the $i$-th column of $B_{\sigma}$,
$D_{\mathbf{k}}=S_{\mathbf{k}}$ and $v=v^{\mathbf{k}}$) that
$\psi_{\sigma}^{\mathbf{k}}$ is Gevrey of multi-order $\mathbf{s}$
along $y = \mathbf{0}$.

\vspace{.3cm}

If $\beta$ is very generic we have that $S_{\mathbf{k}} =
\Lambda_{\mathbf{k}}$ and it is obvious that the series in 3) of
Lemma \ref{equiv-Gevrey} has Gevrey index $s$ in this case.
\end{proof}

\begin{cor}
The series $\phi_{\sigma}^{\mathbf{k}} $ is Gevrey of multi-order
$\mathbf{s}=(s_j)_{j\notin \sigma}$ along $Y_{\sigma}$ at any point
of $Y_{\sigma} \cap \{ x_{i}\neq 0 : \; i\in \sigma \}$. If $\beta$
is very generic then it is Gevrey with index $s$ along $Y_{\sigma}$.
\end{cor}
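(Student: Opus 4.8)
The plan is to transfer the multi-order Gevrey estimate for the ``coefficient'' series $\psi_{\sigma}^{\mathbf{k}}$ in the variables $y=x_{\overline{\sigma}}$ (Lemma \ref{lema-Gevrey}) to the genuine formal series $\phi_{\sigma}^{\mathbf{k}}$ along $Y_{\sigma}$ at a point $p\in Y_{\sigma}\cap\{x_i\neq 0:\ i\in\sigma\}$. The point is that $\phi_{\sigma}^{\mathbf{k}}$ is obtained from $\psi_{\sigma}^{\mathbf{k}}$ by substituting for each monomial $y^{\mathbf{k}+\mathbf{m}}=x_{\overline{\sigma}}^{\mathbf{k}+\mathbf{m}}$ the full monomial $x^{v^{\mathbf{k}}+u(\mathbf{m})}=x_{\sigma}^{A_{\sigma}^{-1}(\beta-A_{\overline{\sigma}}(\mathbf{k}+\mathbf{m}))}x_{\overline{\sigma}}^{\mathbf{k}+\mathbf{m}}$. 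Writing $f_{\mathbf{k}+\mathbf{m}}(x_\sigma)$ for the coefficient of $x_{\overline{\sigma}}^{\mathbf{k}+\mathbf{m}}$ in this expansion, one has $f_{\mathbf{k}+\mathbf{m}}(x_\sigma)=\dfrac{[v^{\mathbf{k}}]_{u(\mathbf{m})_-}}{[v^{\mathbf{k}}+u(\mathbf{m})]_{u(\mathbf{m})_+}}\,x_\sigma^{A_\sigma^{-1}(\beta-A_{\overline\sigma}(\mathbf k+\mathbf m))}$, an analytic function on the open set $\{x_i\neq 0:\ i\in\sigma\}$ whose modulus, on a fixed compact polydisc neighborhood $K$ of $p_\sigma$ in $(\C^*)^d$, is bounded by $C_0 R^{|\mathbf{m}|}\,\bigl|[v^{\mathbf{k}}]_{u(\mathbf{m})_-}/[v^{\mathbf{k}}+u(\mathbf{m})]_{u(\mathbf{m})_+}\bigr|$ for suitable constants $C_0,R>0$ (the exponents of $x_\sigma$ grow at most linearly in $\mathbf{m}$, so each coordinate $|x_i|^{(\text{linear in }\mathbf m)}$ is dominated by a geometric factor on $K$). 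This reduces the convergence of $\rho_{\mathbf{s}}^{\sigma}(\phi_{\sigma}^{\mathbf{k}})$ near $p$ to the convergence near $0$ of the series in part 1) of Lemma \ref{equiv-Gevrey} after rescaling the $y$-variables, which is exactly what Lemma \ref{lema-Gevrey} gives.

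First I would fix $p$, choose the compact polydisc $K\subseteq(\C^*)^d$ around $p_\sigma$, and record the geometric bound on $|f_{\mathbf{k}+\mathbf{m}}(x_\sigma)|$ described above; this is the only genuinely new computation and it is routine. Second, I would invoke the Remark following Lemma \ref{equiv-Gevrey}: after applying $\rho_{\mathbf{s}}$ the series in part 1) converges on an open neighborhood $W\ni 0$ of $y=0$ that depends only on $S_{\mathbf{k}}-\mathbf{k}$, not on $v^{\mathbf{k}}$; rescaling $y_i\mapsto R\,y_i$ then shows $\rho_{\mathbf{s}}^{\sigma}(\phi_\sigma^{\mathbf{k}})=\sum_{\mathbf{k}+\mathbf{m}\in S_{\mathbf k}} f_{\mathbf k+\mathbf m}(x_\sigma)/(\mathbf k+\mathbf m)!^{\mathbf s-\mathbf 1}\,x_{\overline\sigma}^{\mathbf k+\mathbf m}$ converges for $x_\sigma\in K$ and $x_{\overline\sigma}$ in a small polydisc, i.e. $\phi_\sigma^{\mathbf k}$ is Gevrey of multi-order $\mathbf s$ along $Y_\sigma$ at $p$. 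For the ``very generic'' statement, I would again use Lemma \ref{lema-Gevrey}: in that case $S_{\mathbf k}=\Lambda_{\mathbf k}$ and $\psi_\sigma^{\mathbf k}$ has Gevrey index exactly $s=\max_i s_i$; since passing from $\psi_\sigma^{\mathbf k}$ to $\phi_\sigma^{\mathbf k}$ only multiplies each coefficient by a function analytic and nonvanishing at $p$ (for very generic $\beta$ none of the Pochhammer denominators vanish and $\varphi_{v^{\mathbf k}}\neq 0$) together with a monomial in $x_\sigma$ bounded above and below on $K$, the index cannot drop: if $\rho_{s'}^\sigma(\phi_\sigma^{\mathbf k})$ converged for some $s'<s$ then, comparing with the lower geometric bound $|f_{\mathbf k+\mathbf m}|\geq c_0 r^{|\mathbf m|}|[v^{\mathbf k}]_{u_-}/[v^{\mathbf k}+u]_{u_+}|$ valid on a suitable polyradius, $\rho_{s'}^\sigma(\psi_\sigma^{\mathbf k})$ would converge too, contradicting Lemma \ref{lema-Gevrey}.

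The main obstacle is the bookkeeping in the first step: one must check that the exponent vector $A_\sigma^{-1}(\beta-A_{\overline\sigma}(\mathbf k+\mathbf m))$ of $x_\sigma$ is affine-linear in $\mathbf m$ with bounded-norm linear part, so that on the compact set $K\subseteq(\C^*)^d$ the factor $|x_\sigma^{A_\sigma^{-1}(\beta-A_{\overline\sigma}(\mathbf k+\mathbf m))}|$ is squeezed between $c_0 r^{|\mathbf m|}$ and $C_0 R^{|\mathbf m|}$; this is where one needs $p\in Y_\sigma\cap\{x_i\neq 0:\ i\in\sigma\}$ so that all $|x_i|$ with $i\in\sigma$ stay bounded away from $0$ and $\infty$. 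Once this uniform two-sided geometric control is in place, the corollary follows formally from Lemma \ref{lema-Gevrey} and the Remark after Lemma \ref{equiv-Gevrey}, since multiplying a multi-order-$\mathbf s$ (respectively index-$s$) series by geometric factors in the summation variables preserves the multi-order (respectively the index) of Gevrey growth.
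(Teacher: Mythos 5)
Your proof is correct and essentially the same as the paper's, though the paper expresses it more compactly. The paper simply sets $y_j:=x_{\sigma}^{-A_{\sigma}^{-1}a_j}\,x_j$ for $j\notin\sigma$, observes that $\phi_{\sigma}^{\mathbf{k}}(x)=x_{\sigma}^{A_{\sigma}^{-1}\beta}\,\psi_{\sigma}^{\mathbf{k}}(y)$, and invokes Lemma~\ref{lema-Gevrey}. That change of variables is precisely a packaged form of your two-sided geometric estimate: the factor $x_{\sigma}^{A_{\sigma}^{-1}(\beta-A_{\overline{\sigma}}(\mathbf{k}+\mathbf{m}))}$ equals $x_{\sigma}^{A_{\sigma}^{-1}\beta}\prod_j(x_{\sigma}^{-A_{\sigma}^{-1}a_j})^{(\mathbf{k}+\mathbf{m})_j}$, and boundedness of $|x_{\sigma}^{\pm A_{\sigma}^{-1}a_j}|$ on a compact $K\subset(\C^{\ast})^d$ around $p_{\sigma}$ is exactly the $c_0 r^{|\mathbf{m}|}\leq\cdot\leq C_0 R^{|\mathbf{m}|}$ control you write out; the paper's statement that ``the result follows from Lemma~\ref{lema-Gevrey}'' implicitly uses the same compactness. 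Your treatment of the index for very generic $\beta$ via the lower geometric bound is also the same mechanism, just made explicit: if $\rho_{s'}^{\sigma}(\phi_{\sigma}^{\mathbf{k}})$ converged for some $s'<s$ then $\rho_{s'}(\psi_{\sigma}^{\mathbf{k}})$ would converge near $y=0$, contradicting Lemma~\ref{lema-Gevrey} (one should pass through boundedness of terms to get absolute convergence on a slightly smaller polyradius, which you gloss over but which is standard). So the only real difference is presentational: the paper's substitution hides the bookkeeping; you carry it out by hand.
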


\begin{proof}
If we take $y=(y_j)_{j\notin \sigma}$ with $y_j :=
x_{\sigma}^{-A_{\sigma}^{-1}a_{j}} x_{j}$, $j\notin \sigma$, then
$\phi_{\sigma}^{\mathbf{k}} (x)=x_{\sigma}^{A_{\sigma}^{-1}\beta}
\psi_{\sigma}^{\mathbf{k}} (y)$ and the result follows from Lemma
\ref{lema-Gevrey}.
\end{proof}

\begin{ejem}
\textbf{(Continuation of Example \ref{ejem1})} We have that $$\rho_s
(\phi_{v^{0}})=x_1^{\beta_1} x_2^{\beta_2 /2 }\sum_{m\geq 0}
\frac{[\beta_1 ]_{6m} [\beta_2 /2 ]_m }{(2m)!^{s}}
\left(\frac{x_3^2}{x_1^6 x_2}\right)^{m}$$ It is easy to see that
$\rho_s (\phi_{v^{0}})$ has a nonempty domain of convergence if and
only if $s\geq 7/2$ when $\beta_1 , \beta_2 /2 \notin \N$ (use
D'Alembert criterion for the series in one variable $y= x_3^2
/(x_1^6 x_2 )$). Then $\phi_{v^{0}}$ is a Gevrey series solution of
$\M_A (\beta )$ with index $s=7/2$ along $Y_{\sigma}=\{x_3 =0\}$ at
any point of $Y_{\sigma}\cap \{x_1 x_2 \neq 0\}$. Nevertheless,
$\phi_{v^{0}}$ is a finite sum if either $\beta_1 \in \N$ or
$\beta_2 /2 \in \N$ and so it has the same convergence domain as the
(multi-valued) function $x_1^{\beta_1} x_2^{\beta_2 /2 }$. If both
$\beta_1 , \beta_2 /2 \in \N$, then $\phi_{v^{0}}$ is a polynomial.

\vspace{.3cm}

Analogously, $\phi_{v^{1}}$ is a Gevrey series solution of order
$s=7/2$ along $Y_{\sigma}$ at any point of $Y_{\sigma }\cap \{x_1
x_2 \neq 0 \}$. It has Gevrey index $s=7/2$ if $\beta_1 -3 ,
(\beta_2 -1)/2 \notin \N$ and it is convergent in other case.

\vspace{.3cm}

Notice that $s=7/2$ is the unique algebraic slope of $\M_A (\beta )$
along $Y_{\sigma}=\{x_3 =0\}$ at $\mathbf{0}\in \C^3$ (see \cite{SW}
or \cite{hartillo_rmi}).
\end{ejem}

\vspace{.3cm}

The convergence domain of
$\rho_{\mathbf{s}}^{\emptyset}(\psi_{\sigma}^{\mathbf{k}})$ contains
$\{y\in \C^{n-d}: \; |y_j| < R , \; j \notin \sigma\}$ for certain
$R>0$. In particular,
$\rho_{\mathbf{s}}^{\sigma}(\phi_{v^{\mathbf{k}}})$ converges in
$$\{x\in \C^n : \; \prod_{i\in \sigma} x_{i} \neq 0 ,
\; |x_{j}|< R |x_{\sigma}^{A_{\sigma}^{-1}a_{j}}|, \; \forall j
\notin \sigma \}.$$

\noindent The unique hyperplane that contains $\sigma$ is
$$H_{\sigma}= \{\mathbf{y} \in \R^d : \; |A_{\sigma}^{-1}\mathbf{y}|=1 \}
$$ and we denote by $H_{\sigma}^{-}:= \{
\mathbf{y} \in \R^d: \;|A_{\sigma}^{-1}\mathbf{y}|<1\}$ (resp. by
$H_{\sigma}^{+}:= \{ \mathbf{y} \in \R^d:
\;|A_{\sigma}^{-1}\mathbf{y}|>1\}$) the open affine half-space that
contains (resp. does not contain) the origin $\mathbf{0}\in \R^d$.

\vspace{.3cm}

\noindent Recall that $\mathbf{s}=(s_i )_{i\notin \sigma }$ where
$s_{i} = |A_{\sigma}^{-1}a_i|$ is the unique rational number such
that $a_i /s_i \in H_{\sigma}$. Moreover, $s_{i}>1$ (resp. $s_i <1$)
if and only if $a_i \in H_{\sigma}^{+}$ (resp. $a_i \in
H_{\sigma}^{-}$). Taking the set $$\tau=\{i : \; a_i \notin
H_{\sigma}^{+}\}$$ and $\mathbf{s}'=(s_{i})_{i\notin \tau}$ we have
that $\rho_{\mathbf{s}'}^{\tau }(\phi_{v^{\mathbf{k}}})$ converges
in the open set $$ U_{\sigma} ':= \{x\in \C^n : \; \prod_{i\in
\sigma} x_{i} \neq 0 , \; |x_{j}|< R
|x_{\sigma}^{A_{\sigma}^{-1}a_{j}}|, \; \forall a_j \in
(H_{\sigma}\setminus \sigma )\cup H_{\sigma}^{+} \}.$$ This implies
that $\phi_{v^{\mathbf{k}}}$ is Gevrey of multi-order $\mathbf{s}'$
along $Y_{\tau}$ at any point of $U_{\sigma}'\cap Y_{\tau}$. Then,
if we consider \begin{equation} U_{\sigma}:= \{x\in \C^n : \;
\prod_{i\in \sigma} x_{i} \neq 0 , \; |x_{j}|< R
|x_{\sigma}^{A_{\sigma}^{-1}a_{j}}|, \; \forall a_j \in
H_{\sigma}\setminus \sigma \} \label{domain}
\end{equation} the following result is obtained.

\begin{teo}\label{GevreyI}
For any set $\varsigma $ with $\sigma \subseteq \varsigma \subseteq
\tau$ the series $$\phi_{\sigma}^{\mathbf{k}}=\sum_{\mathbf{k}+
\mathbf{m}\in S_{\mathbf{k}}}  \frac{[v^{\mathbf{k}}
]_{u(\mathbf{m})_{-}}}{[v^{\mathbf{k}}
+u(\mathbf{m})]_{u(\mathbf{m})_{+}}} \; x^{v^{\mathbf{k}}
+u(\mathbf{m})}$$ is a Gevrey series solution of $\M_A (\beta )$ of
order $s=\operatorname{max} \{s_i =|A_{\sigma}^{-1}a_i| : \; i\notin
\sigma \}$ along $Y_{\varsigma}$ at any point of $Y_{\varsigma} \cap
U_{\sigma}$. If $\beta$ is very generic then $s$ is its Gevrey
index.
\end{teo}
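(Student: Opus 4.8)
The statement collects together facts that have essentially all been prepared: the series $\phi_\sigma^{\mathbf{k}}$ is a formal solution of $\M_A(\beta)$ along $Y_\sigma$ (hence along any $Y_\varsigma$ with $\sigma\subseteq\varsigma$, since the $Y_\varsigma$ are coordinate subspaces containing $Y_\sigma$), and the Corollary after Lemma~\ref{lema-Gevrey} already shows it is Gevrey of multi-order $\mathbf{s}=(s_i)_{i\notin\sigma}$ along $Y_\sigma$ at points of $Y_\sigma\cap\{x_i\neq 0: i\in\sigma\}$. So the first thing to do is to assemble these into the claim that $\phi_\sigma^{\mathbf{k}}$ is a Gevrey solution of order $s=\max_i\{s_i\}$ along $Y_\varsigma$, using the Remark that a Gevrey series of multi-order $\mathbf{s}$ is Gevrey of order $\max_i s_i$.

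The substantive point is the passage from the subspace $Y_\sigma$ to the possibly larger $Y_\varsigma$, and the precise open set $Y_\varsigma\cap U_\sigma$ on which convergence holds. The plan is to follow exactly the discussion preceding the theorem: rewrite $\phi_\sigma^{\mathbf{k}}(x)=x_\sigma^{A_\sigma^{-1}\beta}\,\psi_\sigma^{\mathbf{k}}(y)$ with $y_j=x_\sigma^{-A_\sigma^{-1}a_j}x_j$, and observe that $\rho_{\mathbf{s}}^{\emptyset}(\psi_\sigma^{\mathbf{k}})$ converges on a polydisc $\{|y_j|<R\}$ by Lemma~\ref{lema-Gevrey} (via the convergence statement in Lemma~\ref{equiv-Gevrey}, where $b_{d+i}$ is the $i$-th column of $B_\sigma$ so that $1-|b_{d+i}|=1-s_{d+i}$). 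Substituting back gives convergence of $\rho_{\mathbf{s}}^\sigma(\phi_{v^{\mathbf{k}}})$ on $\{\prod_{i\in\sigma}x_i\neq 0,\ |x_j|<R|x_\sigma^{A_\sigma^{-1}a_j}|\}$. Now for indices $j$ with $s_j<1$ — i.e.\ $a_j\in H_\sigma^-$ — the factor $\alpha_j!^{\,s_j-1}$ in the definition of $\rho$ only helps convergence, so these variables may be left untouched; for $j$ with $s_j=1$ — i.e.\ $a_j\in H_\sigma\setminus\sigma$ — one keeps the corresponding constraint. This is precisely why the relevant subset is $\tau=\{i: a_i\notin H_\sigma^+\}$ and any $\varsigma$ in between: for $i\in\varsigma\setminus\sigma$ one has $s_i\le 1$, so forgetting those variables (i.e.\ treating $x_i$ as a coordinate on $Y_\varsigma$ rather than a deformation variable) does not destroy convergence, and the surviving constraints are exactly those indexed by $a_j\in H_\sigma\setminus\sigma$, giving the open set $U_\sigma$ of~\eqref{domain}. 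Hence $\phi_\sigma^{\mathbf{k}}$ is Gevrey of order $s$ along $Y_\varsigma$ at every point of $Y_\varsigma\cap U_\sigma$.

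For the last sentence, when $\beta$ is very generic one has $S_{\mathbf{k}}=\Lambda_{\mathbf{k}}$, so Lemma~\ref{lema-Gevrey} already says $\psi_\sigma^{\mathbf{k}}$ has Gevrey index exactly $s$ along $y=\mathbf 0\in\C^{n-d}$; the same change of variables transports this to say $\phi_\sigma^{\mathbf{k}}$ has Gevrey index $s$ along $Y_\sigma$, and restricting to $Y_\varsigma$ cannot lower the index (the slower-growing coefficients in the forgotten directions are only being renamed as holomorphic coefficients). One must check that applying $\rho_{s'}^\varsigma$ for $s'<s$ to $\phi_\sigma^{\mathbf{k}}$ still diverges at points of $Y_\varsigma\cap U_\sigma$: this follows because the monomials realizing the growth rate $s$ are those with $|A_\sigma^{-1}a_i|=s$ for some fixed $i$, and the divergence witnessed by the sub-series in the single variable $y_i$ (as in the Example computation) is unaffected by whether we keep or drop the other $y_j$.

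The main obstacle is purely bookkeeping: carefully matching up the three index sets $\sigma\subseteq\varsigma\subseteq\tau$ with the sign of $s_i-1$ and keeping track of which variables are ``deformation'' variables and which become coordinates on $Y_\varsigma$, so that the convergence region is correctly identified as $Y_\varsigma\cap U_\sigma$ and not something smaller. There is no new analytic input beyond Lemmas~\ref{equiv-Gevrey} and~\ref{lema-Gevrey}; everything else is the substitution $y_j=x_\sigma^{-A_\sigma^{-1}a_j}x_j$ together with the elementary observation that dropping variables with Gevrey multi-index $s_i\le 1$ neither breaks convergence nor improves the order.
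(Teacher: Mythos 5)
Your proposal is correct and follows essentially the same approach as the paper, which presents the discussion immediately preceding the theorem (the change of variables $y_j=x_\sigma^{-A_\sigma^{-1}a_j}x_j$, the convergence of $\rho_{\mathbf{s}}^{\emptyset}(\psi_\sigma^{\mathbf{k}})$ on a polydisc via Lemmas~\ref{equiv-Gevrey}--\ref{lema-Gevrey}, and the case analysis on $a_j$ relative to $H_\sigma$, $H_\sigma^{\pm}$ leading to $U_\sigma$) as the proof without a separate proof block. Your explicit check that the Gevrey index cannot drop when passing from $Y_\sigma$ to the larger $Y_\varsigma$, and that divergence for $s'<s$ is witnessed by a one-variable sub-series, is a faithful elaboration of what the paper leaves implicit after the Corollary to Lemma~\ref{lema-Gevrey}.
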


\begin{nota}
If $H_{\sigma}\cap \{a_i : \; i=1,\ldots , n \}=\sigma$ then
$U_{\sigma }= \{\prod_{i\in \sigma } x_i \neq 0\}$.
\end{nota}

\begin{nota}
Recall that in Theorem \ref{GevreyI} the vector
$v^{\mathbf{k}}=(A_{\sigma}^{-1}(\beta
-A_{\overline{\sigma}}\mathbf{k}),\mathbf{k})$ has minimal negative
support because we have chosen $\mathbf{k}\in \Lambda_{\mathbf{k}}$
this way. This guarantees that $\phi_{v^{\mathbf{k}}}$ is a solution
of $\M_A (\beta )$. However, this series is also Gevrey of order $s$
when $\mathbf{k}$ does not satisfy this condition.
\end{nota}

\section{Slopes of $\M_A (\beta )$ associated with a
simplex}\label{slopes-simplex-section}

In the context of Section \ref{construction-Gevrey} we fix a simplex
$\sigma \subseteq A$ with $\det (A_{\sigma })\neq 0$ and consider
$\mathbf{s}=(s_i )_{i\notin \sigma }$ where $s_{i} =
|A_{\sigma}^{-1}a_i|$. We consider $\tau =\{j :\; a_j \notin
H_{\sigma}^{+} \}\supseteq \sigma $ and the coordinate subspace
$Y_{\tau}=\{x_{j}=0: \; j\notin \tau \}$ in this section.

\vspace{.3cm}

Our purpose here is to construct one nonzero Gevrey series solution
of $\M_A (\beta ) $ in
$(\cO_{X|Y_{\tau}}(s)/\cO_{X|Y_{\tau}}(<s))_p$ for $p\in
Y_{\tau}\cap U_{\sigma}$ with support contained in the set
$\Lambda_{\mathbf{k}}\subseteq \N^{n-d} $ in the partition of
$\N^{n-d}$ (see Remark \ref{partition}) for all $\beta \in \C^d$. In
particular we will prove the following result:

\begin{prop}\label{dim-gevrey-simplex}
For $s=\max \{s_{i} = |A_{\sigma}^{-1}a_i|: \; i\notin \sigma \}$,
for all $p\in Y_{\tau} \cap U_{\sigma}$ and for all $\beta\in \C^d$:

$$\dim (\mathcal{H}om_{\D}(\M_A (\beta
),\cO_{X|Y_{\tau}}(s)/\cO_{X|Y_{\tau}}(<s)))_p \geq
\operatorname{vol}_{\Z A }(\Delta_{\sigma} ).$$
\end{prop}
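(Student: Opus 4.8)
The plan is to exhibit $\operatorname{vol}_{\Z A}(\Delta_\sigma)$ linearly independent classes in $(\mathcal{H}om_{\D}(\M_A(\beta),\cO_{X|Y_\tau}(s)/\cO_{X|Y_\tau}(<s)))_p$. The natural candidates are the classes $\overline{\phi_\sigma^{\mathbf{k}(i)}}$, $i=1,\dots,r$, where $r=[\Z A:\Z\sigma]=\operatorname{vol}_{\Z A}(\Delta_\sigma)$ and $\mathbf{k}(1),\dots,\mathbf{k}(r)$ are chosen as in Remark \ref{partition} (with each $\mathbf{k}(i)$ also chosen, as in the discussion preceding Theorem \ref{GevreyI}, so that $v^{\mathbf{k}(i)}$ has minimal negative support, hence $\phi_\sigma^{\mathbf{k}(i)}$ is a genuine solution of $\M_A(\beta)$). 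By Theorem \ref{GevreyI} applied with $\varsigma=\tau$, each $\phi_\sigma^{\mathbf{k}(i)}$ lies in $\cO_{X|Y_\tau}(s)_p$ for $p\in Y_\tau\cap U_\sigma$ and is a solution of $\M_A(\beta)$; so each defines a class in $\mathcal{H}om_{\D}(\M_A(\beta),\cO_{X|Y_\tau}(s)/\cO_{X|Y_\tau}(<s))_p$. The whole content of the proposition is therefore: (a) these $r$ classes span a space of dimension $\geq r$, i.e. no nontrivial $\C$-linear combination of them lies in $\cO_{X|Y_\tau}(<s)_p$, even though individually some of them might (when $\beta$ is not generic) already be convergent or of lower Gevrey index.

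The key observation that makes this work for \emph{all} $\beta$ is the disjoint-support property from Remark \ref{disjoint-supports-2}: writing $y_j=x_\sigma^{-A_\sigma^{-1}a_j}x_j$, the series $\phi_\sigma^{\mathbf{k}(i)}=x_\sigma^{A_\sigma^{-1}\beta}\psi_\sigma^{\mathbf{k}(i)}(y)$ have exponent supports in the pairwise disjoint sets $\Lambda_{\mathbf{k}(i)}\subseteq\N^{n-d}$ (Remark \ref{partition}), and these sets partition $\N^{n-d}$. Now suppose $f=\sum_{i=1}^r c_i\,\phi_\sigma^{\mathbf{k}(i)}\in\cO_{X|Y_\tau}(<s)_p$ with not all $c_i$ zero. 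The point is that membership in $\cO_{X|Y_\tau}(<s)$ is detected coefficient-block by coefficient-block: since the supports of the summands are disjoint, the homogeneous-in-$y$ components of $f$ are exactly the $c_i$-scaled components of the individual $\phi_\sigma^{\mathbf{k}(i)}$'s, so $f\in\cO_{X|Y_\tau}(<s)_p$ would force each $c_i\phi_\sigma^{\mathbf{k}(i)}\in\cO_{X|Y_\tau}(<s)_p$, hence $\phi_\sigma^{\mathbf{k}(i)}\in\cO_{X|Y_\tau}(<s)_p$ whenever $c_i\neq0$. Thus it suffices to show that \emph{at least one} of the $\phi_\sigma^{\mathbf{k}(i)}$ has Gevrey index exactly $s$ along $Y_\tau$ at $p$ — in fact we want more care here: we need that the particular linear combination cannot be killed, which via disjointness reduces to: there is \emph{no} choice for which all are of lower index, but actually we only need that the $c_i$ supported on index-$s$ series are forced to vanish, and then re-run the argument on what remains.

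So the cleaner formulation of the final step is: \textbf{among $\phi_\sigma^{\mathbf{k}(1)},\dots,\phi_\sigma^{\mathbf{k}(r)}$ at least one has Gevrey index equal to $s$ along $Y_\tau$ at $p$, and by the disjoint-support reduction any linear combination that lies in $\cO_{X|Y_\tau}(<s)_p$ must have zero coefficient on every index-$s$ series; iterating (the surviving series still have disjoint supports, and the sub-collection again contains one of maximal index among themselves unless it is empty) shows all $c_i=0$.} Here is where the main obstacle lies: for non-generic $\beta$, $S_{\mathbf{k}(i)}$ can be a proper subset of $\Lambda_{\mathbf{k}(i)}$ (the Pochhammer symbols in the denominator can blow up / the series can truncate), so the clean computation of the Gevrey index in Lemma \ref{lema-Gevrey} (which used $S_{\mathbf{k}}=\Lambda_{\mathbf{k}}$, valid only for very generic $\beta$) is not available, and indeed a single $\phi_\sigma^{\mathbf{k}(i)}$ may be convergent. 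I would handle this by choosing the representatives $\mathbf{k}(i)$ judiciously: pick for each residue class a $\mathbf{k}(i)$ maximizing the relevant valuation so that, together with the fact that one residue class — say the one with $A_\sigma^{-1}(\beta-A_{\overline\sigma}\mathbf{k})$ having no negative integer coordinate for cofinitely many $\mathbf{m}$ along the extremal direction realizing $s=\max_i|A_\sigma^{-1}a_i|$ — yields infinitely many nonvanishing Pochhammer quotients along that direction, so estimates (\ref{formula1})–(\ref{formula4}) from Lemma \ref{equiv-Gevrey} give a genuine index-$s$ series; more honestly, the argument should be: it is enough that the \emph{span} has dimension $\geq r$, and since the $r$ series are linearly independent as formal series (disjoint supports, none zero), the only way the span could drop in the quotient is if the quotient map kills a combination — but a combination is killed iff it is of lower index iff (by disjointness) each nonzero-coefficient summand is of lower index, and one checks directly from the explicit form $\psi_\sigma^{\mathbf{k}(i)}$ together with Lemma \ref{equiv-Gevrey}(3) that a series $\sum_{\mathbf{k}+\mathbf{m}\in S_{\mathbf{k}}}\prod_j(k_j+m_j)!^{-|b_j|}y^{\mathbf{k}+\mathbf{m}}$ restricted to the infinite set $S_{\mathbf{k}}$ still has index $s$ along $Y_\tau$ as long as $S_{\mathbf{k}}$ contains an infinite arithmetic progression in the direction of an index $i\notin\tau$, which can always be arranged by the choice of $\mathbf{k}(i)$. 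I expect the bulk of the write-up to be this last bookkeeping: reducing to the one-variable D'Alembert-type estimate on a single fixed $\phi_\sigma^{\mathbf{k}(i)}$ along the extremal coordinate direction realizing the maximum $s$, exactly as in the continuation of Example \ref{ejem1}, and verifying that at least one representative survives with index $s$ regardless of $\beta$.
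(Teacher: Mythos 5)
Your plan stalls at precisely the point you flag as ``the main obstacle,'' and the hand-wave there does not actually close the gap. You want to exhibit $r=\operatorname{vol}_{\Z A}(\Delta_\sigma)$ classes and you insist on using genuine solutions $\phi_\sigma^{\mathbf{k}(i)}$ of $\M_A(\beta)$, i.e.\ series $\phi_{v^{\mathbf{k}}}$ with $v^{\mathbf{k}}$ of minimal negative support. Your disjoint-support reduction is fine: the dimension of the span of the classes $\overline{\phi_\sigma^{\mathbf{k}(i)}}$ in $(\cO_{X|Y_\tau}(s)/\cO_{X|Y_\tau}(<s))_p$ is exactly the number of those $r$ series with Gevrey index $s$, because a summand of lower index is already zero in the quotient and the disjointness means nothing cancels. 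So to get dimension $\geq r$ you must produce, for \emph{every} residue class $\Lambda_{\mathbf{k}(i)}$, a representative whose associated series has Gevrey index exactly $s$. You claim this ``can always be arranged by the choice of $\mathbf{k}(i)$,'' but this is false: refining $\Lambda_{\mathbf{k}}$ by the negative support $\eta\subseteq\sigma$ of $A_\sigma^{-1}(\beta-A_{\overline\sigma}(\mathbf{k}+\mathbf{m}))$ into pieces $\Lambda_{\mathbf{k},\eta}$, the genuine solution in that class is forced to be $\phi_{\mathbf{k},\eta'}$ for $\eta'$ of \emph{minimal} cardinality (minimal negative support is what $I_A$-annihilation demands), while the piece that exhibits Gevrey index $s$ may be a different $\phi_{\mathbf{k},\eta}$; and, as the paper notes explicitly right before Lemma \ref{series-indice-adecuado}, \emph{in general one cannot take $\eta=\eta'$}. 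For such $\beta$ and such classes there is simply no genuine solution of index $s$, and your iteration argument is vacuous for that class (its coefficient is unconstrained because the series is already in $\cO_{X|Y_\tau}(<s)$).

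The step the paper supplies, and which your proposal is missing, is to \emph{drop the requirement of being a genuine solution}. One picks, for each $\mathbf{k}$, the series $\phi_{\mathbf{k},\eta}$ where $\eta$ is of minimal cardinality among those $\eta'$ for which $\phi_{\mathbf{k},\eta'}$ has Gevrey index $s$; this $\phi_{\mathbf{k},\eta}$ is annihilated by the Euler operators but possibly not by $I_A$. Lemma \ref{series-indice-adecuado} then shows $\Box_u(\phi_{\mathbf{k},\eta})\in\cO_{X|Y_\tau}(<s)$ for every $u\in L_A$, by a term-by-term analysis: the only monomials that fail to cancel in $\Box_u(\phi_{\mathbf{k},\eta})$ come from indices straddling the boundary of $\Lambda_{\mathbf{k},\eta}$ inside $\Lambda_{\mathbf{k}}$, and those land in pieces $\Lambda_{\mathbf{k},\varsigma}$ whose Gevrey index is strictly less than $s$ by the minimality of $\eta$. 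Hence $\phi_{\mathbf{k},\eta}$ is a solution of $\M_A(\beta)$ \emph{modulo} $\cO_{X|Y_\tau}(<s)$, which is exactly what the proposition needs. Your disjoint-support linear-independence argument then goes through unchanged with these $r$ series in place of the genuine solutions. This replacement is the content of the proposition, and without it your argument proves the bound only for very generic $\beta$ (where $S_{\mathbf{k}}=\Lambda_{\mathbf{k}}$ and every $\phi_\sigma^{\mathbf{k}(i)}$ does have index $s$), not for all $\beta$ as claimed.
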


As a consequence of Proposition \ref{dim-gevrey-simplex} and Lemma
\ref{lema-slope}, we obtain the following result that justifies the
name of this section:

\begin{cor}\label{analytic-slope} If $Y_{\tau}$ is a coordinate hyperplane (equivalently, the cardinal of $\tau$ is $n-1$) and $s=|A_{\sigma}^{-1}a_{\overline{\tau}}|>1$ then $s$ is an analytic slope
of $\M_A (\beta )$ along $Y_{\tau}$ at any point in the closure of
$Y_{\tau}\cap U_{\sigma }$.
\end{cor}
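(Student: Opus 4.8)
The plan is to derive Corollary \ref{analytic-slope} as an immediate combination of Proposition \ref{dim-gevrey-simplex} and Lemma \ref{lema-slope}, the only subtlety being the translation between the ``multi-order / subspace'' language used in Section \ref{construction-Gevrey} and the ``order / hypersurface'' hypotheses of Lemma \ref{lema-slope}. First I would observe that the assumption that $Y_{\tau}$ is a coordinate hyperplane means $\overline{\tau}=\{1,\ldots,n\}\setminus\tau$ is a single index, say $\overline{\tau}=\{\ell\}$, and the single multi-order exponent is the number $s=s_{\ell}=|A_{\sigma}^{-1}a_{\ell}|$; so ``Gevrey of multi-order $(s)$ along $Y_{\tau}$'' and ``Gevrey of order $s$ along $Y_{\tau}$'' coincide here, and the sheaves $\cO_{X|Y_{\tau}}(s)$ and $\cO_{X|Y_{\tau}}(<s)$ from Section \ref{section-general} are exactly the ones appearing in Proposition \ref{dim-gevrey-simplex}. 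Since we are assuming $s>1$, Lemma \ref{lema-slope} is applicable.

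Next I would extract from Proposition \ref{dim-gevrey-simplex} a single series witness: since $\operatorname{vol}_{\Z A}(\Delta_{\sigma})=[\Z A:\Z\sigma]\geq 1$, the proposition guarantees that for every $p\in Y_{\tau}\cap U_{\sigma}$ the space $(\mathcal{H}om_{\D}(\M_A(\beta),\cO_{X|Y_{\tau}}(s)/\cO_{X|Y_{\tau}}(<s)))_p$ is nonzero, so there is a series $f\in\cO_{X|Y_{\tau}}(s)_p$ whose class in $(\cO_{X|Y_{\tau}}(s)/\cO_{X|Y_{\tau}}(<s))_p$ is a nonzero solution of $\M_A(\beta)$; in particular $f$ has Gevrey index exactly $s>1$ (a class being nonzero in the quotient means precisely that $\rho_{s'}^{\tau}(f)$ fails to converge for every $s'<s$). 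One concrete choice is $f=\phi_{\sigma}^{\mathbf{k}}$ for a suitable $\mathbf{k}$, using Theorem \ref{GevreyI}, but the abstract nonvanishing is all that is needed. Taking $U=Y_{\tau}\cap U_{\sigma}$, which is a relatively open subset of the hypersurface $Y_{\tau}$, the hypotheses of Lemma \ref{lema-slope} are verified: a Gevrey series $f$ of index $s>1$ exists at each $p\in U$ whose class in $(\cO_{X|Y}(s)/\cO_{X|Y}(<s))_p$ solves $\M_A(\beta)$.

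Finally I would invoke Lemma \ref{lema-slope} to conclude that $s$ is a slope of $\M_A(\beta)$ along $Y_{\tau}$ at every point in the closure of $U=Y_{\tau}\cap U_{\sigma}$, which is exactly the assertion of the corollary. I expect no serious obstacle here; the only point requiring a line of care is confirming that the ``modulo convergent and lower-index series'' solution produced in Proposition \ref{dim-gevrey-simplex} is genuinely of Gevrey \emph{index} $s$ and not merely of order $s$ with a smaller index — but this is automatic from the definition of the quotient sheaf $\cO_{X|Y}(s)/\cO_{X|Y}(<s)$, since a nonzero class there cannot be represented by a series of index $<s$. (One should note in passing that ``analytic slope'' in Definition \ref{trans-slope} is stated via the analytic closure of a locus where the Gevrey filtration jumps, and that the conclusion of Lemma \ref{lema-slope} — $s$ being a slope at every point of $\overline{U}$ — matches this definition; Remark \ref{nota-slope} even upgrades this to all of $Y_{\tau}$ when $Y_{\tau}$ is an irreducible algebraic hypersurface, though the corollary as stated only claims the closure of $Y_{\tau}\cap U_{\sigma}$.)
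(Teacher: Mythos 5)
Your proof is correct and matches the paper's approach exactly: the paper derives the corollary simply by citing Proposition \ref{dim-gevrey-simplex} together with Lemma \ref{lema-slope}, and you correctly fill in the two small translation steps (that order and multi-order coincide when $\overline{\tau}$ is a singleton, and that a nonzero class in $(\cO_{X|Y}(s)/\cO_{X|Y}(<s))_p$ has any representative of Gevrey index exactly $s$). One minor slip in your parenthetical aside: Theorem \ref{GevreyI} guarantees that $\phi_{\sigma}^{\mathbf{k}}$ has Gevrey index $s$ only for \emph{very generic} $\beta$; for arbitrary $\beta$ the concrete witness is the series $\phi_{\mathbf{k},\eta_{\mathbf{k}}}$ produced in Lemma \ref{series-indice-adecuado}, which is precisely what Proposition \ref{dim-gevrey-simplex} encapsulates — but, as you note, the abstract nonvanishing is all the argument uses, so this does not affect correctness.
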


\begin{nota}
By Theorem \ref{GevreyI} we only need Lemma \ref{lema-slope} for the
proof of Corollary \ref{analytic-slope} if $\beta$ is not very
generic.
\end{nota}

\begin{nota}
Observe that $\mathbf{0}$ is in the closure of $Y_{\tau}\cap
U_{\sigma}$. However, by Remark \ref{nota-slope} we have that $s$ is
a slope along $Y_{\tau}$ at any point of $Y_{\tau}$.
\end{nota}

\begin{nota}
Corollary \ref{analytic-slope} uses that $Y_{\tau}$ has codimension
one because the analytic slopes along $Y_{\tau}$ are not defined if
the codimension is greater. However, a good definition for the
analytic slopes of a holonomic $\D$-module along a variety $Z$ of
codimension greater than $1$ should include the indices of the
Gevrey series solutions of $\M_A (\beta )$ along $Z$. One reason is
that $\{\mathcal{H}^0 (\operatorname{Irr}_Z^{(s)}(\M))\}_{s\geq 1}$
determines a Gevrey filtration of $\mathcal{H}^0
(\operatorname{Irr}_Z(\M))$, although
$\{\operatorname{Irr}_Z^{(s)}(\M)\}_{s\geq 1}$ does not determine in
general a filtration of $\operatorname{Irr}_Z(\M)$.

\end{nota}

Let us proceed with the construction of the announced series and the
proof of Proposition \ref{dim-gevrey-simplex}.

\vspace{.3cm}

We identify $\mathbf{k}+\mathbf{m} \in \N^{n-d}$ with
$v^{\mathbf{k}+\mathbf{m}}=(A_{\sigma}^{-1}(\beta -
A_{\overline{\sigma}}(\mathbf{k}+\mathbf{m})),
\mathbf{k}+\mathbf{m}) \in \C^{d}\times \N^{n-d}$ and establish a
partition of $\Lambda_{\mathbf{k}} $ in terms of the negative
support of the vector $v^{\mathbf{k}+\mathbf{m}} \in \C^{d}\times
\N^{n-d}$ as follows. For any subset $\eta \subseteq \sigma$ set:

$$\Lambda_{\mathbf{k},\eta }:=\{\mathbf{k}+\mathbf{m} \in \Lambda_{\mathbf{k}} :\; \mbox{nsupp}(A_{\sigma}^{-1}(\beta
-A_{\overline{\sigma}}(\mathbf{k}+\mathbf{m})))=\eta \}.$$ Consider
the set $$\Omega_{\mathbf{k}} := \{ \eta \subseteq \sigma : \;
\Lambda_{\mathbf{k},\eta} \neq \emptyset \}.$$

Then it is clear that $\{\Lambda_{\mathbf{k},\eta }: \eta \in
\Omega_{\mathbf{k}}\} $ is a partition of $\Lambda_{\mathbf{k}}$.
Moreover $\Lambda_{\mathbf{k},\eta }$ is the intersection of a
polytope with $\Lambda_{\mathbf{k}}$ because the conditions
$$\mbox{nsupp}(A_{\sigma}^{-1}(\beta
-A_{\overline{\sigma}}(\mathbf{k}+\mathbf{m})))=\eta$$ are
equivalent to inequalities of type:

$$(A_{\sigma}^{-1}(\beta
-A_{\overline{\sigma}}(\mathbf{k}+\mathbf{m})))_i < 0  $$ for $i\in
\eta$ and
$$(A_{\sigma}^{-1}(\beta
-A_{\overline{\sigma}}(\mathbf{k}+\mathbf{m})))_j \geq 0 $$ for $j
\notin \eta$ such that $(A_{\sigma}^{-1}(\beta
-A_{\overline{\sigma}}\mathbf{k}))_j \in \Z$.

\vspace{.3cm}

For any $\eta \in \Omega_{\mathbf{k}}$ the series
$\phi_{v^{\mathbf{k}+\mathbf{m}}}$ for $ \mathbf{k}+\mathbf{m} \in
\Lambda_{\mathbf{k},\eta} $ depends on $\Lambda_{\mathbf{k},\eta} $
but not on $\mathbf{k}+\mathbf{m}\in \Lambda_{\mathbf{k},\eta} $ up
to multiplication by nonzero scalars. Let us fix any $
\widetilde{\mathbf{k}} \in \Lambda_{\mathbf{k},\eta}$ and define:

$$\phi_{\mathbf{k},\eta}:=\phi_{v^{\widetilde{\mathbf{k}}}}.$$ Observe
that the support of the series $\phi_{\mathbf{k},\eta}$ is:
$$\mbox{supp}(\phi_{\mathbf{k},\eta})=\{v^{\mathbf{k}+\mathbf{m}} :
\; \mathbf{k}+\mathbf{m} \in \Lambda_{\mathbf{k},\eta }\}.$$

\vspace{.3cm}

If the set $\Omega_{\mathbf{k}}$ has only one element $\eta$ then
$\Lambda_{\mathbf{k}} =\Lambda_{\mathbf{k},\eta }$ and the series
$\phi_{\sigma}^{\mathbf{k}}=\phi_{\mathbf{k},\eta}$ is a nonzero
Gevrey series solution of $\M_A (\beta) $ in
$\cO_{X|Y_{\tau}}(s)\setminus \cO_{X|Y_{\tau}}(<s)$ at any point of
$Y_{\tau}\cap U_{\sigma}$ (see Theorem \ref{GevreyI}).

\vspace{.3cm}

All the series in the finite set $\{ \phi_{\mathbf{k},\eta} :
\mathbf{k}\in \N^{n-d}, \eta \in \Omega_{\mathbf{k}} \} $ are Gevrey
series along $Y_{\sigma}=\{x_i = 0 : \; i\notin \sigma \}$ with
multi-order $\mathbf{s}$ at points of $Y_{\sigma}\cap \{x_j \neq 0:
\; j \in \sigma \}$. In fact, these series are Gevrey of order $s$
along $Y_{\tau}$ at any point of $Y_{\tau}\cap U_{\sigma }$ and they
are all annihilated by the Euler operators.

\vspace{.3cm}

For all $\eta\in \Omega_{\mathbf{k}}$, the support of the series
$\phi_{\mathbf{k}, \eta }$ is $\operatorname{supp}(\phi_{\mathbf{k},
\eta })=\{v^{\mathbf{k}+\mathbf{m}}: \; \mathbf{k}+\mathbf{m} \in
\Lambda_{\mathbf{k},\eta } \} $ and $\cup_{\eta \in
\Omega_{\mathbf{k}}} \Lambda_{\mathbf{k},\eta}
=\Lambda_{\mathbf{k}}$. Then there exists $\eta \in
\Omega_{\mathbf{k}}$ such that $\phi_{\mathbf{k}, \eta }\in
\cO_{X|Y_{\tau}}(s)$ has Gevrey index $s$. But a series $\phi_v $ is
annihilated by $I_A$ if and only if $v$ has minimal negative support
(see \cite{SST}, Section 3.4.) so if we take $\eta '\in
\Omega_{\mathbf{k}}$ with minimal cardinal then
$\phi_{\mathbf{k},\eta '}\in \cO_{X|Y_{\tau}}(s)$ is a solution of
$\M_A (\beta )$. In general, we cannot take $\eta = \eta '$.

\vspace{.3cm}

The following Lemma is the key of the proof of Proposition
\ref{dim-gevrey-simplex}.

\begin{lema}\label{series-indice-adecuado}
Consider an element $\eta$ of the set
$$\{ \eta' \in \Omega_{\mathbf{k}}: \;
\phi_{\mathbf{k},\eta '} \mbox{ has Gevrey index } s \}$$ with
minimal cardinal. Then $\Box_{u} (\phi_{\mathbf{k} ,\eta} ) \in
\cO_{X|Y_{\tau}}(<s)$ for all $u\in L_A$.
\end{lema}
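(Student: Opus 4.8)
The statement to prove is: if $\eta \in \Omega_{\mathbf{k}}$ has minimal cardinal among those $\eta'$ with $\phi_{\mathbf{k},\eta'}$ of Gevrey index $s$, then $\Box_u(\phi_{\mathbf{k},\eta}) \in \cO_{X|Y_\tau}(<s)$ for every $u \in L_A$. The plan is to compute $\Box_u(\phi_{\mathbf{k},\eta})$ termwise and show that the result, written as a series supported on $\N^{n-d}$, decomposes into pieces each of which is genuinely of lower Gevrey index along $Y_\tau$. The crucial point is that $\phi_v$ is annihilated by $\Box_u$ whenever the negative support of $v$ is \emph{minimal}; the only obstruction to $\Box_u(\phi_{\mathbf{k},\eta}) = 0$ comes from the non-minimality of $\eta$, i.e. from the existence of $\eta'' \in \Omega_{\mathbf{k}}$ with $\eta'' \subsetneq \eta$. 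So morally $\Box_u(\phi_{\mathbf{k},\eta})$ is a combination of series $\phi_{\mathbf{k},\eta''}$ with $\eta'' \subsetneq \eta$ (up to reindexing by $u$), and by minimality of $\eta$ in the stated set, each such $\phi_{\mathbf{k},\eta''}$ has Gevrey index strictly less than $s$ — hence lies in $\cO_{X|Y_\tau}(<s)$.

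First I would recall from \cite{SST}, Section 3.4, the precise mechanism: for $v \in \C^n$, $\Box_u(\phi_v)$ is supported on those $v + u(\mathbf{m})$ for which passing from $v$ to $v+u$ strictly shrinks the negative support, and in fact $\Box_u(\phi_v)$ can be rewritten as a (scalar multiple of a) series $\phi_{v'}$ with $\operatorname{nsupp}(v') \subsetneq \operatorname{nsupp}(v)$, or a sum of such. Applying this to each $v^{\widetilde{\mathbf{k}}}$ with $\widetilde{\mathbf{k}} \in \Lambda_{\mathbf{k},\eta}$, and using that $\operatorname{nsupp}(v^{\mathbf{k}+\mathbf{m}}) = \eta$ on $\Lambda_{\mathbf{k},\eta}$, I would express $\Box_u(\phi_{\mathbf{k},\eta})$ as a finite $\C$-linear combination $\sum_{\eta'' } c_{\eta''}\, \phi_{\mathbf{k},\eta''}$ where each $\eta''$ appearing satisfies $\eta'' \subsetneq \eta$ (the new negative supports are proper subsets of $\eta$, and they all lie over the same coset $[A_{\overline\sigma}\mathbf{k}]$, hence over the same $\Lambda_{\mathbf{k}}$, so they are among the $\Lambda_{\mathbf{k},\eta''}$ with $\eta'' \in \Omega_{\mathbf{k}}$). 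Here I use Remark~\ref{disjoint-supports} / Remark~\ref{partition}: the $\Lambda_{\mathbf{k},\eta''}$ partition $\Lambda_{\mathbf{k}}$, so these $\phi_{\mathbf{k},\eta''}$ are exactly the building blocks we already have names for.

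Next, for each $\eta''$ occurring with $c_{\eta''} \neq 0$: since $\eta'' \subsetneq \eta$ we have $|\eta''| < |\eta|$, so by minimality of $|\eta|$ in the set $\{\eta' \in \Omega_{\mathbf{k}} : \phi_{\mathbf{k},\eta'} \text{ has Gevrey index } s\}$, the series $\phi_{\mathbf{k},\eta''}$ does \emph{not} have Gevrey index $s$. On the other hand, $\phi_{\mathbf{k},\eta''}$ is one of the series discussed just before the Lemma: it is Gevrey of order $s$ along $Y_\tau$ at points of $Y_\tau \cap U_\sigma$ (this follows from the multi-order bound $\mathbf{s}=(s_i)$ together with the argument leading to Theorem~\ref{GevreyI}, which applies to $\phi_{v^{\widetilde{\mathbf{k}}}}$ regardless of minimality of negative support, cf. the Remark after Theorem~\ref{GevreyI}). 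A series that is Gevrey of order $s$ along $Y_\tau$ but does not have Gevrey index $s$ must have Gevrey index $< s$, i.e. lies in $\cO_{X|Y_\tau}(<s)$. Summing, $\Box_u(\phi_{\mathbf{k},\eta}) = \sum_{\eta''} c_{\eta''}\phi_{\mathbf{k},\eta''} \in \cO_{X|Y_\tau}(<s)$, which is what we want. (The case $\Box_u(\phi_{\mathbf{k},\eta}) = 0$, which happens when $\eta$ is already minimal in $\Omega_{\mathbf{k}}$, is trivially covered.)

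The main obstacle I anticipate is the first bookkeeping step: making precise, from the combinatorics of $N_v$, $S_{\mathbf{k}}$ and the Pochhammer coefficients $[v]_{u_-}/[v+u]_{u_+}$, that $\Box_u(\phi_{\mathbf{k},\eta})$ really is a \emph{finite} combination of the $\phi_{\mathbf{k},\eta''}$ with $\eta'' \subsetneq \eta$ and no other terms survive — in particular that applying $\Box_u$ does not produce terms whose negative support is incomparable to $\eta$ or equal to $\eta$. This is where one must use carefully that $u(\mathbf{m})$ has disjoint positive and negative parts and that differentiating a monomial $x^{v}$ with $v_i \in \Z_{<0}$ contributes a zero coefficient exactly in the manner encoded by the minimal-negative-support criterion of \cite{SST}. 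Once that identity is in hand, the Gevrey-index comparison is immediate from the minimality hypothesis and Theorem~\ref{GevreyI}.
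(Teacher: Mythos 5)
Your overall strategy matches the paper's: compute $\Box_u(\phi_{\mathbf{k},\eta})$ termwise, observe that the surviving terms are governed by negative supports $\varsigma\subsetneq\eta$, and invoke the minimal-cardinality hypothesis to conclude these pieces have Gevrey index $<s$. However, the central bookkeeping claim you pivot on --- that $\Box_u(\phi_{\mathbf{k},\eta})$ is a \emph{finite $\C$-linear combination} $\sum_{\eta''}c_{\eta''}\phi_{\mathbf{k},\eta''}$ with scalar $c_{\eta''}$ --- is not available, and this is a genuine gap, not merely a bookkeeping obstacle you can postpone.

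Two things go wrong with that claim. First, the monomials appearing in $\Box_u(\phi_{\mathbf{k},\eta})$ have exponents of the form $v^{\mathbf{k}+\mathbf{m}}-u_-=v^{\mathbf{k}+\mathbf{m}+\widetilde{\mathbf{m}}}-u_+$, not of the form $v^{\mathbf{k}'}$; these exponents do not belong to the support of any $\phi_{\mathbf{k},\eta''}$, so no scalar linear combination of the $\phi_{\mathbf{k},\eta''}$ can equal $\Box_u(\phi_{\mathbf{k},\eta})$. Second, even after accounting for the shift, the coefficients do not match up to a single scalar: at the exponent $v^{\mathbf{k}+\mathbf{m}}-u_-$ the surviving coefficient is $c_{\mathbf{k}+\mathbf{m}}[v^{\mathbf{k}+\mathbf{m}}]_{u_-}$ (or the analogous $u_+$-piece), and this differs from the coefficient of the corresponding term of $\phi_{\mathbf{k},\varsigma}$ by a factor that is a \emph{rational function of $\mathbf{m}$}, not a constant. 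Related to this, the SST-style principle you invoke ("$\Box_u(\phi_v)$ can be rewritten as a sum of $\phi_{v'}$ with strictly smaller negative support") is not actually a statement in \cite{SST}; what is true is only that $\phi_v$ is annihilated by $I_A$ iff $v$ has minimal negative support, and that when applied to a monomial, $\Box_u$ either kills it (by a vanishing Pochhammer symbol) or does not, depending on sign patterns of $v$ and $u$.

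The paper's proof of Lemma~\ref{series-indice-adecuado} gets around both issues and is worth comparing: it does the same case analysis you sketch (cancellation when both indices lie in $\Lambda_{\mathbf{k},\eta}$; vanishing by $[v]_{u_-}=0$ when a coordinate turns negative; survival exactly when the negative support strictly shrinks to some $\varsigma\subsetneq\eta$), but then, instead of claiming an exact linear combination, it writes $\Box_u(\phi_{\mathbf{k},\eta})$ as a sum of series supported on translates of subsets of $\Lambda_{\mathbf{k},\varsigma}$ whose coefficients differ from those of $\phi_{\mathbf{k},\varsigma}$ by rational functions of $\mathbf{m}$. The key additional ingredient --- which your argument is missing --- is that multiplying coefficients by a rational function of $\mathbf{m}$, shifting exponents by a fixed vector, and passing to a subseries all preserve (or do not increase) the Gevrey index. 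Once you have that comparison lemma, the minimality of $\eta$ delivers the bound $<s$ exactly as you describe. So your conclusion and the final step are sound, but the decomposition you propose in the middle should be replaced by this softer "Gevrey index comparison modulo rational functions in $\mathbf{m}$" argument.

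One more small point: the cancellation in the case where both $\mathbf{k}+\mathbf{m}$ and $\mathbf{k}+\mathbf{m}+\widetilde{\mathbf{m}}$ lie in $\Lambda_{\mathbf{k},\eta}$ is not automatic; it uses precisely the Pochhammer recursion $c_{\mathbf{k}+\mathbf{m}+\widetilde{\mathbf{m}}}[v^{\mathbf{k}+\mathbf{m}+\widetilde{\mathbf{m}}}]_{u_+}=c_{\mathbf{k}+\mathbf{m}}[v^{\mathbf{k}+\mathbf{m}}]_{u_-}$ built into the definition of $\phi_v$. You should state this explicitly rather than fold it into "morally $\Box_u(\phi_{\mathbf{k},\eta})$ is a combination of...".
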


\begin{proof}
Consider $\Lambda_{\mathbf{k},\eta}$ with $\eta$ as above and $u\in
L_A$. Then there exists $\mathbf{\widetilde{m}}\in \Z^{n-d}$ such
that
$u=(-A_{\sigma}^{-1}A_{\overline{\sigma}}\mathbf{\widetilde{m}},\mathbf{\widetilde{m}})$
and then $$\Box_u =
\partial_{\sigma}^{(A_{\sigma}^{-1}A_{\overline{\sigma}}\mathbf{\widetilde{m}})_{-}}
\partial_{\sigma}^{\mathbf{\widetilde{m}}_{+}}-\partial_{\sigma }^{(A_{\sigma}^{-1}A_{\overline{\sigma}}\mathbf{\widetilde{m}})_{+}}
\partial_{\sigma}^{\mathbf{\widetilde{m}}_{-}}.$$

On the other hand, the series $\phi_{\mathbf{k}, \eta }$ has the
form:

$$\phi_{\mathbf{k},\eta}=\sum_{\mathbf{k}+\mathbf{m}\in \Lambda_{\mathbf{k},\eta}} c_{\mathbf{k}+\mathbf{m}}
x_{\sigma}^{A_{\sigma}^{-1} ( \beta - A_{\overline{\sigma}}
(\mathbf{k}+\mathbf{m}))}
x_{\overline{\sigma}}^{\mathbf{k}+\mathbf{m}}$$ where
$c_{\mathbf{k}+\mathbf{m}}\in \C$ verifies that
$c_{\mathbf{k}+\mathbf{m}+\mathbf{\widetilde{m}}}/c_{\mathbf{k}+\mathbf{m}}$
is a rational function on $\mathbf{m}$ (recall that there exists
$\mathbf{\widetilde{k}}\in \Lambda_{k,\eta}$ such that
$c_{\mathbf{k}+\mathbf{m}}=\frac{[v^{\mathbf{\widetilde{k}}}]_{u(\mathbf{k}-\mathbf{\widetilde{k}+\mathbf{m}})_{-}}}{[v^{\mathbf{k}+\mathbf{m}}]_{u(\mathbf{k}-\mathbf{\widetilde{k}+\mathbf{m}})_{+}}}$
by definition of $\phi_{\mathbf{k},\eta}$).

\vspace{.3cm}

A monomial $x^{v^{\mathbf{k}+\mathbf{m}}-u_{-}}=
x^{v^{\mathbf{k}+\mathbf{m}+\mathbf{\widetilde{m}}}-u_{+}}$
appearing in $\Box_u (\phi_{\mathbf{k},\eta})$ comes from the
monomials $x^{v^{\mathbf{k}+\mathbf{m}}}$ and
$x^{v^{\mathbf{k}+\mathbf{m}+\mathbf{\mathbf{\widetilde{m}}}}}$
after one applies $\partial^{u_{-}}$ and $\partial^{u_{+}}$
respectively.

\vspace{.3cm}

If $\mathbf{k}+\mathbf{m} ,
\mathbf{k}+\mathbf{m}+\mathbf{\widetilde{m}} \in
\Lambda_{\mathbf{k},\eta}$ then the monomial
$x^{v^{\mathbf{k}+\mathbf{m}}-u_{-}}$ appears in
$\partial^{u_{-}}(\phi_{\mathbf{k},\eta})$ and
$\partial^{u_{+}}(\phi_{\mathbf{k},\eta})$ with the same
coefficients so it doesn't appear in the difference.

\vspace{.3cm}

If $\mathbf{k}+\mathbf{m} \in \Lambda_{\mathbf{k},\eta}$ but
$\mathbf{k}+\mathbf{m}+\mathbf{\widetilde{m}} \notin
\Lambda_{\mathbf{k},\eta}$ (the case $\mathbf{k}+\mathbf{m} \notin
\Lambda_{\mathbf{k},\eta}$ but
$\mathbf{k}+\mathbf{m}+\mathbf{\widetilde{m}} \in
\Lambda_{\mathbf{k},\eta}$ is analogous), we can distinguish two
cases:

\begin{enumerate}
\item[1)] There exists $i$ such that $v^{\mathbf{k}+\mathbf{m}}_{i}\in
\N$ but $v^{\mathbf{k}+\mathbf{m}+\mathbf{\widetilde{m}}}_{i}<0$ so
$u_i =
v^{\mathbf{k}+\mathbf{m}+\mathbf{\widetilde{m}}}_{i}-v^{\mathbf{k}+\mathbf{m}
}_{i}<0$. Then $\partial^{u_{-}}(x^{v^{\mathbf{k}+\mathbf{m}}})=0$
and $x^{v^{\mathbf{k}+\mathbf{m}}-u_{-}}$ does not appear in $\Box_u
(\phi_{\mathbf{k},\eta })$.

\item[2)] We have $\mbox{nsupp}(v^{\mathbf{k}+\mathbf{m}+\mathbf{\widetilde{m}}})=\varsigma \subsetneq
\mbox{nsupp}(v^{\mathbf{k}+\mathbf{m}})=\eta$. Then
$[v^{\mathbf{k}+\mathbf{m}}]_{u_{-}}\neq 0$ and the coefficient of
$x^{v^{\mathbf{k}+\mathbf{m}}-u_{-}}$ in $\Box_u
(\phi_{\mathbf{k},\eta})$ is $c_{\mathbf{k}+\mathbf{m}}
[v^{\mathbf{k}+\mathbf{m}}]_{u_{-}}\neq 0$. Furthermore,
$\mathbf{k}+\mathbf{m}+\mathbf{\widetilde{m}} \in
\Lambda_{\mathbf{k},\varsigma}$ with $\varsigma \in
\Omega_{\mathbf{k}}$ such that $\phi_{\mathbf{k},\varsigma }$ is
Gevrey of index $s' < s$ because we chose $\eta$ that way.
\end{enumerate}

By 1), 2) and the analogous cases when $\mathbf{k}+\mathbf{m} \notin
\Lambda_{\mathbf{k},\eta}$ but
$\mathbf{k}+\mathbf{m}+\mathbf{\widetilde{m}} \in
\Lambda_{\mathbf{k},\eta}$, we have:

$$\Box_{u}(\phi_{\mathbf{k},\eta})=\displaystyle \sum_{\varsigma '}
\sum_{\stackrel{\mathbf{k}+\mathbf{m}+\mathbf{\widetilde{m}} \in
\Lambda_{\mathbf{k},\eta}}{\mathbf{k}+\mathbf{m}\in
\Lambda_{\mathbf{k},\varsigma '}}}
c_{\mathbf{k}+\mathbf{m}+\mathbf{\widetilde{m}}}
[v^{\mathbf{k}+\mathbf{m}+\mathbf{\widetilde{m}}}]_{u_{+}}
x^{v^{\mathbf{k}+\mathbf{m}+\mathbf{\widetilde{m}}}-u_{+}} -$$

\begin{equation}
- \sum_{\varsigma} \sum_{\stackrel{\mathbf{k}+\mathbf{m} \in
\Lambda_{\mathbf{k},\eta}}{\mathbf{k}+\mathbf{m}+\mathbf{\widetilde{m}}\in
\Lambda_{\mathbf{k},\varsigma}}} c_{\mathbf{k}+\mathbf{m}}
[v^{\mathbf{k}+\mathbf{m}}]_{u_{-}}
x^{v^{\mathbf{k}+\mathbf{m}}-u_{-}}
\end{equation} Here, $\varsigma , \varsigma ' \subseteq \eta $ varies in a subset of the finite set
$\Omega_{\mathbf{k}}$ whose elements $\varsigma ''$ verify that the
series $\phi_{\mathbf{k},\varsigma ''  }$ has Gevrey index $s''< s$.
Let us denote by $\widetilde{s}<s$ the maximum of these $s''$.

\vspace{.3cm}

Since
$c_{\mathbf{k}+\mathbf{m}+\mathbf{\widetilde{m}}}/c_{\mathbf{k}+\mathbf{m}}
$, $[v^{\mathbf{k}+\mathbf{m}}]_{u_{-}}$ and
$[v^{\mathbf{k}+\mathbf{m}+\mathbf{\widetilde{m}}}]_{u_{+}}$ are
rational functions on $\mathbf{m}$ the series
$\Box_{u}(\phi_{\mathbf{k},\eta})$ has Gevrey index at most the
maximum of the Gevrey index of the series

$$ \sum_{\varsigma '}
\sum_{\stackrel{\mathbf{k}+\mathbf{m}+\mathbf{\widetilde{m}} \in
\Lambda_{\mathbf{k},\eta}}{\mathbf{k}+\mathbf{m}\in
\Lambda_{\mathbf{k},\varsigma '}}} c_{\mathbf{k}+\mathbf{m}}
x^{v^{\mathbf{k}+\mathbf{m}+\mathbf{\widetilde{m}}}-u_{+}}
$$ $$ \sum_{\varsigma} \sum_{\stackrel{\mathbf{k}+\mathbf{m} \in
\Lambda_{\mathbf{k},\eta}}{\mathbf{k}+\mathbf{m}+\mathbf{\widetilde{m}}\in
\Lambda_{\mathbf{k},\varsigma}}}
c_{\mathbf{k}+\mathbf{m}+\mathbf{\widetilde{m}}}
x^{v^{\mathbf{k}+\mathbf{m}}-u_{-}}
$$ which is at most $\widetilde{s} < s$.

\vspace{.3cm}

It follows that $I_A (\phi_{\mathbf{k},\eta })\in
\cO_{X|Y_{\tau}}(<s)$ while $\phi_{\mathbf{k},\eta}$ has Gevrey
index $s$.
\end{proof}

Moreover the classes of the series $\{\phi_{\mathbf{k}
,\eta_{\mathbf{k}}}: \; \mathbf{k}  \in \N^{n-d} \}$ (with
$\eta_{\mathbf{k}} \in \Omega_{\mathbf{k}}$ chosen as $\eta$ in
Lemma \ref{series-indice-adecuado}) in
$(\cO_{X|Y_{\tau}}(s)/\cO_{X|Y_{\tau}}(<s))_p$, $p\in Y_{\tau}\cap
U_{\sigma}$, are linearly independent since the support of
$\phi_{\mathbf{k},\eta}$ restricted to the variables $x_i$ with
$i\notin \sigma$ is $\Lambda_{\mathbf{k},\eta_{\mathbf{k}}}\subseteq
\Lambda_{\mathbf{k}}$ and $\{\Lambda_{\mathbf{k}}: \; \mathbf{k}\in
\N^{n-d}\}$ is a partition of $\N^{n-d}$. This finishes the proof of
Proposition \ref{dim-gevrey-simplex}.

\section{Slopes of $\M_A (\beta )$ along coordinate hyperplanes}\label{slopes-hyperplanes}

In this section we will describe all the slopes of $\M_A (\beta )$
along coordinate hyperplanes. First, we recall here the definition
of $(A,L)$-umbrella \cite{SW}, but we will slightly modify the
notation in \cite{SW} for technical reasons. Consider any full rank
matrix $A=(a_1 \; \cdots \; a_n )\in \Z^{d\times n}$ and
$\mathbf{s}=(s_1 ,\ldots , s_n ) \in \R_{>0}^n$.

\begin{defi}
Set $a_j^{\mathbf{s}}:=a_j/s_j$, $j=1,\ldots ,n$, and let
$$\Delta_A^{\mathbf{s}}:=\operatorname{conv}(\{a_i^{\mathbf{s}} :\; i=1,\ldots ,n\}\cup \{\mathbf{0}\}) $$
be the so-called $(A, \mathbf{s})$-polyhedron.

\vspace{.3cm}

The $(A,\mathbf{s})$-umbrella is the set $\Phi_A^{\mathbf{s}}$ of
faces of $\Delta_A^{\mathbf{s}}$ which do not contain the origin.
$\Phi_A^{\mathbf{s},q}\subseteq \Phi_A^{\mathbf{s}}$ denotes the
subset of faces of dimension $q$ for $q=0,\ldots ,d-1$.
\end{defi}

The following statement is \cite[Lemma 2.13]{SW}. The difference
here is that we do not assume that $A$ is pointed but we just
consider $\mathbf{s} \in \R^{n}$ such that $s_i >0$ for all
$i=1,\ldots ,n$. For this reason we slightly modify a part of the
proof of \cite[Lemma 2.13]{SW}.

\begin{lema}\label{radinL}
Let $\widetilde{I}_A^{\mathbf{s}}$ be the ideal of $\C [\xi_1
,\ldots ,\xi_n ]$ generated by the following elements:

\begin{enumerate}
\item[i)] $\xi_{i_1} \cdots \xi_{i_r }$  where $a_{i_1}/s_{i_1} ,\ldots , a_{i_{r} }/s_{i_r}$ do not lie in a common
facet of $\Phi_A^{\mathbf{s}}$.

\item[ii)] $\xi^{u_{+}}-\xi^{u_{-}}$ where $u\in \ker_{\Z} A$ and
$\operatorname{supp}(u)$ is contained in a facet of
$\Phi_A^{\mathbf{s}}$.
\end{enumerate}

Then
$\widetilde{I}_A^{\mathbf{s}}=\sqrt{\operatorname{in}_{\mathbf{s}}
(I_A )}$.

\end{lema}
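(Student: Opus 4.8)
The plan is to show the two inclusions $\widetilde{I}_A^{\mathbf{s}} \subseteq \sqrt{\operatorname{in}_{\mathbf{s}}(I_A)}$ and $\sqrt{\operatorname{in}_{\mathbf{s}}(I_A)} \subseteq \widetilde{I}_A^{\mathbf{s}}$ separately, following the structure of \cite[Lemma 2.13]{SW} but being careful that the only hypothesis available is $s_i > 0$ for all $i$ (so that $\Delta_A^{\mathbf{s}}$ is a full-dimensional polytope containing $\mathbf{0}$ in its interior exactly when $A$ is pointed, and otherwise $\mathbf{0}$ may lie on the boundary or $\Delta_A^{\mathbf{s}}$ may not be full-dimensional). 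The key observation is that $\operatorname{in}_{\mathbf{s}}(I_A)$ is the initial ideal of the toric ideal with respect to the weight vector $\mathbf{s}$, and its radical is a monomial-plus-binomial ideal whose combinatorics is governed by which subsets of columns lie in a common face of the $(A,\mathbf{s})$-polyhedron; this is precisely what the $(A,\mathbf{s})$-umbrella encodes.

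First I would establish $\widetilde{I}_A^{\mathbf{s}} \subseteq \sqrt{\operatorname{in}_{\mathbf{s}}(I_A)}$. For the binomial generators of type ii): if $u \in \ker_{\Z} A$ has support contained in a facet $F$ of $\Phi_A^{\mathbf{s}}$, then all the points $a_i/s_i$ for $i \in \operatorname{supp}(u)$ lie on the supporting hyperplane of $F$, which has the form $\{y : \langle c, y\rangle = 1\}$ for some $c \in \R^d$; hence $\sum_i c_i' u_i = 0$ where $c_i' = \langle c, a_i\rangle / s_i$... more directly, $\langle c, a_i \rangle = s_i$ for $i$ in the support, so the $\mathbf{s}$-weight of $\xi^{u_+}$ equals $\sum_{i} s_i (u_+)_i = \langle c, A u_+ \rangle = \langle c, A u_- \rangle = $ the $\mathbf{s}$-weight of $\xi^{u_-}$. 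Thus $\xi^{u_+} - \xi^{u_-}$ is $\mathbf{s}$-homogeneous and lies in $\operatorname{in}_{\mathbf{s}}(I_A)$ itself. For the monomial generators of type i): if $a_{i_1}/s_{i_1}, \ldots, a_{i_r}/s_{i_r}$ do not all lie in a common facet, I would argue that some power of $\xi_{i_1}\cdots\xi_{i_r}$ lies in $\operatorname{in}_{\mathbf{s}}(I_A)$. The standard argument (as in Sturmfels and in \cite{SW}) is that the radical of $\operatorname{in}_{\mathbf{s}}(I_A)$ has prime components indexed by the facets of $\Phi_A^{\mathbf{s}}$, each being $\langle \xi_j : a_j/s_j \notin F\rangle + (\text{toric ideal of } F)$; a squarefree monomial lies in the radical iff its support meets the complement of every facet, i.e. iff its support is not contained in any facet. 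This needs the description of the minimal primes of $\operatorname{in}_{\mathbf{s}}(I_A)$, which in turn rests on the fact that $\operatorname{in}_{\mathbf{s}}(I_A)$ is the defining ideal of the special fiber / the toric degeneration, whose components correspond to the facets of the polyhedral subdivision induced by $\mathbf{s}$ — here, since all $a_i$ are scaled individually, this subdivision is the face fan picture given by $\Delta_A^{\mathbf{s}}$.

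For the reverse inclusion $\sqrt{\operatorname{in}_{\mathbf{s}}(I_A)} \subseteq \widetilde{I}_A^{\mathbf{s}}$, I would show $\widetilde{I}_A^{\mathbf{s}}$ is radical and contains $\operatorname{in}_{\mathbf{s}}(I_A)$, or directly that every minimal prime over $\operatorname{in}_{\mathbf{s}}(I_A)$ contains $\widetilde{I}_A^{\mathbf{s}}$. Concretely: take a binomial $\xi^{w_+} - \xi^{w_-}$ with $w \in \ker_\Z A$, appearing in a reduced Gröbner basis of $I_A$, so that $\operatorname{in}_{\mathbf{s}}$ of it is either a binomial (when the $\mathbf{s}$-weights agree) or a monomial (when they differ). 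In the first case the common support argument shows $\operatorname{supp}(w)$ lies in a facet, so it is a type ii) generator. In the second case, if $\xi^{w_+}$ is the initial term, I must show $\xi^{w_+} \in \widetilde{I}_A^{\mathbf{s}}$; here one uses that the $\mathbf{s}$-weight of $w_+$ strictly exceeds that of $w_-$, which forces $\mathbf{0}$, $A w_+/(\text{weight})$ and the relevant $a_i/s_i$ to be in "convex position" in a way that prevents $\operatorname{supp}(w_+)$ from lying in a single facet — this is the geometric heart and the step I expect to be the main obstacle, precisely because without pointedness one must handle the cases where $\mathbf{0} \in \partial \Delta_A^{\mathbf{s}}$ or $\dim \Delta_A^{\mathbf{s}} < d$, which is exactly where \cite{SW} used pointedness and where the proof here must be modified. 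I would isolate this as a small lemma about the polytope $\Delta_A^{\mathbf{s}}$: a monomial $\xi^\alpha$ with $A\alpha \neq 0$ and $\mathbf{s}\cdot\alpha$ maximal among monomials of the same $A$-degree class has support not contained in any facet of $\Phi_A^{\mathbf{s}}$ — proven by observing that if $\operatorname{supp}(\alpha) \subseteq F$ with supporting functional $c$ (so $\langle c, a_i\rangle = s_i$ on $F$ and $\langle c, a_i\rangle \le s_i$ elsewhere, while $\langle c, \mathbf{0}\rangle = 0 < 1$ after normalization), then $\langle c, A\alpha\rangle = \mathbf{s}\cdot\alpha$, whereas any competing monomial $\xi^{\alpha'}$ with $A\alpha' = A\alpha$ has $\mathbf{s}\cdot\alpha' \ge \langle c, A\alpha'\rangle = \langle c, A\alpha\rangle = \mathbf{s}\cdot\alpha$, contradicting maximality unless $\alpha$ was not actually an initial monomial. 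Finally, since both ideals are generated by squarefree monomials together with binomials with squarefree-supported exponents in kernel directions, and $\widetilde{I}_A^{\mathbf{s}}$ is visibly radical (it is an intersection over facets $F$ of $\langle \xi_j : j \notin F\rangle + I_{A_F}$, each of which is prime), the two inclusions give the equality.
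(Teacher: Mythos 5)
Your two-inclusion skeleton and the observation that $\widetilde{I}_A^{\mathbf{s}}$ is radical (being the intersection over facets $F$ of the primes $\langle\xi_j : j\notin F\rangle + I_{A_F}$) match the paper's strategy, which itself just re-runs \cite[Lemma~2.8, 2.10, 2.12, 2.13]{SW} after removing the pointedness hypothesis. Your Gr\"obner-basis bookkeeping for $\operatorname{in}_{\mathbf{s}}(I_A)\subseteq\widetilde{I}_A^{\mathbf{s}}$ and the supporting-functional computation in your "small lemma" are essentially the paper's case analysis on kernel vectors $u$. But there are two genuine problems.

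The substantial gap is in the inclusion of the type~i) generators into $\sqrt{\operatorname{in}_{\mathbf{s}}(I_A)}$. You invoke "the standard argument that the radical of $\operatorname{in}_{\mathbf{s}}(I_A)$ has prime components indexed by the facets of $\Phi_A^{\mathbf{s}}$." That prime decomposition is, up to rephrasing, precisely what Lemma~\ref{radinL} asserts, so appealing to it here is circular; you even flag that you are leaning on an uncited structure theorem. The paper's proof instead gives a self-contained convex-geometry construction: since $a_{i_1}/s_{i_1},\dots,a_{i_r}/s_{i_r}$ do not lie in a common facet of $\Phi_A^{\mathbf{s}}$, one picks a nonzero point $\mathbf{a}$ in their convex hull admitting a $t>1$ with $t\mathbf{a}\in\Delta_A^{\mathbf{s}}$, writes $\mathbf{a}=\sum_j\epsilon_j a_{i_j}/s_{i_j}$ and $t\mathbf{a}=\sum_i\eta_i a_i/s_i$ as convex combinations, and then exhibits an explicit binomial $P\in I_A$ whose $\mathbf{s}$-initial form is a power of $\xi_{i_1}\cdots\xi_{i_r}$ (the $\mathbf{s}$-degree of the first monomial is $\lambda t$ while that of the second is $\lambda$, so the degrees strictly differ). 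That constructive step, carried out without pointedness, is the technical heart of the lemma and is exactly what your proposal omits.

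There is also a smaller logical error in your handling of the direction $\operatorname{in}_{\mathbf{s}}(I_A)\subseteq\widetilde{I}_A^{\mathbf{s}}$. You assert that when $\mathbf{s}\cdot w_{+}=\mathbf{s}\cdot w_{-}$, the support of $w$ lies in a facet, so the binomial is a type~ii) generator. The correct implication is the reverse (support in a face forces equal weights); equal weights do not force the support into a facet. For instance, with columns $\pm e_1,\pm e_2$ and $\mathbf{s}=\mathbf{1}$, the kernel vector $w=(1,1,-1,-1)$ has $\mathbf{s}\cdot w_{+}=\mathbf{s}\cdot w_{-}=2$ but $\operatorname{supp}(w)=\{1,2,3,4\}$ lies in no facet of $\Phi_A^{\mathbf{s}}$. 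The case is recoverable — one checks that then neither $\operatorname{supp}(w_{+})$ nor $\operatorname{supp}(w_{-})$ lies in a facet, so both $\xi^{w_{+}}$ and $\xi^{w_{-}}$ are already multiples of type~i) generators — and this is exactly the extra case the paper's proof enumerates explicitly; but as written your proposal misses it. Finally, note that you locate the non-pointedness difficulty in this direction, whereas the paper's real work in dropping pointedness is concentrated in the convex-geometry argument above.
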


\begin{proof}
The proofs of \cite[Lemma 2.12]{SW} and \cite[Lemma 2.13]{SW} use
\cite[Lemma 2.8]{SW} and \cite[Lemma 2.10]{SW}, but they do no use
that $A$ is pointed elsewhere. We rewrite here the proofs of
\cite[Lemma 2.8]{SW} and \cite[Lemma 2.10]{SW} without the pointed
assumption on $A$ but for $\mathbf{s}\in \mathbb{R}^n_{>0}$. Let us
prove in particular that $\operatorname{in}_{\mathbf{s} } (I_A
)\subseteq \widetilde{I}_A^{\mathbf{s}} \subseteq
\sqrt{\operatorname{in}_{\mathbf{s}} (I_A )}$.

\vspace{.3cm}

For the proof of the inclusion $\operatorname{in}_{\mathbf{s}} (I_A
)\subseteq \widetilde{I}_A^{\mathbf{s} }$ we only need to prove that
$\forall u\in \Z^n $ with $Au=0$ then
$\operatorname{in}_{\mathbf{s}}
(\Box_{u})\in\widetilde{I}_A^{\mathbf{s}}$.

\vspace{.3cm}

If $\operatorname{supp}(u)\subseteq \tau \in \Phi_A^{\mathbf{s}}$
then $\exists h_{\tau} \in \mathbb{Q}^d$ such that $\langle
h_{\tau}, a_i/s_i \rangle =1, \;\forall i\in \tau$, i.e., $\langle
h_{\tau}, a_i \rangle=s_i , \;\forall i\in \tau$. Since $A u=0$ and
$\operatorname{supp}(u)\subseteq \tau$ we have $$0=\langle h_{\tau},
Au \rangle =\langle h_{\tau}A , u \rangle =\sum_{i \in \tau} s_i u_i
=\sum_{i=1}^n s_i u_i $$ so $\operatorname{in}_{\mathbf{s }}(\Box_u
)={\xi}^{u_{+} }-{\xi}^{u_{-}}$ which lies in
$\widetilde{I}_A^{\mathbf{s}}$ by definition.

\vspace{.3cm}

Assume there exists $\tau \in \Phi_A^{\mathbf{s}}$ such that
$\operatorname{supp}(u_{+})\subseteq \tau$ and
$\operatorname{supp}(u_{-})\subsetneq \tau '$ for any $\tau ' \in
\Phi_A^{\mathbf{s}}$. $h_{\tau}(a_i )=s_i \;\forall i\in \tau$ but
$h_{\tau}(a_j )< s_i \;\forall i\notin \tau$. Since $A u=0$ then
$Au_{+}=Au_{-}$ and by the assumption

$$ \sum_{i=1}^n  s_i (u_{+})_i =\langle h_{\tau}A , u_{+}\rangle=\langle h_{\tau}, A u_{+}\rangle=\langle h_{\tau}, A u_{-}\rangle <
 \sum_{i=1}^n  s_i (u_{-})_i $$ so $\operatorname{in}_{\mathbf{s}} (\Box_u )=\xi^{u_{-}}$
is a multiple of $\displaystyle \prod_{j\in \textmd{supp}(u_{-})}
\xi_j $ which is an element of the type of $i)$ by assumption.

\vspace{.3cm}

The case $\operatorname{supp}(u_{-})\subseteq \tau$ and
$\operatorname{supp}(u_{-})\subsetneq \tau '$ for any $\tau ' \in
\Phi_A^{\mathbf{s}}$ is analogous to the previous case. If there is
no face containing $\operatorname{supp}(u_{+})$ nor
$\operatorname{supp}(u_{-})$ it is trivial that $\xi^{u_{+}},
\xi^{u_{+}}\in  \widetilde{I}_A^{\mathbf{s}}$ and so $\xi^{u_{+}}-
\xi^{u_{+}}\in  \widetilde{I}_A^{\mathbf{s}}$. Finally, since $A
u_{+}=A u_{-}$ it is not possible that $\varsigma
=\operatorname{supp}(u_{+})\subseteq \tau$ and $\varsigma
'=\operatorname{supp}(u_{-}) \subseteq \tau '$ for any $\tau ,
\tau'\in \Phi_A^{\mathbf{s},d-1}$ such that $\tau \neq \tau' $
(because this implies that $\operatorname{pos}(\varsigma )
\cap\operatorname{pos}(\varsigma ') = \{\mathbf{0}\}$).

\vspace{.3cm}

Let us prove the inclusion $\widetilde{I}_A^{\mathbf{s}} \subseteq
\sqrt{\operatorname{in}_{\mathbf{s} } (I_A )}$. It is clear that the
elements of type $ii)$ lies in $\operatorname{in}_{\mathbf{s} } (I_A
)\subseteq \sqrt{\operatorname{in}_{\mathbf{s} } (I_A )}$ so we only
need to proof that the elements of type $i)$ belong to
$\sqrt{\operatorname{in}_{\mathbf{s} } (I_A )}$:

\vspace{.3cm}

If $a_{i_1}/s_{i_1} ,\ldots , a_{i_{r} }/s_{i_r}$ do not lie in a
common facet of $\Phi_A^{\mathbf{s}}$ then $\exists \mathbf{a}$ such
that:

\begin{enumerate}
\item[(1)] $\mathbf{a} \in \operatorname{conv}(a_{i_1}/s_{i_1} ,\ldots ,
a_{i_{r} }/s_{i_r})$.

\item[(2)] $\mathbf{a}$ lies in the interior of $\Delta_A^{\mathbf{s}}$.
\end{enumerate}

By $(1)$ we can write:

$$\mathbf{a}=\sum_{j=1}^r \epsilon_j a_{i_j} /s_{i_j}$$ with $\sum_{j} \epsilon_j = 1$ and $\epsilon_j \geq 0$,
$\forall j$.

\vspace{.3cm}

And by $(2)$ there exists $t>1$ such that $t a$ still belongs to
$\Delta_A^{\mathbf{s}}$ and we can write:

$$t \mathbf{a}=\sum_{i=1}^n \eta_i a_{i }/s_i $$ with $\sum_{j} \eta_j =1$ and $\eta_j \geq 0$, $\forall j$.

\vspace{.3cm}

Finally we put together both equalities and get:

$$\sum_{i=1}^r (t \epsilon_j) a_{i_j} /s_{i_j }=\sum_{i=1}^n \eta_i a_{i} / s_i .$$ Then there exists $\lambda \in \N^{\ast}$ such that
$$P=\prod_{j=1}^r \partial_{i_j}^{\lambda t \epsilon_j /s_j }
- \prod_{j=1}^n \partial_{j}^{\lambda \eta_j /s_j } \in I_A$$ and
the $\mathbf{s}$-degree of the first monomial is $\lambda t$ while
the $\mathbf{s}$-degree of the second monomial is $\lambda$ so
$\operatorname{in}_{\mathbf{s} } (P)=\prod_{j=1}^r
\partial_{i_j}^{\lambda t \epsilon_j /s_j } \in
\operatorname{in}_{\mathbf{s} } (I_A )$. This implies that $\xi_{i_1
}\cdots \xi_{i_r}\in \sqrt{\operatorname{in}_{\mathbf{s}} ( I_A )}$.

\end{proof}

Let $\tau \subseteq \{ 1,\ldots ,n \}$ be a set with cardinal $l\geq
0$ and consider the coordinate subspace $Y_{\tau}=\{ x_i =0 :\;
i\notin \tau \}$ with dimension $l$.

\vspace{.3cm}

The special filtration $$L_s :=F+(s-1) V_{\tau}$$ with $s\geq 1$ is
an intermediate filtration between the filtration $F$ by the order
of the differential operators and the Malgrange-Kashiwara filtration
with respect to $Y_{\tau}$ that we denote by $V_{\tau}$. Recall that
$V_{\tau}$ is associated with the weights $-1$ for the variables
$x_{\overline{\tau}}$, $1$ for $\partial_{\overline{\tau}}$ and $0$
for the rest of the variables.

\vspace{.3cm}

We shall identify $s\in \R_{>0}$ with $(s_1 ,\ldots ,s_n)$
throughout this section, where $s_i =1$ if $i\in \tau$ and $s_i =s$
if $i\notin \tau $. Then $(L_s )_{n+j}=s_j$ for all $j=1,\ldots ,n$.

\begin{lema}\label{bi-hom}
Assume $s > 1$ is such that
$\Phi_A^{s}=\Phi_A^{s+\epsilon}=\Phi_A^{s-\epsilon }$ for
sufficiently small $\epsilon >0$. Then the ideal $\widetilde{I}_A^{s
}$ is homogeneous with respect to $F$ and
$V_{\tau} $. In particular $\mathcal{V}(\widetilde{I}_A^{s }+\langle
A \mathbf{x} \underline{ \xi} \rangle)$ is a bi-homogeneous variety
in $\C^{2n}$.
\end{lema}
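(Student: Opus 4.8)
The plan is to produce a generating set of $\widetilde{I}_A^{s}+\langle A\mathbf{x}\underline{\xi}\rangle$ all of whose members are homogeneous \emph{both} for the grading attached to $F$ (which gives each $\xi_j$ weight $1$ and each $x_j$ weight $0$) and for the grading attached to $V_{\tau}$ (which gives $\xi_j$ weight $1$ and $x_j$ weight $-1$ for $j\notin\tau$, and weight $0$ to $x_j$ and $\xi_j$ for $j\in\tau$). Since an ideal generated by bi-homogeneous elements is bi-homogeneous, and the zero locus of a bi-homogeneous ideal is a bi-homogeneous subvariety of $\C^{2n}$, this proves both assertions of the lemma at once.

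The easy generators are disposed of immediately. Each symbol $\sum_{j}a_{ij}x_{j}\xi_{j}$ of an Euler operator is bi-homogeneous, since every monomial $x_j\xi_j$ has $F$-degree $1$ and $V_{\tau}$-degree $0$ (equal to $0+0$ when $j\in\tau$ and to $-1+1$ when $j\notin\tau$). The generators of $\widetilde{I}_A^{s}$ of type i) in Lemma \ref{radinL} are monomials in the $\xi_j$, hence homogeneous for any monomial grading. So everything reduces to the binomial generators of type ii): $\xi^{u_+}-\xi^{u_-}$ with $u\in\ker_{\Z}A$ and $\operatorname{supp}(u)$ contained in a facet $\tau'$ of $\Phi_A^{s}$. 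Such a binomial is $F$-homogeneous exactly when $\sum_i u_i=0$ and $V_{\tau}$-homogeneous exactly when $\sum_{i\notin\tau}u_i=0$, and verifying these two equalities is the one real point of the proof.

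To prove them I would argue as in the proof of Lemma \ref{radinL}: the facet $\tau'$ furnishes $h\in\Q^d$ with $\langle h,a_i\rangle=s_i$ for all $i\in\tau'$, so, using $\operatorname{supp}(u)\subseteq\tau'$ and $Au=0$, one gets $0=\langle h,Au\rangle=\sum_i s_i u_i=\sum_{i\in\tau}u_i+s\sum_{i\notin\tau}u_i$. This is where the hypothesis is used: because $\Phi_A^{s}=\Phi_A^{s+\epsilon}$, the same subset $\tau'$ is a facet of $\Phi_A^{s+\epsilon}$, and repeating the computation with the weight vector associated to $s+\epsilon$ gives $\sum_{i\in\tau}u_i+(s+\epsilon)\sum_{i\notin\tau}u_i=0$. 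Subtracting the two relations yields $\epsilon\sum_{i\notin\tau}u_i=0$, hence $\sum_{i\notin\tau}u_i=0$, and then $\sum_{i\in\tau}u_i=0$ and $\sum_i u_i=0$ as well. Therefore all generators are bi-homogeneous, so $\widetilde{I}_A^{s}+\langle A\mathbf{x}\underline{\xi}\rangle$ is a bi-homogeneous ideal and $\mathcal{V}(\widetilde{I}_A^{s}+\langle A\mathbf{x}\underline{\xi}\rangle)$ is bi-homogeneous in $\C^{2n}$. The main (and essentially the only) obstacle is the type ii) binomials; the trick that overcomes it is to read the relation $\sum_i s_i u_i=0$ as an identity valid at two nearby values of $s$ — which is exactly what the stability $\Phi_A^{s}=\Phi_A^{s\pm\epsilon}$ provides — and to difference it in $s$, forcing the coefficient $\sum_{i\notin\tau}u_i$ to vanish.
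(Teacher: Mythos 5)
Your proof is correct and follows essentially the same argument as the paper's: you unwind the paper's compact statement ``the binomial generators are $L_s$-homogeneous and, by the stability hypothesis, also $(L_s\pm\epsilon V_\tau)$-homogeneous, hence $F$- and $V_\tau$-homogeneous'' into the explicit identities $\sum_{i\in\tau}u_i + s\sum_{i\notin\tau}u_i = 0$ and $\sum_{i\in\tau}u_i + (s\pm\epsilon)\sum_{i\notin\tau}u_i = 0$, and then difference them. The only cosmetic difference is that you also spell out the bi-homogeneity of the type i) monomials and of the Euler symbols $\sum_j a_{ij}x_j\xi_j$, which the paper dismisses as immediate.
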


\begin{proof}
We only need to prove that the elements in Lemma \ref{radinL},
$ii)$, are bi-homogeneous. From the proof of Lemma \ref{radinL} we
deduce that they are $L_s $-homo\-geneous. By assumption we have that
they are also $(L_s \pm \epsilon V_{\tau} )$-homogeneous for all
$\epsilon
>0$ small enough.  Since $L_s\pm \epsilon V_{\tau}  =F+(s\pm \epsilon
-1)V_{\tau}$ we obtain that they are $F$-homogeneous and
$V_{\tau}$-homogeneous.
\end{proof}

\begin{lema}\label{lema-dim-n}
$\dim_{\C}(\mathcal{V}(\operatorname{in}_{L_s } (I_A ))\cap
\mathcal{V}(A \mathbf{x}\mathbf{\xi} ))\leq n$.
\end{lema}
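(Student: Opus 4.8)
The plan is to pass to the radical, feed in the combinatorial description of $\sqrt{\operatorname{in}_{\mathbf{s}}(I_A)}$ furnished by Lemma~\ref{radinL}, and then estimate the dimension fibrewise over the $\xi$-coordinates.

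First I would reduce to a purely toric question. Since $I_A\subseteq\C[\partial_1,\ldots,\partial_n]$ and each $x_i\partial_i$ has $L_s$-order $1$ (so that the associated graded ring of the Weyl algebra is the polynomial ring $\C[x,\xi]$ and $\sigma^{L_s}(E_i)=\sum_j a_{ij}x_j\xi_j$), one has $\operatorname{in}_{L_s}(I_A)=\operatorname{in}_{\mathbf{s}}(I_A)\cdot\C[x,\xi]$, where $\mathbf{s}=(s_1,\ldots,s_n)$ is the weight vector on the $\xi$'s and $\operatorname{in}_{\mathbf{s}}(I_A)\subseteq\C[\xi]$. As dimension is insensitive to nilpotents, Lemma~\ref{radinL} gives $\mathcal V(\operatorname{in}_{L_s}(I_A))=\C^n_x\times\mathcal V(\widetilde{I}_A^{\mathbf{s}})$ as a set, so it remains to bound the dimension of
\[
W:=\bigl(\C^n_x\times\mathcal V(\widetilde{I}_A^{\mathbf{s}})\bigr)\cap\mathcal V\bigl(\textstyle\sum_j a_{1j}x_j\xi_j,\ldots,\sum_j a_{dj}x_j\xi_j\bigr)\subseteq\C^{2n}.
\]

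Next I would stratify $W$ by the support $S:=\operatorname{supp}(\xi)\subseteq\{1,\ldots,n\}$ of the $\xi$-coordinate; there are finitely many strata $W_S$, so it suffices to prove $\dim W_S\le n$ whenever $W_S\neq\emptyset$. Here the explicit generators of $\widetilde{I}_A^{\mathbf{s}}$ from Lemma~\ref{radinL} do the work. If $S$ were contained in no facet of $\Phi_A^{\mathbf{s}}$, then $\prod_{i\in S}\xi_i$ would itself be a generator of type $i)$, which cannot vanish at a point with support exactly $S$; hence $S\subseteq\tau$ for some facet $\tau\in\Phi_A^{\mathbf{s},d-1}$. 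Consequently every $u\in\ker_{\Z}A$ with $\operatorname{supp}(u)\subseteq S$ has support inside $\tau$, so the corresponding generator $\xi^{u_+}-\xi^{u_-}$ of type $ii)$ vanishes on $W_S$, and dividing by the nonzero $\xi_j$ ($j\in S$) yields $\xi^u=1$. Therefore the image $U_S$ of $W_S$ under $(x,\xi)\mapsto\xi$ is contained in $\{\xi\in(\C^{\ast})^S:\xi^u=1\ \forall u\in\ker_{\Z}A_S\}$, whose dimension equals $\operatorname{rank}A_S$ because $\ker_{\Z}A_S$ is a saturated sublattice of $\Z^S$ of rank $|S|-\operatorname{rank}A_S$.

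Finally I would bound the fibres. Over $\xi\in U_S$ one has $\xi_j\neq0$ exactly for $j\in S$, so the fibre $\{x\in\C^n:\sum_j a_{ij}x_j\xi_j=0,\ i=1,\ldots,d\}$ is the kernel of $A\cdot\operatorname{diag}(\xi_1,\ldots,\xi_n)$, a matrix of rank $\operatorname{rank}\{a_j:j\in S\}=\operatorname{rank}A_S$; hence each fibre is a linear subspace of $\C^n$ of dimension $n-\operatorname{rank}A_S$. By the fibre-dimension theorem, $\dim W_S\le\dim U_S+(n-\operatorname{rank}A_S)\le\operatorname{rank}A_S+(n-\operatorname{rank}A_S)=n$, and $\dim W\le n$ follows by taking the maximum over the finitely many $S$. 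The delicate point is the combinatorial bookkeeping in the middle step — reading off from the generators of $\widetilde{I}_A^{\mathbf{s}}$ that a nonempty stratum forces $S$ into a facet and imposes the lattice relations of $A_S$; everything else is elementary linear algebra together with the fibre-dimension theorem. (The same argument in fact shows $\dim W=n$ as soon as $\Phi_A^{\mathbf{s}}\neq\emptyset$, which holds because $A$ has full rank.)
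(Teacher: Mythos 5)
Your proof is correct, but it takes a genuinely different route from the paper's. The paper's argument is Gr\"obner-theoretic: it picks a generic weight $\omega\in\R^n_{>0}$ so that $\operatorname{in}_{\omega}(\operatorname{in}_{L_s}(I_A))=\operatorname{in}_{\widetilde\omega}(I_A)$ is a monomial ideal for $\widetilde\omega=s+\epsilon\omega$, compares the staircases $E_<$ of $\operatorname{in}_{L_s}(I_A)+\langle Ax\xi\rangle$ and $\operatorname{in}_{\widetilde\omega}(I_A)+\langle Ax\xi\rangle$ under a compatible term order to reduce the bound to the generic monomial initial ideal, and then decomposes $\mathcal V(\operatorname{in}_{\widetilde\omega}(I_A)+\langle Ax\xi\rangle)$ by the standard pairs of $\operatorname{in}_{\widetilde\omega}(I_A)$, each of which contributes a linear component of dimension exactly $n$ by the linear-independence statement \cite[Cor.~3.2.9]{SST}. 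You instead pass to the radical at once, use Lemma~\ref{radinL} to get an explicit combinatorial presentation of $\mathcal V(\widetilde I_A^{\mathbf s})$, stratify by the support of $\xi$, observe that a nonempty stratum forces the support into a facet (via the type~i) generators) and imposes the lattice relations of $\ker_{\Z}A_S$ (via the type~ii) generators), compute the dimension of the resulting subtorus, and close with the fibre-dimension theorem since the $x$-fibre over $\xi$ with support $S$ is a linear space of dimension $n-\operatorname{rank}A_S$. Both arguments are sound. Yours is more elementary and transparent about \emph{why} the bound is $n$ (a subtorus of dimension $\operatorname{rank}A_S$ times a fibre of dimension $n-\operatorname{rank}A_S$), and it avoids the detour through a generic monomial initial ideal; the paper's fits the surrounding Gr\"obner machinery and reuses existing standard-pair technology from \cite{SST}, which also yields the equality $\dim=n$ with essentially no extra work (a fact the paper uses right after Lemma~\ref{lema-descarta-slopes}), whereas your parenthetical claim of equality is asserted but not carried out.
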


\begin{proof}
Let $\omega\in \R^n_{>0}$ be a generic weight vector such that
$\operatorname{in}_{\omega}(\operatorname{in}_{L_s } (I_A ) )$ is a
monomial ideal. For $\epsilon >0$ small enough
$\operatorname{in}_{\omega}(\operatorname{in}_{L_s } (I_A )
)=\operatorname{in}_{\widetilde{\omega}} (I_A )$ for
$\widetilde{\omega} = s +\epsilon \omega \in \R^{n}_{>0}$.

\vspace{.3cm}

Choose any monomial order $<$ in $\C [x ,\xi ]$ that refines the
partial order given by $(u,v):=(1- \epsilon \omega_1 ,\ldots,
1-\epsilon \omega_n ; \epsilon \omega_1 ,\ldots ,\epsilon \omega_n
)\in \R^{2 n}_{>0}$. It is clear that $\operatorname{in}_{(u,v)}(A x
\xi )_i = (A x \xi)_i$ for all $i=1,\ldots ,d$ and that
$\operatorname{in}_{(u,v)}(\operatorname{in}_{L_s}(I_A))=\operatorname{in}_{\widetilde{\omega}}(I_A)$.
Then $$\operatorname{in}_{\widetilde{\omega}}(I_A)+\langle A x \xi
\rangle \subseteq
\operatorname{in}_{(u,v)}(\operatorname{in}_{L_s}(I_A) + \langle A
x\xi \rangle )$$ and so we have that:

\begin{equation}
E_{<}(\operatorname{in}_{L_s } (I_A )+\langle A
\mathbf{x}\mathbf{\xi} \rangle
)=E_{<}(\operatorname{in}_{(u,v)}(\operatorname{in}_{L_s}(I_A) +
\langle A x\xi \rangle ))\supset E_{<}
(\operatorname{in}_{\widetilde{\omega}} (I_A )+\langle A
\mathbf{x}\mathbf{\xi}\rangle) \label{escalera}
\end{equation} where $E_{<} (I):=\{ (\alpha , \gamma
)\in \N^{2n}: \; \operatorname{in}_{<}(P)= c_{\alpha , \gamma}
x^{\alpha} \xi^{\gamma}, \; P\in I \setminus \{0\} \}$ for any ideal
$I\subseteq \C [x,\xi ]$. The inclusion (\ref{escalera}) implies
that the Krull dimension of the residue ring $\C [x ,\xi
]/(\operatorname{in}_{L_s } (I_A )+\langle A \mathbf{x}\mathbf{\xi}
\rangle)$ is at most the one of $\C [x ,\xi
]/(\operatorname{in}_{\widetilde{\omega}} (I_A )+\langle A
\mathbf{x}\mathbf{\xi}\rangle )$.

\vspace{.3cm}

Then it is enough to prove that $\C [x
,\xi]/(\operatorname{in}_{\widetilde{\omega}} (I_A )+\langle A
\mathbf{x}\mathbf{\xi}\rangle )$ has Krull dimension $n$. Since
$M=\operatorname{in}_{\widetilde{\omega}} (I_A )$ is a monomial
ideal then:

$$\operatorname{in}_{\widetilde{\omega}} (I_A )=\cap_{(\partial^b ,\sigma
)\in \mathcal{S}(M)} \langle \xi_j^{b_{j}+1}:\; j\notin \sigma
\rangle$$ where $S(M)$ denotes the set of standard pairs of $M$ (see
\cite[Section 3.2]{SST}). This implies that
$$\mathcal{V}(\operatorname{in}_{\widetilde{\omega}} (I_A )+\langle
A \mathbf{x}\mathbf{\xi}\rangle)=\cup_{(\partial^b ,\sigma )\in
\mathcal{S}(M)}\mathcal{V}(\langle \xi_j :\; j\notin \sigma \rangle
+ \langle A x \xi \rangle ).$$ By  \cite[Corollary 3.2.9.]{SST}, the
columns of $A$ indexed by $\sigma$ are linearly independent when
$(\partial^b ,\sigma )\in \mathcal{S}(M)$, so the dimension of each
component
$$\mathcal{V}(\langle \xi_j :\; j\notin \sigma \rangle + \langle A x
\xi \rangle )=\mathcal{V}(\langle \xi_j :\; j\notin \sigma \rangle +
\langle x_j \xi_j : \; j\in \sigma \rangle )$$ is $n$.
\end{proof}

\begin{lema}\label{lema-descarta-slopes}
Under the assumptions of lemma \ref{bi-hom} we have that $s$ is not
an algebraic slope of $\M_{A}(\beta )$ along $Y_{\tau}$ at any point
of $Y_{\tau}$.
\end{lema}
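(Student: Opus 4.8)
The plan is to show that the $s$-micro-characteristic variety of $\M_A(\beta)$ with respect to $Y_\tau$ is homogeneous with respect to the order filtration $F$: by Laurent's definition of algebraic slope (\cite{Laurent-ens-87}) this is exactly the assertion that $s$ is not a slope, and since the homogeneity obtained will be a property of the whole variety it holds at every point of $Y_\tau$. Writing $L_s = F + (s-1)V_\tau$ as above, the $s$-micro-characteristic variety is, up to the microlocalization appearing in its definition, the zero locus in $\operatorname{Spec}\operatorname{gr}^{L_s}(\D) = \C^{2n}$, with coordinates $(\mathbf{x},\underline{\xi})$, of the initial ideal $\operatorname{in}_{L_s}(\HAb)$; since microlocalization preserves $F$-homogeneity, it suffices to prove that $\mathcal{V}(\operatorname{in}_{L_s}(\HAb))$ is $F$-homogeneous.

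First I would locate $\mathcal{V}(\operatorname{in}_{L_s}(\HAb))$ inside the variety $W := \mathcal{V}(\operatorname{in}_{L_s}(I_A))\cap\mathcal{V}(A\mathbf{x}\underline{\xi})$ of Lemma \ref{lema-dim-n}. With $\mathbf{s} = (s_1,\dots,s_n)$, $s_i = 1$ for $i\in\tau$ and $s_i = s$ otherwise, one has $\operatorname{ord}_{L_s}(x_j\partial_j) = (1-s_j)+s_j = 1$ for every $j$, so the $L_s$-symbol of $E_i-\beta_i$ is the linear form $(A\mathbf{x}\underline{\xi})_i = \sum_j a_{ij}x_j\xi_j$; and since $\Box_u$ involves no $x$-variable, its $L_s$-symbol coincides with its $\mathbf{s}$-symbol, whence $\operatorname{in}_{L_s}(I_A) = \operatorname{in}_{\mathbf{s}}(I_A)$ and $\operatorname{in}_{\mathbf{s}}(I_A)\subseteq\operatorname{in}_{L_s}(\HAb)$. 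Consequently
$$\operatorname{in}_{\mathbf{s}}(I_A) + \langle (A\mathbf{x}\underline{\xi})_1,\dots,(A\mathbf{x}\underline{\xi})_d\rangle \subseteq \operatorname{in}_{L_s}(\HAb),$$
so $\mathcal{V}(\operatorname{in}_{L_s}(\HAb))\subseteq W$; moreover $W = \mathcal{V}(\widetilde{I}_A^{s}+\langle A\mathbf{x}\underline{\xi}\rangle)$ by Lemma \ref{radinL}.

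Now I would combine three inputs. By Lemma \ref{lema-dim-n}, $\dim_\C W \le n$. By Lemma \ref{bi-hom} (with Lemma \ref{radinL}), $W$ is bi-homogeneous for $F$ and $V_\tau$; being $F$-homogeneous it is invariant under the connected group $\C^{*}$ acting through the $F$-grading, so $\C^{*}$ fixes each irreducible component of $W$ set-wise and hence every irreducible component of $W$ is $F$-homogeneous. Finally, $\M_A(\beta)$ is holonomic, so by Laurent's theory its $s$-micro-characteristic variety is involutive, hence each of its irreducible components has dimension $\ge n$; since that variety lies in $W$ and $\dim_\C W \le n$, every such component has dimension exactly $n = \dim_\C W$ and is therefore an irreducible component of $W$. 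Thus $\mathcal{V}(\operatorname{in}_{L_s}(\HAb))$ is a union of irreducible components of $W$, and so is $F$-homogeneous by the preceding remark. Hence $s$ is not an algebraic slope of $\M_A(\beta)$ along $Y_\tau$ at any point of $Y_\tau$.

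The symbol computation is routine. The points that need care are the bookkeeping of the microlocalization hidden in the definition of the $s$-micro-characteristic variety, and, more essentially, the input that for a holonomic module this variety is involutive: it is precisely this --- together with the bound $\dim_\C W \le n$ of Lemma \ref{lema-dim-n} --- that upgrades the mere inclusion $\mathcal{V}(\operatorname{in}_{L_s}(\HAb))\subseteq W$ to ``$\mathcal{V}(\operatorname{in}_{L_s}(\HAb))$ is a union of irreducible components of $W$'', a closed subvariety of an $F$-homogeneous variety not being $F$-homogeneous in general.
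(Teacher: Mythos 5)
Your proof follows essentially the same route as the paper's: locate $\mathcal{V}(\operatorname{in}_{L_s}(\HAb))$ inside the bi-homogeneous variety $\mathcal{V}(\widetilde{I}_A^s+\langle A\mathbf{x}\underline{\xi}\rangle)$ of dimension $\le n$ (Lemmas \ref{radinL}, \ref{bi-hom}, \ref{lema-dim-n}), use the fact that the $s$-characteristic variety of a holonomic module has every component of dimension $\ge n$ to conclude it is a union of irreducible components of that bi-homogeneous variety, and hence is itself $F$-homogeneous. You supply a bit more detail (the symbol computation showing $\operatorname{in}_{L_s}(I_A)=\operatorname{in}_{\mathbf{s}}(I_A)$, and the explicit remark that a closed subvariety of a homogeneous variety need not be homogeneous, which is why the dimension count is needed), but the structure and key ideas coincide with the paper's argument.
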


\begin{proof}
We know that:

$$\operatorname{Ch}^s (\M_{A}(\beta ) )=\mathcal{V}(\sqrt{\operatorname{in}_{L_s } (\HAb
)})\subseteq \mathcal{V}(\sqrt{\operatorname{in}_{L_s } (I_A )})\cap
\mathcal{V}(A \mathbf{x}\mathbf{\xi} )=
\mathcal{V}(\widetilde{I}_A^{s }+\langle A \mathbf{x}\mathbf{\xi}
\rangle ).$$ Hence the $s$-characteristic variety of $\M_{A}(\beta
)$ is contained in a bi-homogeneous variety of dimension at most $n$
when the assumptions in Lemma \ref{bi-hom} are satisfied. Since
$\operatorname{Ch}^s (\M_{A}(\beta ) )$ is known to be purely
$n$-dimensional, each irreducible component is an irreducible
component of $\mathcal{V}(\widetilde{I}_A^{s }+\langle A
\mathbf{x}\mathbf{\xi} \rangle )$ and so it is also bi-homogeneous.
Moreover, this is true not only at the origin $x=0\in \R^n$ but also
at any point of $Y_{\tau}$ because $(L_s)_i= 0$ for $i\in \tau$ and
$Y_{\tau}=\{x_i =0 :\; i\notin \tau \}$. Then $s$ is not an
algebraic slope of $\M_A (\beta )$ along $Y_{\tau}$ at any point of
$Y_{\tau}$.
\end{proof}

\begin{nota}
Observe that after the proof of Lemma \ref{lema-descarta-slopes} we
have the equality in Lemma \ref{lema-dim-n}.
\end{nota}

\begin{nota}
A consequence of Lemma \ref{lema-descarta-slopes} is that $\M_A
(\beta )$ has no algebraic slopes along $\mathbf{0}\in \R^n$ at
$\mathbf{0}$.
\end{nota}

\begin{ejem}
Let $A=(a_1 \; a_2 \; a_3 \; a_4 )$ be the non-pointed matrix with
columns

$$a_1 =\left(\begin{array}{c}
        1 \\
        -1
      \end{array}\right), \; a_2 =\left(\begin{array}{c}
        0 \\
        1
      \end{array}\right), \; a_3 =\left(\begin{array}{c}
        -3 \\
        -2
      \end{array}\right), \; a_4=\left(\begin{array}{c}
        2 \\
        2
      \end{array}\right)$$ and consider the associated hypergeometric
      system: $$H_A (\beta )=I_A + \langle x_1 \partial_1 -3 x_3 \partial_3 +2 x_4 \partial_4 -\beta_1 , -x_1 \partial_1 +x_2 \partial_2 -2 x_3 \partial_3 + 2 x_4 \partial_4
      -\beta_2\rangle$$ where $I_A =\langle \partial_1 \partial_2 \partial_3 \partial_4 -1 , \partial_1 \partial_2^3 -\partial_3 \partial_4^2 , \partial_3^2 \partial_4^3 - \partial_2^2 \rangle $ and $\beta_1 ,\beta_2 \in \C$.

\vspace{.5cm}

\setlength{\unitlength}{8mm}
$$\begin{picture}(-2.5,6)(10,2)
\put(5,3){\vector(0,1){5}} \put(13,3){\vector(0,1){5}}
\put(2,5){\vector(1,0){6}} \put(10,5){\vector(1,0){6}}
\put(6,4){\line(-4,-1){4}} \put(14,4){\line(-4,-1){4}}
\put(6,4){\line(1,3){1}} \put(14,4){\line(-1,2){1}}
\put(2,3){\line(5,4){5}}\put(13,6){\line(-1,-1){3}}
\put(6,4){\makebox(0,0){$\bullet$}}\put(14,4){\makebox(0,0){$\bullet$}}
\put(5,6){\makebox(0,0){$\bullet$}}\put(13,6){\makebox(0,0){$\bullet$}}
\put(2,3){\makebox(0,0){$\bullet$}}\put(10,3){\makebox(0,0){$\bullet$}}
\put(7,7){\makebox(0,0){$\bullet$}}\put(15,7){\makebox(0,0){$\bullet$}}
\put(5,5.4){\makebox(0,0){$\bullet$}}\put(13.33,5.33){\makebox(0,0){$\bullet$}}
\put(6,4){\makebox(0.8,-0.4){$a_1$}}\put(14,4){\makebox(0.8,-0.4){$a_1$}}
\put(5,5.4){\makebox(-1.1,0.3){$\frac{2}{5} a_2
$}}\put(13,6){\makebox(0.8,0.4){$a_2$}}
\put(5,6){\makebox(0.8,0.8){$a_2$}}
\put(2,3){\makebox(0.8,-0.4){$a_3$}}\put(10,3){\makebox(0.8,-0.4){$a_3$}}
\put(7,7){\makebox(0.8,-0.4){$a_4$}}\put(15,7){\makebox(0.8,-0.4){$a_4$}}
\put(13.33,5.33){\makebox(1.2,0.1){$\frac{1}{6} a_4 $}}
\put(5,2){\makebox(0,0){Figure 2}}\put(13,2){\makebox(0,0){Figure
3}}
\end{picture}$$

From Lemma \ref{lema-descarta-slopes} we deduce that there is not
any algebraic slope along a coordinate subspace different from
$Y=\{x_2 = 0\}$ and $Z=\{x_4 =0 \}$. By Corollary
\ref{analytic-slope} and using again Lemma
\ref{lema-descarta-slopes} we know that the unique slope of $\M_A
(\beta )$ along $Y$ is $|A_{\sigma}^{-1}a_2|=5/2$ with $\sigma =\{ 3
,4\}$ and that the unique slope of $\M_A (\beta )$ along $Z$ is
$|A_{\overline{\sigma}}^{-1}a_4|=6$ with $\overline{\sigma}=\{ 1
,2\}$. Notice that $2 a_2 /5$ lies in the affine line passing
through $a_3$ and $a_4$ (see Figure 1) and that $a_4 /6$ lies in the
affine line passing through $a_1$ and $a_2$ (see Figure 2). We also
can construct $\operatorname{vol}_{\Z A } (\Delta_{\sigma})=2$
Gevrey solutions of $\M_A (\beta )$ along $Y$ (it is analogous for
$Z$) as follows.

\vspace{.3cm}

The matrix $B_{\sigma}$ is
$$B_{\sigma}=\left(\begin{array}{cc}
                                                1 & 0 \\
                                                0 & 1 \\
                                                2 & -1 \\
                                                5/2 & -3/2
                                              \end{array}\right)$$ and we consider the vectors $v^{1}=(0,0,A_{\sigma}^{-1}\beta )=(0,0,-\beta_1 +\beta_2 , -\beta_1 +
 \frac{3}{2}\beta_2 )$ and $v^2 =(0,1,A_{\sigma}^{-1}(\beta -a_2)
 )=(0,1,-\beta_1 +\beta_2 -1, -\beta_1 +
 \frac{3}{2}(\beta_2 -1 ) )$.

\vspace{.3cm}

If none of $\beta_1 -\beta_2 , -\beta_1 + \frac{3}{2}\beta_2 $ and
$-\beta_1 + \frac{3}{2}(\beta_2 -1 )$ are integers then the series
$\phi_{v^{1}}$ and $\phi_{v^{2}}$ are Gevrey series solutions along
$Y$ of $\M_A (\beta )$ with index $5/2$ at any point of $Y\cap \{
x_1 x_2 \neq 0\}$. In other case, we can replace the vectors $v^i$
by $v^{i,k}:= v^i + k (0,1,-1,-3/2 )$ with $k\in \N \setminus 2 \N$
big enough in order to obtain Gevrey solutions $\phi_{v^{i,k}} $ of
$\M_A (\beta )$ modulo convergent series at any point of $Y\cap \{
x_1 x_2 \neq 0\}$ with index $5/2$.
\end{ejem}

Denote for $s > 1$: $$\Omega_{Y_{\tau}}^{(s)} = \{ \sigma \subseteq
\tau : \; \det ( A_{\sigma })\neq 0 , \max \{ |A_{\sigma }^{-1} a_i
|:\; i\notin \tau \} = s, |A_{\sigma }^{-1} a_j |\leq 1 , \forall j
\in \tau \}.$$ Then we have the following result.

\begin{lema}\label{Lema-index-hypergeometric}
If $\sigma \in \Omega_{Y_{\tau}}^{(s_0 )}\neq \emptyset$ then for
all $p\in Y_{\tau} \cap U_{\sigma}$:

\begin{enumerate}
\item[1)] $s_0$ is the Gevrey index of a solution of $\M_A (\beta )$ in
$\cO_{\widehat{X|Y_{\tau}},p}$ for very generic parameters $\beta
\in \C^{d}$.

\item[2)] $s_0$ is the Gevrey index of a solution of $\M_A (\beta )$ in
$\cO_{\widehat{X|Y_{\tau}},p}/\cO_{X|Y_{\tau}(<s_0),p}$ for all $\beta \in
\C^d$.

\item[3)] If $Y_{\tau}$ is a hyperplane, then $s_0$ is the Gevrey index of a solution of $\M_A (\beta )$ in
$\cO_{\widehat{X|Y_{\tau}},p}/\cO_{X|Y_{\tau},p}$ for all $\beta \in\C^d$.
\end{enumerate}
\end{lema}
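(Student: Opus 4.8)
The idea is that all three parts follow by specializing, to the simplex $\sigma$ and the subspace $Y_{\tau}$, the constructions of Sections \ref{construction-Gevrey} and \ref{slopes-simplex-section}. Note first that the hypothesis $\sigma\in\Omega_{Y_{\tau}}^{(s_0)}$ says exactly that $\det(A_\sigma)\neq 0$, that $s_0=\max\{|A_\sigma^{-1}a_i|:i\notin\tau\}$, and that $|A_\sigma^{-1}a_j|\le 1$ for all $j\in\tau$; the latter means $a_j\notin H_\sigma^+$ for every $j\in\tau$, i.e. $\tau\subseteq\{j:a_j\notin H_\sigma^+\}$, which is the index set also called $\tau$ in Section \ref{slopes-simplex-section}. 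In particular $Y_{\tau}$ (here the subspace cut out by $x_i=0$, $i\notin\tau$) is contained in the subspace where the series of Section \ref{construction-Gevrey} live, and $s_0=\max_i\{|A_\sigma^{-1}a_i|:i\notin\sigma\}$ is the relevant Gevrey order since the exponents $|A_\sigma^{-1}a_j|$ for $j\in\tau\setminus\sigma$ are $\le 1$ and hence do not increase the order.

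\textbf{Part 1.} For very generic $\beta$ I would invoke Theorem \ref{GevreyI} with $\varsigma=\tau$: the series $\phi_\sigma^{\mathbf k}$ is a Gevrey solution of $\M_A(\beta)$ of order $s_0$ along $Y_\tau$ at points of $Y_\tau\cap U_\sigma$, and for very generic $\beta$ its Gevrey index is exactly $s_0$. Since very generic $\beta$ also guarantees $\varphi_{v^{\mathbf k}}\neq 0$ (the Remark after Definition \ref{definition-generic}), this $\phi_\sigma^{\mathbf k}\in\cO_{\widehat{X|Y_\tau},p}$ is the desired solution with Gevrey index $s_0$. One should note that "Gevrey index $s_0$" in the sense of Definition \ref{definition-Gevrey} follows from $\rho^{\tau}_{s'}$ failing to converge for any $s'<s_0$, which is precisely what Lemma \ref{lema-Gevrey} gives in the very generic case.

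\textbf{Part 2.} For arbitrary $\beta$ the series $\phi_\sigma^{\mathbf k}$ may fail to have Gevrey index $s_0$ (it could even be a finite sum), so instead I would use the refined construction from Section \ref{slopes-simplex-section}: choose $\eta\in\Omega_{\mathbf k}$ of minimal cardinal among those for which $\phi_{\mathbf k,\eta}$ has Gevrey index $s_0$, and form $\phi_{\mathbf k,\eta}$. Such an $\eta$ exists because $\bigcup_{\eta\in\Omega_{\mathbf k}}\Lambda_{\mathbf k,\eta}=\Lambda_{\mathbf k}$ and some piece must carry the full Gevrey index $s_0$. The series $\phi_{\mathbf k,\eta}$ is annihilated by the Euler operators, and by Lemma \ref{series-indice-adecuado} $\Box_u(\phi_{\mathbf k,\eta})\in\cO_{X|Y_\tau}(<s_0)$ for all $u\in L_A$; since the toric operators $\Box_u$ together with the Euler operators generate $H_A(\beta)$, the class of $\phi_{\mathbf k,\eta}$ in $\cO_{\widehat{X|Y_\tau},p}/\cO_{X|Y_\tau}(<s_0)_p$ is a solution of $\M_A(\beta)$, and it has Gevrey index $s_0$ there because $\phi_{\mathbf k,\eta}$ does and we quotient only by series of strictly smaller order. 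This is exactly Proposition \ref{dim-gevrey-simplex} applied to a single $\mathbf k$; the point requiring care is verifying that the chosen $\eta$ indeed exists and that minimality of cardinality is the correct selection criterion so that Lemma \ref{series-indice-adecuado} applies — this is the main technical content and it has already been isolated into that lemma.

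\textbf{Part 3.} When $Y_\tau$ is a hyperplane we can upgrade "modulo $\cO_{X|Y_\tau}(<s_0)$" to "modulo $\cO_{X|Y_\tau}$". Indeed, since $s_0>1$, $\cO_{X|Y_\tau}(<s_0)$ contains $\cO_{X|Y_\tau}=\cO_{X|Y_\tau}(1)$, so having a solution modulo the former does not immediately give one modulo the latter; instead I would apply Lemma \ref{lema-slope}. By Part 2 there is $f=\phi_{\mathbf k,\eta}\in\cO_{X|Y_\tau}(s_0)_p$ with Gevrey index $s_0>1$ whose class in $(\cO_{X|Y_\tau}(s_0)/\cO_{X|Y_\tau}(<s_0))_p$ solves $\M_A(\beta)$, for all $p$ in the relatively open set $U=Y_\tau\cap U_\sigma\subseteq Y_\tau$. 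Lemma \ref{lema-slope} then produces, for $p$ in the closure of $U$, an element $\overline{f-h}\in\mathcal{H}om_\D(\M_A(\beta),\cQ_{Y_\tau}(s_0))\setminus\bigcup_{s'<s_0}\mathcal{H}om_\D(\M_A(\beta),\cQ_{Y_\tau}(s'))$, i.e. a solution in $\cO_{\widehat{X|Y_\tau},p}/\cO_{X|Y_\tau,p}$ of Gevrey index exactly $s_0$; since $\mathbf{0}$ lies in the closure of $Y_\tau\cap U_\sigma$, and by Remark \ref{nota-slope}/Remark \ref{nota-slope}-type homogeneity this extends to every $p\in Y_\tau$, part 3 follows. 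The subtle point here is that Lemma \ref{lema-slope} is stated for hyperplanes (it uses perversity of $\operatorname{Irr}_Y^{(s')}(\M)$), which is exactly why part 3 carries the hypothesis that $Y_\tau$ be a hyperplane; this restriction is the expected obstacle to extending the statement to higher codimension.
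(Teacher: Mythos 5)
Your proof follows essentially the same route as the paper's: part 1 via Theorem \ref{GevreyI}, part 2 via the construction of Section \ref{slopes-simplex-section} (Lemma \ref{series-indice-adecuado} and Proposition \ref{dim-gevrey-simplex}), and part 3 by running the argument inside the proof of Lemma \ref{lema-slope} rather than just its statement. The only imprecision (shared with the paper's terse wording) is in part 3: the element $\overline{f-h}$ is actually produced only on the dense relatively open subset $U'\subseteq U$ where $\mathcal{H}^1(\operatorname{Irr}_{Y_\tau}^{(s')}(\M))$ vanishes, not at every point of the closure of $U$, so the "for all $p$" claim is really proved on a dense open part of $Y_\tau\cap U_\sigma$; this is harmless for the downstream use in Theorem \ref{characterization-slopes}, where only the existence of a slope (hence density) is needed.
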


\begin{proof}
We consider any $\sigma \in \Omega_{Y_{\tau}}^{(s_0 )}$. If $\beta$
is very generic the Gevrey series solutions of $\M_A (\beta )$ along
$Y_{\tau}$ associated with $\sigma$, $\{
\phi_{\sigma}^{\mathbf{k}}\}_{\mathbf{k}} $ (see Section
\ref{construction-Gevrey}), have Gevrey index $s_0 =\max
\{|A_{\sigma }^{-1} a_i |:\; i\in \tau \}$ along $Y_{\tau}$ at $p\in
Y_{\tau}\cap U_{\sigma}$. If $\beta$ is not very generic we can
proceed as in Section \ref{slopes-simplex-section} in order to
construct a Gevrey series associated with $\sigma$ with index $s_0$
which is a solution of $\M_A (\beta )$ in $(\cO_{X|Y}( s_0
)/\cO_{X|Y}(<s_0))_p$ for all $p\in Y\cap U_{\sigma}$. By a similar
argument to the one in the proof of Lemma \ref{lema-slope} the
result is obtained.
\end{proof}

Assume that $Y$ is a coordinate hyperplane for the remainder of this
section. We can reorder the variables so that $Y=\{x_n = 0\}$.

\vspace{.3cm}

In the following result the equivalence of 3) and 4) is a particular
case of the Comparison Theorem of the slopes
\cite{Laurent-Mebkhout}. However, we just need to use this theorem
for the implication $3) \Longrightarrow 4)$.

\begin{teo}\label{characterization-slopes}
For all $p\in Y$ the following statements are equivalent:

\begin{enumerate}
\item[1)] $\Phi_A^{s}$ jumps at $s=s_0$.

\item[2)] $\Omega_Y^{(s_0 )} \neq \emptyset$.

\item[3)] $s_0$ is an analytic slope of $\M_A (\beta )$ along $Y$ at $p$.

\item[4)] $s_0$ is an algebraic slope of $\M_A (\beta )$ along $Y$ at $p$.
\end{enumerate}
\end{teo}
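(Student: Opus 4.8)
The plan is to prove the chain of equivalences $1)\Leftrightarrow 2)\Leftrightarrow 3)$ directly and then use the Comparison Theorem for $3)\Rightarrow 4)$ (and the trivial $4)\Rightarrow 3)$ is not needed since one can also argue $4)\Rightarrow 1)$ via Lemma \ref{lema-descarta-slopes}). Throughout, recall $Y=\{x_n=0\}$, so $\tau=\{1,\ldots,n-1\}$ and $\overline\tau=\{n\}$, and the special filtration is $L_s=F+(s-1)V_\tau$ with $(L_s)_{n+j}=s_j$ where $s_j=1$ for $j\le n-1$ and $s_n=s$.

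\textbf{Step 1: $1)\Leftrightarrow 2)$.} This is purely combinatorial. Writing $\mathbf{s}=(1,\ldots,1,s)$, the umbrella $\Phi_A^{\mathbf{s}}$ is the set of faces of $\Delta_A^{\mathbf{s}}=\operatorname{conv}(a_1,\ldots,a_{n-1},a_n/s,\mathbf 0)$ not containing the origin. As $s$ increases, only the point $a_n^{\mathbf{s}}=a_n/s$ moves, along the ray from $\mathbf 0$ through $a_n$, approaching the origin. I would show $\Phi_A^{s}$ changes exactly at those $s_0$ for which $a_n/s_0$ lies on the affine hull of some $(d-1)$-subset $\sigma\subseteq\tau$ with $\det(A_\sigma)\neq 0$ that spans a facet of the umbrella, i.e. $|A_\sigma^{-1}a_n|=s_0$ and $|A_\sigma^{-1}a_j|\le 1$ for all $j\in\tau$ (so that $\sigma$ actually bounds $\Delta_A^{\mathbf{s}}$); but this is precisely the condition $\sigma\in\Omega_Y^{(s_0)}$. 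The one subtlety is handling whether $a_n/s_0$ lands in the relative interior of a facet versus on a lower-dimensional face, and matching degenerate cases, but in all cases a jump of $\Phi_A^s$ at $s_0$ forces the existence of such a $\sigma$ and conversely.

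\textbf{Step 2: $2)\Rightarrow 3)$.} Suppose $\sigma\in\Omega_Y^{(s_0)}$. If $s_0=1$ there is nothing to prove (slopes are $>1$), so assume $s_0>1$; then $U_\sigma$ as in \eqref{domain} is nonempty and $\mathbf 0\in\overline{Y\cap U_\sigma}$. By Lemma \ref{Lema-index-hypergeometric}(3), $s_0$ is the Gevrey index of a solution of $\M_A(\beta)$ in $(\cO_{X|Y}(s_0)/\cO_{X|Y}(<s_0))_p$ for every $p\in Y\cap U_\sigma$. Lemma \ref{lema-slope} then shows $s_0$ is a slope of $\M_A(\beta)$ along $Y$ at any point in $\overline{Y\cap U_\sigma}$, and Remark \ref{nota-slope} (applied to the irreducible algebraic hypersurface $Y$) upgrades this to: $s_0$ is a slope at \emph{every} point $p\in Y$. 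Hence $3)$ holds.

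\textbf{Step 3: $3)\Rightarrow 4)$ and $4)\Rightarrow 1)$, closing the loop.} The implication $3)\Rightarrow 4)$ is exactly the Comparison Theorem of Laurent and Mebkhout \cite{Laurent-Mebkhout} for holonomic $\D$-modules along the smooth hypersurface $Y$: analytic slopes coincide with algebraic slopes. For $4)\Rightarrow 1)$ I would use Lemma \ref{lema-descarta-slopes}: if $\Phi_A^s$ does \emph{not} jump at $s_0$, then $\Phi_A^{s_0}=\Phi_A^{s_0\pm\epsilon}$ for small $\epsilon>0$, so Lemma \ref{bi-hom} applies and Lemma \ref{lema-descarta-slopes} gives that $s_0$ is not an algebraic slope of $\M_A(\beta)$ along $Y$ at any point of $Y$; contrapositively, $4)\Rightarrow 1)$. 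This closes the cycle $1)\Rightarrow 2)\Rightarrow 3)\Rightarrow 4)\Rightarrow 1)$, and together with $2)\Rightarrow 1)$ from Step 1 all four statements are equivalent.

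\textbf{Main obstacle.} The analytic content ($2)\Rightarrow 3)$) is essentially bookkeeping on top of the already-established Lemmas \ref{lema-slope} and \ref{Lema-index-hypergeometric}, and $3)\Leftrightarrow 4)$ is cited. The real work is the combinatorial equivalence $1)\Leftrightarrow 2)$: one must carefully track how the single moving vertex $a_n/s$ reorganizes the facet structure of $\Delta_A^{\mathbf{s}}$, identify the finite set of critical values $s_0$, and verify in each case — including when $a_n/s_0$ hits a face of dimension $<d-1$, or when several facets degenerate simultaneously — that a jump occurs if and only if some $\sigma\subseteq\tau$ with $\det(A_\sigma)\ne 0$ satisfies both $|A_\sigma^{-1}a_n|=s_0$ and $|A_\sigma^{-1}a_j|\le 1$ for all $j\in\tau$. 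I expect this case analysis, rather than anything about $\D$-modules, to be the delicate part.
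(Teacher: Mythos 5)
Your proposal follows the same cyclic structure as the paper's proof: $1)\Leftrightarrow 2)$ by analyzing how the single moving vertex $a_n/s$ of $\Delta_A^{\mathbf s}$ creates or destroys facets, $2)\Rightarrow 3)$ via Lemma \ref{Lema-index-hypergeometric}, Lemma \ref{lema-slope} and Remark \ref{nota-slope}, $3)\Rightarrow 4)$ by the Comparison Theorem, and $4)\Rightarrow 1)$ by the contrapositive via Lemmas \ref{bi-hom} and \ref{lema-descarta-slopes}. You leave the case analysis for $1)\Leftrightarrow 2)$ as a sketch (the paper works through the three cases for a potential facet $\eta$ depending on whether $n\in\eta$ and on $\dim\operatorname{conv}(\eta\setminus\{n\})$), but you correctly identify exactly which degenerate configurations need to be checked, so the approach is essentially identical.
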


\begin{proof}
We will prove first the equivalence of 1) and 2). Assume there
exists $\sigma \in \Omega_Y^{(s_0 )} \neq \emptyset$, then
$H_{\sigma}=\{ y \in \R^d : \; |A_{\sigma}^{-1} y|=1\}$ is the only
hyperplane containing $a_i$ for all $i\in \sigma$ and
$|A_{\sigma}^{-1}(a_n /(s_0 +\epsilon ))|=s_0 /(s_0 +\epsilon)<1$,
$\forall \epsilon >0$. Hence $a_n /s_0 \in H_{\sigma}$ but $a_n
/(s_0 +\epsilon) \notin H_{\sigma}$, $\forall \epsilon >0$.

\vspace{.3cm}

Consider $\eta = \{ i: \; a_i \in H_{\sigma} \}$, then $\eta \in
\Phi_A^{s_0 + \epsilon, d-1}$, $\forall \epsilon >0$ and $n \notin
\eta$ while $\eta \cup \{n\} \in \Phi_A^{s_0 , d-1}$, so
$\Phi_A^{s}$ jumps at $s=s_0$.

\vspace{.3cm}

Conversely if $\Omega_Y^{(s_0 )} = \emptyset$ then $\forall \sigma
\subseteq \{1,2,\ldots, n-1\}$ such that $|A_{\sigma}^{-1} a_i|\leq
1$ for all $i=1,\ldots ,n-1$ we have $|A_{\sigma}^{-1} a_n|<s_0 $ or
$|A_{\sigma}^{-1} a_n|>s_0 $.

\vspace{.3cm}

Consider $\epsilon>0$ small enough such that $|A_{\sigma}^{-1}
a_n|<s_0 \pm \epsilon$ if $ |A_{\sigma}^{-1} a_n|<s_0 $ and
$|A_{\sigma}^{-1} a_n|>s_0 \pm \epsilon$ if $ |A_{\sigma}^{-1}
a_n|>s_0 $ for all simplices $\sigma$ such that $|A_{\sigma}^{-1}
a_i|\leq 1$ for all $i=1,\ldots ,n-1$.

\vspace{.3cm}

Let us prove that $\Phi_A^{s_0 ,d-1}=\Phi_A^{s_0 \pm \epsilon
,d-1}$.

\vspace{.3cm}

Assume first that $n\notin \eta\subseteq \{1 ,\ldots ,n \}$. Then:

\vspace{.3cm}

$\eta \in \Phi_A^{s_0 ,d-1} \Longleftrightarrow \exists \sigma
\subseteq \eta \mbox{ such that } | A_{\sigma}^{-1} a_i |= 1 $ for
$i\in \eta$, $| A_{\sigma}^{-1} a_i |< 1 $ for $i\notin \eta \cup \{
n\}$ and $| A_{\sigma}^{-1} a_n |< s_0 $  $\Longleftrightarrow
\exists \sigma \subseteq \eta \mbox{ such that } | A_{\sigma}^{-1}
a_i |= 1 $ for $i\in \eta$, $| A_{\sigma}^{-1} a_i |< 1 $ for
$i\notin \eta \cup \{ n\}$ and $| A_{\sigma}^{-1} a_n |< s_0 \pm
\epsilon $ $\Longleftrightarrow \eta \in \Phi_A^{s_0 \pm \epsilon
,d-1}$.

\vspace{.3cm}

If $n\in \eta \subseteq \{1 ,\ldots ,n \}$ and $\dim
(\operatorname{conv}(\eta \setminus \{ n\} ))=d-1$ then there exists
a simplex $\sigma \subseteq \eta\setminus \{ n\}$ such that $\det (
A_{\sigma } )\neq 0$. Then $\eta \notin \Phi_A^{s_0 ,d-1}$ because
in such a case $|A_{\sigma}^{-1}a_i |\leq 1$ for all $i\neq n$,
$|A_{\sigma}^{-1}a_n |=s_0 $ and so $\sigma \in \Omega_Y^{(s_0 )}$,
a contradiction. Moreover $\eta \notin \Phi_A^{s_0 \pm \epsilon
,d-1}$ for $\epsilon >0$ small enough because $|A_{\sigma}^{-1}a_n
|$ is a fixed value while $s_0 \pm \epsilon$ varies with $\epsilon$.

\vspace{.3cm}

Finally, if $n\in \eta \subseteq \{1 ,\ldots ,n \}$ and $\dim
(\operatorname{conv}(\eta \setminus \{ n\} ))<d-1$ then there exists
a hyperplane $H'=\{\mathbf{y} \in \R^d : \; h'(\mathbf{y})=0 \}$
that contains $\mathbf{0}\in \R^d$ and $a_i$ for all $i\in \eta
\setminus \{n\} $. We also can choose the linear function $h'$ in
the definition of $H'$ such that $h'(a_n )=1$. In this case:

\vspace{.3cm}

$\eta \in \Phi_A^{s_0 ,d-1}\Longleftrightarrow \eta \setminus \{n\}
\in \Phi_A^{s_0 ,d-2}$ and $\exists H'' = \{ \mathbf{y} \in \R^d :
\; h''(\mathbf{y})= 1\}$ such that $h''(a_i)=1 $ for $i\in
\eta\setminus \{n \}$, $h''(a_n )=s_0$ and $h''(a_j )<1$ for
$j\notin \eta$. This imply for $h:= h''\pm \epsilon h'$ that $h(a_i
)=1$ for all $i\in \eta \setminus \{n\}$, $h(a_n )=s_{0} \pm
\epsilon$ and $h(a_j )=h''(a_j)\pm \epsilon h'(a_j)<1$ for $j\notin
\eta$ and $\epsilon>0$ small enough because $h''(a_j )<1$ for
$j\notin \eta$. Hence $\eta \in \Phi_A^{s_0 \pm \epsilon ,d-1}$.

\vspace{.3cm}

We have proved that $\Phi_A^{s_0 ,d-1}\subseteq \Phi_A^{s_0 \pm
\epsilon ,d-1}$. This implies equality since they are
$(A,s)$-umbrellas of the same matrix $A$ and $s>0$ for $s=s_0$ and
$s=s_0 \pm \epsilon$ (in particular $\cup_{\eta \in \Phi_A^{s
,d-1}}\operatorname{pos}(\eta )=\operatorname{pos}(A)$ do not
depends on $s>0$). Moreover, the $(A,s)$-umbrellas are determined by
their facets, so $\Phi_A^{s_0 }=\Phi_A^{s_0 \pm \epsilon}$.

\vspace{.3cm}

The implication $2)\Longrightarrow 3)$ is a direct consequence of
Lemma \ref{Lema-index-hypergeometric} if $p$ belongs to the closure of $Y\cap
U_{\sigma}$ for some $\sigma \in \Omega_{Y}^{(s_0 )}$ (for example,
if $p=0$). Nevertheless, since the analytic slopes are found in
relatively open subsets of the hyperplane $Y$ we can use the
constructibility of the slopes in order to prove the result at any
point of $Y$ (see Remark \ref{nota-slope}). For the implication $3)\Longrightarrow 4)$ we use the Comparison
Theorem of the slopes \cite{Laurent-Mebkhout}. Finally, the implication $4)\Longrightarrow 1)$ is nothing but Lemma
\ref{lema-descarta-slopes}.
\end{proof}

\begin{nota}
Notice that if $Y$ is a coordinate hyperplane then every algebraic
slope $s_0$ of $\M_A (\beta)$ along $Y$  is the Gevrey index of
certain Gevrey solutions of $\M_A (\beta)$ along $Y$ modulo convergent series. Example
\ref{contraejemplo} shows that this is not true for coordinate
subspaces of codimension greater than one.
\end{nota}

\begin{ejem}\label{contraejemplo}
Let $\M_A (\beta )$ be the hypergeometric $\D$-module associated
with the matrix

$$A=\left(\begin{array}{ccc}
      1 & 0 & 3 \\
      0 & 1 & -1
    \end{array}\right)$$ and the parameter vector $\beta \in
    \C^2$. In this case $n=3=d+1$ and so the toric ideal is
    principal $I_A =\langle \partial_1^3 -\partial_2 \partial_3 \rangle$.

\vspace{.3cm}

\setlength{\unitlength}{12mm}
$$\begin{picture}(-2,5)(10,2)
\put(5,4){\vector(0,1){3}} \put(4,5){\vector(1,0){4}}
\put(5,6){\line(3,-2){3}} \put(5,5.66){\line(3,-2){2}}
\put(6,5){\makebox(0,0){$\bullet$}}
\put(5,6){\makebox(0,0){$\bullet$}}
\put(5,5.66){\makebox(0,0){$\bullet$}}
\put(8,4){\makebox(0,0){$\bullet$}}
\put(7,4.33){\makebox(0,0){$\bullet$}}
\put(6,5){\makebox(-0.5,-0.5){$a_1$}}
\put(5,6){\makebox(-0.7,0.4){$a_2$}}
\put(5,5.66){\makebox(-1,-0.3){$\frac{2}{3} a_2$}}
\put(8,4){\makebox(0.8,0.6){$a_3$}}
\put(7,4.33){\makebox(-0.6,-0.6){$\frac{2}{3} a_3$}}
\put(5,3){\makebox(0,0){Figure 4}}
\end{picture}$$

If we take $Y=\{x_2 =x_3 =0\}$ then the only algebraic slope of
$\M_A (\beta )$ along $Y$ at $p\in Y$ is $s_0 = 3/2$ (observe Figure
4 and see \cite{SW} since $A$ is pointed). Nevertheless, we will
prove that if $\beta_2 \notin \Z$ then for all $s\geq 1$,
$\mathcal{H}^0 (\operatorname{Irr}_Y^{(s)}(\M_A (\beta ))=0$:

\vspace{.3cm}

For any formal series $f=\sum_{m\in \N^2 } f_m (x_1 ) x_2^{m_2}
x_3^{m_3}$ along $Y$ at $p =(p_1 ,0 ,0 )\in Y$ then
$$(E_2 -\beta_2 )(f)=\sum_{m\in \N^2 } (m_2 - m_3 - \beta_2 ) f_m (x_1
) x_2^{m_2} x_3^{m_3} $$ and hence $(E_2 -\beta_2 )(f)\in \cO_{X,p}$
(resp. $(E_2 -\beta_2 )(f)=0$) if and only if $f \in \cO_{X,p}$
(resp. $f=0$) because $(m_2 - m_3 - \beta_2 )\neq 0$, $\forall m_2 ,
m_3 \in \N$.

\vspace{.3cm}

On the other hand, if $\beta_2 \in \Z$  we can take $k \in \N$ the
minimum natural number such that $v=(\beta_1 -3 k , \beta_2 +k , k
)\in \C \times \N^2$ has minimal negative support. Since $A v
=\beta$ then
$$\phi_v =\sum_{m\geq 0} \frac{k! [\beta_1 -3 k ]_{3
m}}{(k+m)![\beta_2 + k+m]_m } x_1^{\beta_1 -3 (k+m)}x_2^{\beta_2 +
k+m}x_3^{k+m}$$ is a formal solution of $\M_A (\beta )$ along $Y$ at
any point $p\in Y$ with $p_1 \neq 0$. In fact $\phi_v$ has Gevrey
index $s_0 = 3/2$ if $ \beta_1 - 3k \notin \N$ and it is a
polynomial when $\beta_1 - 3k \in \N$. In this last case, if we
consider $v' = v+k ' u$ with $u=(-3,1,1)\in L_A$ and $k'\in \N$ such
that $v_1 ' <0$ then $\phi_{v'} $ is a Gevrey series of index $s_0$
and $P(\phi_{v'})$ is convergent along $Y$ at any point $p\in
Y\setminus \{ 0\}$.

\vspace{.3cm}

So the algebraic slope appears as the index of a Gevrey series
solution along $Y$ of $\M_A (\beta )$ if and only if $\beta_2 \in
\Z$. Observe that "the special parameters" are not contained in a
Zariski closed set but in a countable union of them. Note also that
$I_A$ is Cohen-Macaulay and then it is known that the set of
rank-jumping parameters is empty.
\end{ejem}

\section{Regular Triangulations and
$(A,\mathbf{s})$-umbrellas}\label{triangulations-umbrella}

The aims of this section are to compare the notion of
$(A,\mathbf{s})$-umbrella in \cite{SW} with the one of regular
triangulation of the matrix $A$ (see for example \cite{Sturm}), to
show that the common domain of definition of the constructed Gevrey
series solutions $\phi_{\sigma}^{\mathbf{k}}$ is nonempty when
$\sigma$ varies in a regular triangulation and to prove the
existence of convenient regular triangulations.

\vspace{.3cm}

For any subset $\sigma \subseteq \{ 1,\ldots ,n \}$ we will write
$\operatorname{pos}(\sigma )=\sum_{i\in \sigma} \R_{\geq 0} a_i
\subseteq \R^d$. Recall that we identify $\sigma$ with $\{a_i :\:
i\in \sigma \}$.

\begin{defi}
A triangulation of $A$ is a set $\mathrm{T}$ whose elements are
subsets of columns of $A$ verifying:

\begin{enumerate}
\item[1)] $\{\operatorname{pos}(\sigma ): \; \sigma \in
\mathrm{T}\}$ is a simplicial fan.

\item[2)] $\operatorname{pos}(A)= \cup_{\sigma \in
\mathrm{T}} \operatorname{pos}(\sigma )$.
\end{enumerate}
\end{defi}

\noindent A vector $\omega \in \R^n$ defines a collection
$\mathrm{T}_{\omega}$ of subsets of columns of $A$ as follows:

\vspace{.3cm}

\noindent $\sigma \subseteq \{a_1,\ldots ,a_n \}$ belongs to
$\mathrm{T}_{\omega}$ if there exists a vector $\mathbf{c}\in \R^d$
such that

$$\langle \mathbf{c} , a_j \rangle =\omega_j  \mbox{ for all } j\in \sigma
$$ and $$\langle \mathbf{c} , a_j \rangle < \omega_j \mbox{ for all } j \notin \sigma .$$

\begin{nota}
We will say that $\omega \in \R^n$ is generic when  the collection
$\mathrm{T}_{\omega}$ is a simplicial complex and a triangulation of
$A$.
\end{nota}

\begin{defi}
A triangulation $\mathrm{T}$ is said to be regular if there exists a
generic $\omega \in \R^n$ such that $\mathrm{T} =
\mathrm{T}_{\omega}$.
\end{defi}

Observe that the collection $\{ \operatorname{pos}(\sigma ):  \;
\sigma \in \Phi_A^{\mathbf{s}} \}$ is a polyhedral fan. When
$\mathbf{s} \in \R^n_{>0}$ is generic it is a simplicial fan and so
$ \Phi_A^{\mathbf{s}}$ is a triangulation of $A$. In fact it is a
regular triangulation because for any $\mathbf{s}\in \R^n_{>0}$ we
have that $\Phi_{A}^{\mathbf{s}}=\mathrm{T}_{\mathbf{s}}$:

\vspace{.3cm}

$\sigma \in \Phi_{A}^{\mathbf{s}}\Longleftrightarrow \exists
\mathbf{c}\in \R^d | \; \langle \mathbf{c} , a_i /s_i \rangle=1, \;
\forall i\in \sigma$, and $\langle \mathbf{c} , a_i /s_i \rangle <1,
\; \forall i\notin \sigma \Longleftrightarrow \exists \mathbf{c}|
\;\langle \mathbf{c} , a_i \rangle=s_i , \; \forall i\in \sigma$ and
$\langle \mathbf{c} , a_i \rangle < s_i , \; \forall i\notin \sigma
\Longleftrightarrow \sigma \in \mathrm{T}_{\mathbf{s}} $.

\vspace{.3cm}

Given a ($d-1$)-simplex $\sigma \in \mathrm{T}_{\omega }$ there
exists $\mathbf{c}\in \R^d$ such that
$\mathbf{c}A_{\sigma}=\omega_{\sigma}$ and
$\mathbf{c}A_{\overline{\sigma}}<\omega_{\overline{\sigma}}$. This
is equivalent to:

$$\mathbf{c}= \omega_{\sigma} A_{\sigma}^{-1},\;
\omega_{\sigma}A_{\sigma}^{-1}A_{\overline{\sigma}} <
\omega_{\overline{\sigma}}.$$ But this happens if and only if
$\omega \in C(\sigma ):=\{ \omega \in \R^n :\; \omega B_{\sigma }>
0\}$ which is an open convex polyhedral rational cone of dimension
$n$. Then we can write
$$C(\sigma )=\{\omega \in \R^n : \; \sigma \in
\mathrm{T}_{\omega}\}$$ and for any regular triangulation
$\mathrm{T} = \mathrm{T}_{\omega_0 }$ we have
$$\omega_0 \in C(\mathrm{T}):= \bigcap_{\sigma \in \mathrm{T}} C(\sigma ).$$ Hence
$C(\mathrm{T})= \{ \omega \in \R^{n} :\;
\mathrm{T}_{\omega}=\mathrm{T} \}$ is a nonempty open rational
convex polyhedral cone. It is clear that $\cup_{\mathrm{T}}
\overline{C(\mathrm{T})}=\R^{n}$ where $\mathrm{T}$ runs over all
regular triangulations of $A$ and $\overline{C(\mathrm{T})}$ denotes
the Euclidean closure of $C(\mathrm{T})$. More precisely, there
exists a polyhedral fan with support $\R^{n}$ such that
$\mathrm{T}_{\omega}$ is constant for $\omega\in \R^{n}$ running in
any relatively open cone of this polyhedral fan. We also can
restrict this fan to $\R^{n}_{>0}$ and obtain that the
$(A,\mathbf{s})$-umbrella is constant for $\mathbf{s}\in
\R^{n}_{>0}$ running in any relatively open cone.

\vspace{.3cm}

Recall from (\ref{domain}) that $$U_{\sigma} =\{x\in \C^n : \;
\prod_{i\in \sigma} x_{i} \neq 0 , \; (-\log |x_1 |,\ldots ,-\log
|x_n |)B_{\sigma , j } > -\log R, \; \forall a_j \in
H_{\sigma}\setminus \sigma \}$$ where $B_{\sigma , j }$ is the
$j$-th column of $B_{\sigma }$, i.e. the vector with
$\sigma$-coordinates $-A_{\sigma}^{-1}a_j$ and
$\overline{\sigma}$-coordinates equal to the $j$-th column of the
identity matrix of order $n-d$. Then $U_{\sigma}$ contains those
points $x\in \C^n \cap \{\prod_{i\in \sigma } x_i \neq 0\}$ for
which $$(-\log |x_1 |,\ldots ,-\log |x_n |)$$ lies in a sufficiently
far translation of the cone $C(\sigma )$ inside itself. Then for any
regular triangulation $\mathrm{T}$ of $A$ we have that $\cap_{\sigma
\in \mathrm{T} } U_{\sigma} $ is a nonempty open set since it
contains those points $x\in \C^n \cap \{ \prod_{i\in \sigma } x_i
\neq 0 :\; \sigma \in \mathrm{T}\}$ for which $(-\log |x_1 |,\ldots
,-\log |x_n |)\in \R^n$ lies in a sufficiently far translation of
the nonempty open cone $C( \mathrm{T}  )$ inside itself.

\vspace{.3cm}

\begin{lema}\label{lema-existence-triangulation}
Given a full rank matrix $A\in \Z^{d\times n}$ with $d\leq n$ and a
lattice $\Lambda$ with $A\subseteq \Lambda  \subseteq \Z^d$ there
exists a regular triangulation $\mathrm{T}$ of $A$ such that
\begin{equation}
\operatorname{vol}_{\Lambda}(\Delta_{A})=\sum_{\sigma \in
\mathrm{T}, \dim \sigma = d-1} \operatorname{vol}_{\Lambda}
(\Delta_{\sigma}) \label{condition-triangulation}
\end{equation}
\end{lema}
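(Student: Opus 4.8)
The plan is to obtain $\mathrm{T}$ as the ``star at the origin'' of a suitable regular triangulation of the polytope $\Delta_{A}=\operatorname{conv}(\{a_{1},\dots,a_{n}\}\cup\{\mathbf 0\})$, and then to read off (\ref{condition-triangulation}) as the additivity of the Euclidean volume over the maximal cells of that triangulation; at the end one divides by $[\Z^{d}:\Lambda]$, since $\operatorname{vol}_{\Lambda}$ is a fixed positive multiple of $\operatorname{vol}$. I work throughout with the enlarged point configuration $\mathcal B=\{a_{0},a_{1},\dots,a_{n}\}\subseteq\R^{d}$, where $a_{0}:=\mathbf 0$, so that $\operatorname{conv}(\mathcal B)=\Delta_{A}$; note $\Delta_{A}$ is genuinely $d$-dimensional because $A$ has full rank. (The statement is trivial when $n=d$, so one may assume $n>d$.)

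The key geometric input I would prove first is: \emph{there is a regular triangulation $\widehat{\mathrm T}$ of $\mathcal B$ in which $a_{0}$ is a vertex of every maximal simplex.} For this, choose a lifting vector $\nu=(\nu_{0},\nu_{1},\dots,\nu_{n})\in\R^{n+1}$ subject to two conditions: $\nu$ is generic (so the induced regular subdivision of $\mathcal B$ is a triangulation), and $\nu_{0}<m$, where $m\in\R\cup\{+\infty\}$ is the minimum of $\ell_{W}(\mathbf 0)$ over the finitely many $W\subseteq\{1,\dots,n\}$ with $|W|=d+1$ and $\{a_{i}:i\in W\}$ affinely independent, $\ell_{W}$ being the unique affine form on $\R^{d}$ with $\ell_{W}(a_{i})=\nu_{i}$ for $i\in W$. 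Such $\nu$ exist because the two conditions cut out, respectively, a dense subset and a nonempty open subset of $\R^{n+1}$. Let $\widehat{\mathrm T}$ be the associated regular triangulation; its maximal cells are the $\operatorname{conv}(\{a_{i}:i\in W'\})$ with $W'\subseteq\{0,1,\dots,n\}$, $|W'|=d+1$, for which the affine form agreeing with $\nu$ on $W'$ lies weakly below $\nu$ on all of $\mathcal B$. If such a $W'$ did not contain $0$, then $W'\subseteq\{1,\dots,n\}$ is one of the sets above, the supporting form equals $\ell_{W'}$, and evaluating at $a_{0}=\mathbf 0$ yields $\ell_{W'}(\mathbf 0)\le\nu_{0}<m\le\ell_{W'}(\mathbf 0)$, a contradiction; hence $a_{0}$ lies in every maximal cell.

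Next I would transfer $\widehat{\mathrm T}$ to a regular triangulation of $A$. Put $\omega_{j}:=\nu_{j}-\nu_{0}$ for $j=1,\dots,n$ and set $\mathrm T:=\mathrm T_{\omega}$. Unwinding the definition in Section \ref{triangulations-umbrella}, for $\sigma\subseteq\{1,\dots,n\}$ the relation $\sigma\in\mathrm T_{\omega}$ — existence of $\mathbf c\in\R^{d}$ with $\langle\mathbf c,a_{i}\rangle=\omega_{i}$ for $i\in\sigma$ and $\langle\mathbf c,a_{i}\rangle<\omega_{i}$ otherwise — is, via the affine form $x\mapsto\langle\mathbf c,x\rangle+\nu_{0}$, equivalent to the condition that $\operatorname{conv}(\{a_{0}\}\cup\{a_{i}:i\in\sigma\})$ be a cell of $\widehat{\mathrm T}$. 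Since every maximal cell of $\widehat{\mathrm T}$ contains $a_{0}$, the cones $\operatorname{pos}(\sigma)=\operatorname{pos}(\{a_{0}\}\cup\{a_{i}:i\in\sigma\})$, $\sigma\in\mathrm T_{\omega}$, are exactly the cones over the cells of $\widehat{\mathrm T}$; a standard check — the facet $\operatorname{conv}(\sigma)$ of a maximal cell $\operatorname{conv}(\{a_{0}\}\cup\sigma)$ opposite to $a_{0}$ cannot be shared with another cell, so it lies on $\partial\Delta_{A}$, and radial projection from $a_{0}$ identifies $\widehat{\mathrm T}$ with a triangulation of $\operatorname{pos}(A)\cap S^{d-1}$ — shows these cones form a simplicial fan with support $\operatorname{pos}(\Delta_{A})=\operatorname{pos}(A)$, the last equality because $a_{i}\in\Delta_{A}\subseteq\operatorname{pos}(A)$ for every $i$. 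Thus $\mathrm T_{\omega}$ is a triangulation of $A$, and it is regular since $\omega$ (equivalently $\nu$) is generic.

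Finally, each $(d-1)$-dimensional $\sigma\in\mathrm T_{\omega}$ has $\{a_{0}\}\cup\{a_{i}:i\in\sigma\}$ affinely independent, hence $\det(A_{\sigma})\neq 0$, and $\Delta_{\sigma}=\operatorname{conv}(\{a_{i}:i\in\sigma\}\cup\{\mathbf 0\})$ is exactly the corresponding maximal cell of $\widehat{\mathrm T}$. As these maximal cells tile $\Delta_{A}$ with pairwise disjoint interiors, $\operatorname{vol}(\Delta_{A})=\sum_{\sigma\in\mathrm T_{\omega},\,\dim\sigma=d-1}\operatorname{vol}(\Delta_{\sigma})$, and multiplying by $d!/[\Z^{d}:\Lambda]$ gives (\ref{condition-triangulation}). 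I expect the real obstacle to be the first step — producing a regular triangulation of $\Delta_{A}$ with the origin inside every maximal cell, especially in the non-pointed case, where $\mathbf 0$ need not be a vertex of $\Delta_{A}$; the uniform bound $\ell_{W'}(\mathbf 0)\ge m$ over the finitely many candidate $W'$ is precisely what makes ``lifting the origin far below'' work, and the remaining verifications (the fan property, regularity in the sense of Section \ref{triangulations-umbrella}) are routine bookkeeping with the dictionary between regular triangulations of $\mathcal B$ through $a_{0}$ and regular triangulations of the cone $\operatorname{pos}(A)$.
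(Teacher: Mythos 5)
Your proof is correct and reaches the result by a genuinely different route from the paper. The paper starts from the weight vector $\omega=(1,\ldots,1)$, whose fan is the collection of cones over the facets of $\Delta_A$ not through the origin, and then iteratively replaces $a_i$ by $a_i/(1-\epsilon)$ for carefully chosen indices $i$ to split non-simplicial facets into simplicial ones, checking at each step that the vertex set of $\Delta_A$ (hence $\Delta_A$ itself and its volume) is unchanged; the end result is $T_\omega$ for some $\omega\in\R_{>0}^n$. You instead lift the origin far below the other lifted points of the extended configuration $\mathcal B=\{\mathbf 0\}\cup\{a_1,\ldots,a_n\}$, obtain a regular triangulation $\widehat{\mathrm T}$ of $\Delta_A$ that is a star at $a_0$, and read off $T_\omega$ with $\omega_j=\nu_j-\nu_0$ as the fan of cones over its cells — a one-shot ``pulling/placing''-type construction. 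What each buys: the paper's iterative perturbation sits naturally inside the $(A,\mathbf{s})$-umbrella framework used throughout the article and exhibits $\mathrm T$ as a small deformation of the facet fan, at the cost of leaving the termination of the iteration somewhat informal; your version is tighter and more standard, and subsumes the pointed and non-pointed cases in one uniform argument. Two bookkeeping points worth sharpening: (i) the ``generic $\nu$'' set should be noted to be dense \emph{open} (complement of finitely many hyperplanes), so it indeed meets the nonempty open set $\{\nu_0<m\}$ — a dense set alone need not; (ii) the fan property for $\{\operatorname{pos}(\sigma):\sigma\in T_\omega\}$ is perhaps cleanest obtained by observing that in a small ball around $\mathbf 0$ each simplex $\operatorname{conv}(\{a_0\}\cup\sigma)$ agrees with $\operatorname{pos}(\sigma)$ and $\Delta_A$ agrees with $\operatorname{pos}(A)$, so the fan property of the cones inherits directly from the simplicial-complex property of $\widehat{\mathrm T}$, sidestepping the radial-projection discussion which is slightly delicate when $\mathbf 0$ is not a vertex of $\Delta_A$.
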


\begin{proof}
The volume function with respect to a lattice $\Lambda$ is nothing
but the Euclidean volume function normalized so that the unit
simplex in $\Lambda$ has volume one. Hence, it is enough to proof
the result for the Euclidean volume.

\vspace{.3cm}

If $A$ is such that the facets of $\Delta_A$ contain exactly $d$
columns of $A$, then they are simplicial facets and so the regular
triangulation $T_{\omega}$ with $\omega =(\omega_1, \ldots
,\omega_n)$ and $\omega_i =1$, $\forall i$,  verifies the desired
condition.

\vspace{.3cm}

Assume now that there exists $\tau \in T_{\omega}$ with at least
cardinal $d+1$. Then we can take $i \in \tau$ such that the columns
of $A$ in $\tau \setminus \{i \}$ determines a hyperplane
$H_{\tau}$. Consider all the hyperplanes $H_{\tau'}\neq H_{\tau}$
determined by facets $\tau'$ of $\Delta_A$ not containing neither
the origin nor $a_i$. Then $a_i$ lies in the interior of the
polytope $\cap_{\tau '} (H_{\tau '} \cup H_{\tau '}^{-})$ and so
$\frac{a_i}{1-\epsilon}$ does too for $\epsilon
>0$ small enough. This means that we do not modify the facets that
not contain $a_i$ via replacing it by  $\frac{a_i}{1-\epsilon}$. The
facets $\tau'$ containing $a_i$ such that $\tau '\setminus \{a_i \}$
does not determine $H_{\tau '}$ are not modified neither. We just
modify the facets $\tau'$ of $\Delta_A$ that contain $a_i$ and $\tau
' \setminus \{a_i \}$ determine $H_{\tau '}$. Such a kind of facet
is replaced by more than one facet with vertices contained in
$\tau'$ and hence each of the new facets contain less columns of $A$
than the original one. This process finishes in a finite number of
steps and yields to a polytope with simplicial facets. We have
replaced each $a_i$ by $a_i /\omega_i$ with $\omega_i \in \R_{>0}$
and this is equivalent to consider $T_{\omega}$ with
$\omega=(\omega_1 ,\ldots ,\omega_n )\in \R_{>0}^n$. Moreover, we do
not modify the set of vertices and thus $T_{\omega}$ satisfies
(\ref{condition-triangulation}).
\end{proof}

\section{Gevrey solutions of $\M_A (\beta )$ along coordinate subspaces}

\subsection{Lower bound for the dimension}\label{basis-Gevrey}

\indent In this section we provide an optimal lower bound in terms
of volumes of polytopes of the dimension of $\mathcal{H}om_{\D}
(\M_A (\beta ), \cO_{X|Y_{\tau}}(s))_p$, $s\in \R$, for generic
points $p\in Y_{\tau}=\{x_i =0:\; i\notin \tau \}$ and for all
$\beta \in \C^d$.

\vspace{.3cm}

Consider the submatrix $A_{\tau}=(a_i )_{i\in \tau}$ of $A$. If the
rank of $A_{\tau}$ is $d$ then there exists a regular triangulation
$\mathrm{T}(\tau )$ of $A_{\tau}$ such that

\begin{equation}
\operatorname{vol}_{\Z A}(\Delta_{\tau})=\sum_{\sigma \in
\mathrm{T}(\tau ), \dim \sigma =d-1} \operatorname{vol}_{\Z
A}(\Delta_{\sigma}) \label{condition-triangulation-2}
\end{equation} because of Lemma \ref{lema-existence-triangulation}.
If the rank of $A_{\tau}$ is lower than $d$ then this equality holds
for any regular triangulation of the matrix $A_{\tau}$ since all the
volumes in (\ref{condition-triangulation-2}) are zero.

\vspace{.3cm}

In this section we shall identify $s\in \R_{>0}$ with $(s_1 ,\ldots
,s_n )$ such that $s_i =1$ for $i\in \tau$ and $s_i =s$ for $i\notin
\tau$. Since $A_{\tau }$ is a submatrix of $A$ there exists a
regular triangulation $\mathrm{T}$ of $A$ such that $\mathrm{T}(\tau
)\subseteq \mathrm{T}$.

\vspace{.3cm}

For all $s\in \R$ we consider the following subset of
$\mathrm{T}(\tau )$:

$$\mathrm{T}(\tau , s):=\{\sigma \in
\mathrm{T}(\tau ): \; \dim (\sigma )=d-1, \; a_j /s \notin
H_{\sigma}^{+} \; \forall j\notin \tau \}.$$

The following theorem is the main result in this section.

\begin{teo}\label{GevreyII}
For all $\tau\subseteq \{1,\ldots ,n\}$,
\begin{equation}
\dim_{\C }\mathcal{H}om_{\D} (\M_A (\beta ), \cO_{\widehat{X|Y_{\tau}}})_p
\geq \operatorname{vol}_{\Z A }(\Delta_{\tau})\label{lower-bound}
\end{equation} for $p$ in the nonempty relatively open set $W_{\mathrm{T}(\tau )}
:=Y_{\tau}\cap (\bigcap_{\sigma \in \mathrm{T}(\tau )}U_{\sigma})$.
More precisely,

\begin{equation}
\dim_{\C }\mathcal{H}om_{\D} (\M_A (\beta ), \cO_{X|Y_{\tau}}(s))_p
\geq \sum_{\sigma \in \mathrm{T}(\tau , s)} \operatorname{vol}_{\Z
A}(\Delta_{\sigma} ) \label{lower-bound-s}
\end{equation} for all $s\in \R$ and $p$ in the nonempty
relatively open set $W_{\mathrm{T}(\tau , s)} :=Y_{\tau} \cap
(\bigcap_{\sigma \in \mathrm{T}(\tau , s)} U_{\sigma})$.
\end{teo}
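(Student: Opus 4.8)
The strategy is to assemble the global lower bound from the local contributions of each $(d-1)$-simplex $\sigma$ in the regular triangulation $\mathrm{T}(\tau)$, using the Gevrey series $\phi_\sigma^{\mathbf{k}}$ constructed in Section \ref{construction-Gevrey} and the machinery of Section \ref{slopes-simplex-section}. First I would fix the regular triangulation $\mathrm{T}(\tau)$ of $A_\tau$ satisfying \eqref{condition-triangulation-2}, which exists by Lemma \ref{lema-existence-triangulation}, and recall from Section \ref{triangulations-umbrella} that $\bigcap_{\sigma\in\mathrm{T}(\tau)} U_\sigma$ is a nonempty open set (it contains all $x$ whose $-\log|x_i|$ lie deep inside the cone $C(\mathrm{T}(\tau))$), so that $W_{\mathrm{T}(\tau)}$ and $W_{\mathrm{T}(\tau,s)}$ are nonempty relatively open subsets of $Y_\tau$. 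For each $(d-1)$-simplex $\sigma\in\mathrm{T}(\tau)$ we have by Remark \ref{disjoint-supports-2} exactly $\operatorname{vol}_{\Z A}(\Delta_\sigma)$ linearly independent formal solutions $\phi_\sigma^{\mathbf{k}}$ of $\M_A(\beta)$ along $Y_\sigma$ (equivalently along $Y_\tau$, since $Y_\sigma\subseteq Y_\tau$), and by Theorem \ref{GevreyI} these are Gevrey of order $s=\max\{|A_\sigma^{-1}a_i|:i\notin\sigma\}$ along $Y_\tau$ at any point of $Y_\tau\cap U_\sigma$; moreover, by the condition defining $\mathrm{T}(\tau,s)$ and the discussion preceding Theorem \ref{GevreyI} (taking the set $\varsigma$ there to contain $\tau$), when $\sigma\in\mathrm{T}(\tau,s)$ all the relevant exponents satisfy $|A_\sigma^{-1}a_j|\le s$ for $j\notin\tau$, so each $\phi_\sigma^{\mathbf{k}}$ actually lies in $\cO_{X|Y_\tau}(s)_p$ for $p\in W_{\mathrm{T}(\tau,s)}$.

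The key point is that solutions coming from \emph{different} simplices $\sigma,\sigma'\in\mathrm{T}(\tau)$ are jointly linearly independent. I would prove this via supports: restricted to the variables $x_i$ with $i\notin\sigma$, the support of $\phi_\sigma^{\mathbf{k}}$ lies in $\Lambda_{\mathbf{k}}\subseteq\N^{|\bar\sigma|}$, but more usefully each $\phi_\sigma^{\mathbf{k}}$ has the form $x_\sigma^{A_\sigma^{-1}\beta}$ times a series in the monomials $x_\sigma^{-A_\sigma^{-1}a_j}x_j$ for $j\notin\sigma$. Because $\mathrm{T}(\tau)$ is a triangulation, the cones $\operatorname{pos}(\sigma)$ intersect only along common faces; translating this into the language of the exponents (as in the proof of Lemma \ref{radinL}, item (i), where distinct facets give $\operatorname{pos}(\varsigma)\cap\operatorname{pos}(\varsigma')=\{\mathbf 0\}$) shows that the Newton polyhedra — i.e. the sets of exponent vectors — of $\phi_\sigma^{\mathbf{k}}$ and $\phi_{\sigma'}^{\mathbf{k}'}$ for $\sigma\ne\sigma'$ meet only in a lower-dimensional locus, and in fact one can arrange (after the standard reduction choosing $\mathbf{k}$ with minimal negative support) that the leading exponents are distinct. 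A clean way to package this is to observe that the exponent $A_\sigma^{-1}\beta$ modulo $\ZZ$ together with the direction of the cone $\operatorname{pos}(\sigma)$ distinguishes the series; a linear dependence $\sum_\sigma\sum_{\mathbf{k}} c_{\sigma,\mathbf{k}}\phi_\sigma^{\mathbf{k}}=0$ restricted to any maximal cone forces each group indexed by a fixed $\sigma$ to vanish separately, and then Remark \ref{disjoint-supports-2} kills the remaining coefficients. Summing the counts gives $\sum_{\sigma\in\mathrm{T}(\tau)}\operatorname{vol}_{\Z A}(\Delta_\sigma)=\operatorname{vol}_{\Z A}(\Delta_\tau)$ by \eqref{condition-triangulation-2}, proving \eqref{lower-bound}, and $\sum_{\sigma\in\mathrm{T}(\tau,s)}\operatorname{vol}_{\Z A}(\Delta_\sigma)$ for \eqref{lower-bound-s}.

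For the refined statement \eqref{lower-bound-s} I would additionally note that for $\sigma\notin\mathrm{T}(\tau,s)$ there is some $j\notin\tau$ with $a_j/s\in H_\sigma^+$, i.e. $|A_\sigma^{-1}a_j|>s$, so by Theorem \ref{GevreyI} the series $\phi_\sigma^{\mathbf{k}}$ has Gevrey order $>s$ (and, for very generic $\beta$, Gevrey index $>s$) along $Y_\tau$, hence does not belong to $\cO_{X|Y_\tau}(s)_p$; so only the simplices in $\mathrm{T}(\tau,s)$ contribute, and for those we already checked membership in $\cO_{X|Y_\tau}(s)_p$ above. Finally one must check that $W_{\mathrm{T}(\tau,s)}$ is nonempty: this follows because $\bigcap_{\sigma\in\mathrm{T}(\tau,s)}C(\sigma)\supseteq C(\mathrm{T}(\tau))\ne\emptyset$, so a sufficiently deep translate of $C(\mathrm{T}(\tau))$ inside itself gives points of $Y_\tau$ in all the relevant $U_\sigma$. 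The main obstacle I anticipate is making the cross-simplex linear independence fully rigorous: one needs to argue carefully that although two simplices may share a facet (so their solutions' supports are not literally disjoint), a nontrivial linear relation is still impossible — this requires exploiting both the triangulation property (cones overlap only in faces) and the explicit monomial structure $x_\sigma^{A_\sigma^{-1}(\beta-A_{\bar\sigma}(\mathbf{k}+\mathbf{m}))}x_{\bar\sigma}^{\mathbf{k}+\mathbf{m}}$ of the series, localizing to the open set $W_{\mathrm{T}(\tau)}$ where each $\phi_\sigma^{\mathbf{k}}$ genuinely converges after applying $\rho_{\mathbf{s}}$. Everything else is bookkeeping with volumes and with the definitions of $U_\sigma$ and $C(\sigma)$ recalled in Section \ref{triangulations-umbrella}.
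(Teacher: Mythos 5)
Your plan matches the paper's proof up to and including the case of very generic $\beta$: you fix the triangulation $\mathrm{T}(\tau)$ via Lemma~\ref{lema-existence-triangulation}, observe that $W_{\mathrm{T}(\tau)}$ and $W_{\mathrm{T}(\tau,s)}$ are nonempty by the cone argument of Section~\ref{triangulations-umbrella}, and for each $\sigma\in\mathrm{T}(\tau,s)$ harvest $\operatorname{vol}_{\Z A}(\Delta_{\sigma})$ solutions $\phi_{\sigma}^{\mathbf{k}}$ of the required Gevrey order via Theorem~\ref{GevreyI}. When $\beta$ is very generic, the pairwise-disjoint-supports argument for series coming from the \emph{same} $\sigma$ (Remark~\ref{disjoint-supports-2}) extends across different simplices, and the volumes add up by~(\ref{condition-triangulation-2}). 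So far so good.

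The genuine gap is what happens when $\beta$ is \emph{not} very generic. You anticipate the difficulty (``the main obstacle I anticipate is making the cross-simplex linear independence fully rigorous'') but propose to resolve it with a cone/support-restriction argument: distinct simplices give distinct directional cones, so a linear dependence restricted to a maximal cone would force each block to vanish. This does not work, because the obstruction is not geometric distinguishability but actual \emph{collision}: for non-very-generic $\beta$, two series $\phi_{\sigma}^{\mathbf{k}}$ and $\phi_{\sigma'}^{\mathbf{k}'}$ associated with different simplices $\sigma\ne\sigma'$ can be literally equal up to a nonzero scalar (their exponent vectors $v_{\sigma}^{\mathbf{k}}(\beta)$ and $v_{\sigma'}^{\mathbf{k}'}(\beta)$ can differ by an element of $L_A$, as happens when some coordinates of $A_{\sigma}^{-1}\beta$ become integers). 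In that situation the set $\{\phi_{\sigma}^{\mathbf{k}}:\sigma\in\mathrm{T}(\tau),\,\mathbf{k}\}$ simply has fewer than $\operatorname{vol}_{\Z A}(\Delta_{\tau})$ distinct members, and no restriction argument can repair the count. The paper closes this case by a perturbation argument modeled on \cite[Theorem~3.5.1]{SST}: replace $\beta$ by $\beta+\epsilon\beta'$ for very generic $\beta+\epsilon\beta'$, expand each $\phi_{\sigma}^{\mathbf{k}}(\beta+\epsilon\beta')$ in powers of $\epsilon$ (after multiplying by $\epsilon^{\mu}$ to clear poles), and iterate a Gaussian-elimination-like process; in the limit $\epsilon\to 0$ one recovers $\operatorname{vol}_{\Z A}(\Delta_{\tau})$ linearly independent solutions, possibly containing logarithms $\log x_i$ with $i\in\tau$, which are still Gevrey of the correct order because the logarithms involve only the $\tau$-variables. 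Without something equivalent to this perturbation-and-limit step, your proof establishes the lower bound only for very generic $\beta$, not the claim ``for all $\beta\in\C^d$.''
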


\begin{proof}
$W_{\mathrm{T}(\tau )} \subseteq W_{\mathrm{T}(\tau , s)}$ are
nonempty relatively open subsets of $Y_{\tau}$ because
$\mathrm{T}(\tau )$ is a regular triangulation of $A_{\tau}$ (see
Section \ref{triangulations-umbrella}).

\vspace{.3cm}

For each fixed $(d-1)$-simplex $\sigma \in \mathrm{T}(\tau ,s)$, we
have that $|A_{\sigma}^{-1}a_j |\leq 1 $ for all $j\in \tau$ and
$|A_{\sigma}^{-1}a_j |\leq s $ for all $j\notin \tau$ and we can
construct $\operatorname{vol}_{\Z A}(\Delta_{\sigma} )$ Gevrey
solutions of $\M_{A}(\beta )$ of order $s$ along $Y_{\tau}$ at any
point of $Y_{\tau }\cap U_{\sigma}$ (see Section
\ref{construction-Gevrey}). These $\operatorname{vol}_{\Z
A}(\Delta_{\sigma} )$ series $\{ \phi_{\sigma}^{\mathbf{k}}
\}_{\mathbf{k}}$ are linearly independent because they have pairwise
disjoint supports. The linear independency of the set of all $\operatorname{vol}_{\Z A }(\Delta_{\tau})$ series $\phi_{\sigma}^{\mathbf{k}}$
when $\sigma$ varies in $\mathrm{T}(\tau )$ is also clear if we assume that $\beta$ is very generic (because this
implies that they have pairwise disjoint supports).

\vspace{.3cm}

If $\beta$ is not very generic some of the series could be equal up
to multiplication by a nonzero scalar. In such a case one can
proceed similarly to the proof of Theorem 3.5.1. in \cite{SST}:

\vspace{.3cm}

We introduce a perturbation $\beta \mapsto \beta +\epsilon \beta '$
with $\beta '\in \C^d$ such that $\beta +\epsilon \beta '$ is very
generic for $\epsilon \in \C$ with $|\epsilon|>0$ small enough (it
is enough to consider $\beta '\in \C^d$ such that $
(A_{\sigma}^{-1}\beta')_i \neq 0$ for $i=1,\ldots ,d$ and $\sigma
\in \mathrm{T}(\tau )$).

\vspace{.3cm}

Consider the set $\{\phi_{\sigma}^{\mathbf{k}}: \; \sigma \in
\mathrm{T}(\tau ) , \mathbf{k} \in \N^{n-d}\}$ with
$\operatorname{vol}_{\Z A}(\Delta_{\tau})$ Gevrey series solutions
of $\M_A (\beta + \epsilon \beta ')$ with disjoint supports. We will
denote these series by $\phi_{\sigma}^{\mathbf{k}}(\beta + \epsilon
\beta ')$ in this proof. It is clear that
$\phi_{\sigma}^{\mathbf{k}}(\beta + \epsilon \beta
')=\phi_{v_{\sigma}^{\mathbf{k}}(\beta + \epsilon \beta ')}$ for
$$v_{\sigma}^{\mathbf{k}}(\beta + \epsilon \beta ')=
v_{\sigma}^{\mathbf{k}}(\beta)+\epsilon
v_{\sigma}^{\mathbf{0}}(\beta ').$$ Here
$v_{\sigma}^{\mathbf{k}}(\beta )$ has $\sigma$-coordinates
$A_{\sigma}^{-1}(\beta - A_{\overline{\sigma}} \mathbf{k})$ and
$\overline{\sigma}$-coordinates $\mathbf{k}$. Similarly,
$v_{\sigma}^{\mathbf{0}}(\beta ')$ has $\sigma$-coordinates
$A_{\sigma}^{-1} \beta '$ and $\overline{\sigma}$-coordinates
$\mathbf{0}$. Let $\mathrm{T}$ be a regular triangulation of $A$
such that $\mathrm{T} (\tau )\subseteq \mathrm{T}$. For any
$\phi_{\sigma}^{\mathbf{k}}(\beta )$ we can assume without loss of
generality that $v_{\sigma}^{\mathbf{k}}(\beta )$ has minimal
negative support, $\phi_{\sigma}^{\mathbf{k}}(\beta
)=\phi_{v_{\sigma}^{\mathbf{k}}(\beta )}$ and
$\operatorname{in}_{\omega}(\phi_{\sigma}^{\mathbf{k}}(\beta ))=
x^{v_{\sigma}^{\mathbf{k}}(\beta )}$ for a fixed generic $\omega \in
\mathcal{C}(\mathrm{T})$. Then for two simplices $\sigma , \sigma '
\in \mathrm{T} (\tau )$ we have that
$\phi_{v_{\sigma}^{\mathbf{k}}(\beta )}=\phi_{v_{\sigma
'}^{\mathbf{k}'}(\beta )}$ if and only if
$v_{\sigma}^{\mathbf{k}}(\beta)=v_{\sigma ' }^{\mathbf{k} '
}(\beta)$.

\vspace{.3cm}

Let us denote $\nu = \operatorname{vol}_{\Z A}(\Delta_{\tau})$ and
consider the $\nu$ linearly independent Gevrey series solutions of
$\M_A (\beta)$ along $Y_{\tau}$ of the form

$$\phi_{\sigma}^{\mathbf{k}}(\beta + \epsilon \beta ')=
\sum_{\mathbf{k}+\mathbf{m} \in \Lambda_{\mathbf{k}}}
q_{k+m}(\epsilon) x^{v_{\sigma}^{\mathbf{k}}(\beta + \epsilon \beta
')+u(\mathbf{m})}$$ where
$$q_{k+m}(\epsilon)=\frac{[v_{\sigma}^{\mathbf{k}}(\beta)+ \epsilon
v_{\sigma}^{\mathbf{0}} (\beta
')]_{u(\mathbf{m})_{-}}}{[v_{\sigma}^{\mathbf{k}}(\beta)+ \epsilon
v_{\sigma}^{\mathbf{0}} (\beta ')+ u(m)]_{u(\mathbf{m})_{-}}}$$ for
$\sigma \in \mathrm{T}(\tau )$ and $\mathbf{k}\in \N^{n-d}$
verifying that $\phi_{\sigma}^{\mathbf{k}}(\beta )=
\phi_{v_{\sigma}^{\mathbf{k}}}(\beta )$. Observe that for all
$\mathbf{k}+\mathbf{m} \in \Lambda_{\mathbf{k}}$ we can write
$$x^{v_{\sigma}^{\mathbf{k}}(\beta)+ \epsilon v_{\sigma}^{\mathbf{0}}
(\beta ')+ u(m)}=e^{\epsilon \log x_{\sigma}^{A_{\sigma}^{-1} \beta
'}} x^{v_{\sigma}^{\mathbf{k}}(\beta)}.$$ Then we have:

$$\phi_{\sigma}^{\mathbf{k}}(\beta + \epsilon \beta ')=
e^{\epsilon \log x_{\sigma}^{A_{\sigma}^{-1} \beta '}}
\sum_{\mathbf{k}+\mathbf{m} \in \Lambda_{\mathbf{k}}}
q_{k+m}(\epsilon) x^{v_{\sigma}^{\mathbf{k}}(\beta
)+u(\mathbf{m})}.$$ It is clear that
$q_{\mathbf{k}+\mathbf{m}}(\epsilon)$ is a rational function on
$\epsilon$ and it has a pole of order $\mu_{\mathbf{k}+\mathbf{m}}$
with $0\leq \mu_{\mathbf{k}+\mathbf{m}} \leq d$. On the other hand
$e^{\epsilon \log x_{\sigma}^{A_{\sigma}^{-1} \beta '}}=\sum_{l\geq
0} \frac{(\log (x_{\sigma}^{A_{\sigma}^{-1} (\beta ')}))^l
}{l!}\epsilon^l$ so we can expand the series $\epsilon^{\mu}
\phi_{\sigma}^{\mathbf{k}}(\beta +\epsilon \beta ')$ (with $\mu =
\max \{\mu_{\mathbf{k}+\mathbf{m}}\}\leq d$) and write it in the
form $\sum_{j\geq 0} \phi_{j} (x) \epsilon^j$ where $\phi_{0}
(x)\neq 0$ and $\phi_{j} (x) $ are Gevrey solutions of $\M_A (\beta
)$ along $Y_{\tau}$ that converge in a common relatively open subset
of $Y_{\tau}$ for all $j$.

\vspace{.3cm}

After a reiterative process making convenient linear combinations of
the series and dividing by convenient powers of $\epsilon$, one
obtain $\nu $ Gevrey solutions of $\M_A (\beta +\epsilon \beta ')$
of the form $\sum_{j\geq 0} \psi_{i,j} (x) \epsilon^j$ where
$\psi_{i,0} (x)\neq 0$, $i=1,\ldots ,\nu$, are linearly independent.
Then we can substitute $\epsilon = 0$ and obtain the desired $\nu$
linearly independent Gevrey series solutions of $\M_A (\beta )$. The
logarithms $\log (x_i )$ just appear for $i\in \sigma$ with $\sigma
$ varying in $\mathrm{T} (\tau )$ at any step of the process. Thus
the $\nu =\operatorname{vol}_{\Z A }(\Delta_{\tau})$ final series
just have logarithms $\log (x_i )$ with $i\in \tau$. Hence these
series are Gevrey series solutions of $\M_A (\beta )$ along
$Y_{\tau}$ at points of $W_{\mathrm{T}(\tau )}$. Moreover, it is
clear that the Gevrey index cannot increase with this process.
\end{proof}

\begin{nota}\label{nota-DM}
The proof of Proposition 5.2. in \cite{Saito} guarantees that all
the series solutions obtained after the process that we mention in
the proof of Theorem \ref{GevreyII} have the form
$$\sum_{v} g_v (log (x_i ): \; i\in \tau) x^v$$ with $g_v (y_{\tau })$ a
polynomial in $\C [ y_{\tau}^u : \; u\in L_{A_{\tau}} ]$.
\end{nota}

\begin{nota}
Theorem \ref{GevreyII} generalizes \cite[Theorem 3.5.1]{SST} and
\cite[Corollary 1]{T} (taking $\tau=\{1,\ldots ,n\} $ and $s=1$ in
(\ref{lower-bound-s})), that establish that the holonomic rank of a
hypergeometric system (i.e. the dimension of the space of
holomorphic solutions at nonsingular points) is greater than or
equal to $\operatorname{vol}_{\Z A}(\Delta_A )$. A more precise
statement than \cite[Corollary 1]{T} is given in \cite{MMW}: the
holonomic rank is upper semi-continuous in $\beta$ for holonomic
families, including hypergeometric systems $\M_A (\beta )$ with $A$
a pointed matrix.
\end{nota}

\begin{nota}
Different regular triangulations $\mathrm{T}(\tau )$ of $A_{\tau}$
verifying the condition (\ref{condition-triangulation-2}) will
produce different sets with $\operatorname{vol}_{\Z A} (\Delta_{\tau
})$ linearly independent solutions of $\M_A (\beta )$ in
$\cO_{\widehat{X|Y_{\tau }},p}$ for $p$ in pairwise disjoint open
subsets $W_{\mathbf{T(\tau )}}$ of $Y_{\tau}$. It is natural to ask
whether $\cup_{\mathbf{T (\tau )}}\overline{W_{\mathbf{T(\tau
)}}}=Y_{\tau}$ for $\mathbf{T (\tau )}$ running over all possible
regular triangulations $\mathbf{T(\tau )}$ of $A_{\tau }$ verifying
(\ref{condition-triangulation}). We have that $\cup_{\mathbf{T (\tau
)}}\overline{C(\mathbf{T (\tau )})}=\R^{l}$ and that there exists
$w,w'\in C(\mathbf{T (\tau )})$ verifying two-sided Abel lemma:
$$w+C(\mathbf{T (\tau )})\subseteq - \operatorname{Log}
W_{\mathrm{T}(\tau )}\subseteq w'+C(\mathbf{T(\tau )})$$ where
$\operatorname{Log}: \C^{l} \longrightarrow \R^{l}$ is the map
$\operatorname{Log}(x_1 ,\ldots ,x_l )=(\log |x_1 | ,\ldots ,\log
|x_l |)$. This should be contrasted with \cite[Lemma 11]{PST}.
\end{nota}

\begin{nota}
If there are no more than $d$ columns of $A_{\tau}$ in the same
facet of $\Delta_{\tau}$ then by Theorem \ref{GevreyI} all the
series above are Gevrey of the corresponding order along $Y_{\tau}$
at any point of $Y_{\tau}\cap (\cap_{\sigma \in \mathrm{T}(\tau )}
\{ \prod_{i\in \sigma } x_{i}\neq 0\})$.
\end{nota}

\begin{nota}
An anonymous referee of the paper \cite{FC2} made us the following
question. Is there some understanding how Gevrey solutions of
$\M_A(\beta)$ relate to solutions of $\M_{A^h}(\beta^h)$ with $A^h$
the matrix obtained from A by adding a row of 1's and then a column
equal to the first unit vector? The idea is to consider a regular
triangulation $\mathrm{T}$ for the matrix $A^h$ containing a regular
triangulation $\mathrm{T} (\tau)$ of $A_{\tau}^h$. For any simplex
$\sigma \in T ( \tau )$, the dehomogenization (in the sense of
\cite[Definition 2]{Ohara-Takayama}) of the holomorphic solutions
$\phi_{\sigma}^{\mathbf{k}}$ of $M_{A^h}(\beta^h)$ are Gevrey
solutions of $M_A(\beta)$ with respect to $Y_\tau$. We will give
more details about this subject in a forthcoming paper.
\end{nota}

\subsection{Dimension for very generic parameters}\label{basis-Gevrey2}

Let $\tau\subseteq \{ 1, \ldots , n\}$ be a subset with cardinal
$l$, $1\leq l\leq n-1$, and recall that we denote $Y_{\tau}=\{x_i
=0:\; i\notin \tau \}$.

\vspace{.5cm}

The aim of this section is to prove the following result:

\begin{teo}\label{theorem-dimension-formal}
For generic $p\in Y_{\tau}$ and very generic $\beta$,
$$\dim_{\C}\mathcal{H}om (\M_A (\beta ),\cO_{\widehat{X|Y_{\tau}}})_p
=\operatorname{vol}_{\Z A} (\Delta_{\tau}).$$
\end{teo}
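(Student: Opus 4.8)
The lower bound $\dim_{\C}\mathcal{H}om_{\D}(\M_A(\beta),\cO_{\widehat{X|Y_{\tau}}})_p\geq\operatorname{vol}_{\Z A}(\Delta_{\tau})$ is the case $s=+\infty$ of Theorem~\ref{GevreyII}: on the nonempty relatively open set $W_{\mathrm{T}(\tau)}\subseteq Y_{\tau}$ the $\operatorname{vol}_{\Z A}(\Delta_{\tau})$ series $\phi_{\sigma}^{\mathbf{k}}$, with $\sigma$ running over the $(d-1)$-simplices of a regular triangulation $\mathrm{T}(\tau)$ of $A_{\tau}$ satisfying (\ref{condition-triangulation-2}), are formal solutions, and for very generic $\beta$ they have pairwise disjoint supports, hence are linearly independent. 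So the task is the reverse inequality at a generic point $p\in Y_{\tau}$. If $\operatorname{rank}(A_{\tau})<d$ then $\operatorname{vol}_{\Z A}(\Delta_{\tau})=0$ and I first check that there are no nonzero formal solutions: choosing $\lambda\in\Z^{d}\setminus\{0\}$ with $\lambda A_{\tau}=0$, the operator $\sum_{i}\lambda_{i}(E_{i}-\beta_{i})=\sum_{j\notin\tau}b_{j}x_{j}\partial_{j}-c$ (with $b=\lambda A_{\overline{\tau}}\in\Z^{n-l}$, $c=\langle\lambda,\beta\rangle$) lies in $H_{A}(\beta)$ and multiplies the $\alpha$-th coefficient of $f=\sum_{\alpha\in\N^{n-l}}f_{\alpha}(x_{\tau})x_{\overline{\tau}}^{\alpha}$ by $\langle b,\alpha\rangle-c$; since $\beta$ is very generic, $c\notin\Z$ while $\langle b,\alpha\rangle\in\Z$ for all $\alpha$, so $f=0$.

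From now on $\operatorname{rank}(A_{\tau})=d$. The plan is the indicial/leading-exponent method of \cite[Ch.~2--3]{SST}. Fix a generic weight $\omega\in\R^{n}_{>0}$ whose coordinates on $\tau$ are infinitesimally small relative to those on $\overline{\tau}$, so that $(-\omega,\omega)$ is a generic refinement of the weight defining the Malgrange--Kashiwara filtration $V_{\tau}$ and the triangulation $\mathrm{T}_{\omega}$ of $A$ has $\mathrm{T}(\tau)$ as the set of its faces contained in $\tau$; in particular $\inww(I_{A})$ is a monomial ideal. The space of formal solutions along $Y_{\tau}$ at a generic $p$ is finite-dimensional (it is the stalk of the constructible sheaf $\mathcal{H}^{0}\operatorname{Irr}_{Y_{\tau}}(\M_{A}(\beta))$ off a proper subvariety of $Y_{\tau}$), and, for $\omega$ as above, every nonzero formal solution $f$ has a well-defined leading exponent $v^{0}(f)\in\C^{n}$ — the exponent of the $\inww$-initial monomial of $f$ — because the $(-\omega,\omega)$-order is a well-order on the Newton set of a formal $Y_{\tau}$-solution of $\M_{A}(\beta)$. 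One checks, exactly as for the holonomic rank, that $x^{v^{0}(f)}$ is annihilated by the indicial ideal $\inwwt$, so $v^{0}(f)$ is a fake exponent of $\M_{A}(\beta)$, and that distinct solutions can be chosen with distinct leading exponents (row reduction cancelling leading terms against the $\Gamma$-series of Section~\ref{construction-Gevrey}). Hence $\dim_{\C}\mathcal{H}om_{\D}(\M_{A}(\beta),\cO_{\widehat{X|Y_{\tau}}})_p$ is at most the number of fake exponents $v$ with $v_{j}\in\N$ for all $j\notin\tau$.

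It remains to count these. A fake exponent attached to a standard pair $(\partial^{b},\sigma)$ of the monomial ideal $\inww(I_{A})$ (with $\sigma$ a $(d-1)$-simplex of $\mathrm{T}_{\omega}$) has $\overline{\sigma}$-coordinates $b\in\N^{\overline{\sigma}}$ and $\sigma$-coordinates $A_{\sigma}^{-1}(\beta-A_{\overline{\sigma}}b)$. If $\sigma\not\subseteq\tau$, pick $j\in\sigma\setminus\tau$; then the $j$-th coordinate $(A_{\sigma}^{-1}(\beta-A_{\overline{\sigma}}b))_{j}$ is a fixed nonconstant $\Q$-affine function of $\beta$, hence not an integer for very generic $\beta$, so this fake exponent is not of the required type. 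If $\sigma\subseteq\tau$, then $\overline{\tau}\subseteq\overline{\sigma}$ and the $\overline{\tau}$-coordinates are entries of $b$, hence in $\N$. Thus the fake exponents of the required type are exactly those attached to standard pairs $(\partial^{b},\sigma)$ with $\sigma$ a $(d-1)$-simplex of $\mathrm{T}(\tau)$, and for each such $\sigma$ there are $\operatorname{vol}_{\Z A}(\Delta_{\sigma})$ of them, this being the multiplicity of the component $\mathcal{V}(\langle\xi_{j}:j\notin\sigma\rangle)$ of $\mathcal{V}(\inww(I_{A}))$ (cf. \cite[Sec.~3.2]{SST} and the proof of Lemma~\ref{lema-dim-n}). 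Using that $\mathrm{T}(\tau)$ satisfies (\ref{condition-triangulation-2}) we obtain
\[
\dim_{\C}\mathcal{H}om_{\D}(\M_{A}(\beta),\cO_{\widehat{X|Y_{\tau}}})_p\ \leq\ \sum_{\substack{\sigma\in\mathrm{T}(\tau)\\ \dim\sigma=d-1}}\operatorname{vol}_{\Z A}(\Delta_{\sigma})\ =\ \operatorname{vol}_{\Z A}(\Delta_{\tau}),
\]
which together with Theorem~\ref{GevreyII} yields the claimed equality.

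The main obstacle is the second paragraph: making the leading-exponent/indicial argument precise when $l=\#\tau>d$, where a formal solution need not be a sum of $\Gamma$-series with monomial leading terms, so that one must work inside each $V_{\tau}$-graded piece — where $f_{\alpha}$ solves the sub-hypergeometric system $\M_{A_{\tau}}(\beta-A_{\overline{\tau}}\alpha)$ — and then propagate along the mixed toric operators. The hypotheses enter precisely here: genericity of $p$ gives the finiteness and the clean stalk description, while very genericity of $\beta$ both guarantees that the series $\phi_{\sigma}^{\mathbf{k}}$ are nonzero with the expected leading exponents and disjoint supports, and rules out the accidental integrality of the $\sigma$-coordinates that would create extra adapted fake exponents (equivalently, it excludes rank jumps and the phenomenon of Example~\ref{contraejemplo}).
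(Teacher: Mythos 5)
Your proposal is structured correctly at the outer level (lower bound from Theorem~\ref{GevreyII}, the rank-deficient case, then the upper bound), and your treatment of $\operatorname{rank}(A_{\tau})<d$ is essentially the paper's Corollary~\ref{coro-zero-solution}: the operator $\gamma(A\Theta-\beta)$ with $\gamma A_{\tau}=0$ has no $x_{\tau},\partial_{\tau}$-content and acts on the $\alpha$-th coefficient by the nonzero scalar $\gamma A_{\overline{\tau}}\alpha-\gamma\beta$, forcing $f=0$. However, the core of your proof — the second paragraph — is a genuinely different strategy from the paper's, and it has a real gap that you yourself flag.

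The problem is the claim that a formal $Y_{\tau}$-solution at a fixed generic $p$ has a well-defined leading exponent $v^{0}(f)\in\C^{n}$ killed by the indicial ideal $\inwwt$. A germ $f\in\cO_{\widehat{X|Y_{\tau}},p}$ is a power series in $x_{\overline{\tau}}$ whose coefficients $f_{\alpha}\in\cO_{Y_{\tau},p}$ are ordinary holomorphic germs, hence honest Taylor series in $(x_{\tau}-p_{\tau})$ with exponents in $\N^{l}$. A fake exponent $v$ with generic $\C$-valued $\sigma$-coordinates is not an exponent in this Taylor sense; the leading-exponent/indicial machinery applies to series already written in the ``torus'' form $\sum c_{u}x^{v+u}$, $u\in L_{A}$. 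In other words, your argument implicitly assumes what has to be proved: that $f$ decomposes into a sum of $\Gamma$-series attached to simplices of $\mathrm{T}(\tau)$. When $l>d$ the coefficients $f_{\alpha}$ are themselves nontrivial multi-variable series, so there is no direct passage from $f$ to a canonical monomial.

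The paper closes exactly this hole by a reduction that is cleaner than the indicial route. Lemma~\ref{lema-formal-solutions} observes that $I_{A}\cap\C[\partial_{\tau}]=I_{A_{\tau}}$ and $A\Theta-\beta$ restricted to a fixed $x_{\overline{\tau}}^{\alpha}$-layer gives $A_{\tau}\Theta_{\tau}-(\beta-A_{\overline{\tau}}\alpha)$, so each $f_{\alpha}$ is an honest holomorphic solution of $\M_{A_{\tau}}(\beta-A_{\overline{\tau}}\alpha)$ — a system where the classical rank theory (Lemma~\ref{lema-holomorphic}, citing \cite{GKZ}, \cite{Adolphson}) does apply. For very generic $\beta$ and generic $p$ those $l$-variable solutions are exactly the span of the $\Gamma$-series $\phi_{\sigma}^{\mathbf{k}}$ for $A_{\tau}$, so Lemma~\ref{lema-formal} writes $f$ in the form $\sum_{\sigma\in\mathrm{T}(\tau)}\sum_{\mathbf{m}}c_{\sigma,\mathbf{m}}x^{v_{\sigma,\mathbf{m}}}$. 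Only now can one split: since $\beta$ is very generic the exponents attached to distinct $(\sigma,\Lambda_{\mathbf{k}(i)})$ pairs differ by non-integers, so a toric $\Box_{u}$ cannot couple them; each piece must separately be annihilated by $H_{A}(\beta)$ and hence be a scalar multiple of a single $\phi_{\sigma}^{\mathbf{k}(i)}$. The count $\sum_{\sigma\in\mathrm{T}(\tau)}\operatorname{vol}_{\Z A}(\Delta_{\sigma})=\operatorname{vol}_{\Z A}(\Delta_{\tau})$ then coincides with your third-paragraph count of adapted fake exponents. So your fake-exponent enumeration is the right final bookkeeping, but it needs the paper's structural Lemmas~\ref{lema-formal-solutions}--\ref{lema-formal} in front of it; as written, the ``well-order on the Newton set'' step does not stand on its own.
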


\begin{nota}
Theorem \ref{theorem-dimension-formal} also implies that equality
holds in (\ref{lower-bound-s}) for very generic parameters $\beta
\in \C^d$ because the $\operatorname{vol}_{\Z A} (\Delta_{\tau})$
Gevrey series $\phi_{\sigma}^{\mathbf{k}}$ with $\sigma \in
\mathrm{T} (\tau )$ have pairwise disjoint supports and their index
along $Y_{\tau}$ is $\max \{|A_{\sigma}^{-1} a_j |:\; j\notin \tau
\}$.
\end{nota}

\begin{cor}
If $\beta \in \C^d$ is very generic then
\begin{equation}\dim_{\C } \mathcal{H}^0(\operatorname{Irr}_{Y_{\tau}}^{(s)} (\M_{A}(\beta )
)_p \geq \sum_{\sigma \in \mathrm{T}(\tau , s)
\setminus\mathrm{T}(\tau , 1)} \operatorname{vol}_{\Z
A}(\Delta_{\sigma} ) \label{lower-bound-irreg-s}
\end{equation} for generic $p\in Y_{\tau}$.
\end{cor}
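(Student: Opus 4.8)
The plan is to read the bound off directly from Theorem \ref{GevreyII} once one has identified which of the series $\phi_{\sigma}^{\mathbf{k}}$ are genuinely non-convergent Gevrey series of order $s$. First I would unwind the definition: since $\operatorname{Irr}_{Y_{\tau}}^{(s)}(\M_A(\beta))=\R\mathcal{H}om_{\D}(\M_A(\beta),\cQ_{Y_{\tau}}(s))$ with $\cQ_{Y_{\tau}}(s)=\cO_{X|Y_{\tau}}(s)/\cO_{X|Y_{\tau}}$, taking $\mathcal{H}^0$ gives
$$\mathcal{H}^0(\operatorname{Irr}_{Y_{\tau}}^{(s)}(\M_A(\beta)))_p=\mathcal{H}om_{\D}(\M_A(\beta),\cO_{X|Y_{\tau}}(s)/\cO_{X|Y_{\tau}})_p,$$
so it suffices to exhibit $\sum_{\sigma\in\mathrm{T}(\tau,s)\setminus\mathrm{T}(\tau,1)}\operatorname{vol}_{\Z A}(\Delta_{\sigma})$ linearly independent solutions of $\M_A(\beta)$ in this quotient, for $p$ in a nonempty relatively open subset of $Y_{\tau}$.

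Next I would record the translation $a_j/s\notin H_{\sigma}^{+}\Leftrightarrow|A_{\sigma}^{-1}a_j|\leq s$, so that a $(d-1)$-simplex $\sigma\subseteq\tau$ of $\mathrm{T}(\tau)$ lies in $\mathrm{T}(\tau,s)$ exactly when $t_{\sigma}:=\max\{|A_{\sigma}^{-1}a_j|:\,j\notin\tau\}\leq s$, and hence $\sigma\in\mathrm{T}(\tau,s)\setminus\mathrm{T}(\tau,1)$ exactly when $1<t_{\sigma}\leq s$. By Theorem \ref{GevreyII} and the remark following Theorem \ref{theorem-dimension-formal}, for very generic $\beta$ the $\operatorname{vol}_{\Z A}(\Delta_{\tau})$ series $\phi_{\sigma}^{\mathbf{k}}$ ($\sigma\in\mathrm{T}(\tau)$) are nonzero, solve $\M_A(\beta)$ along $Y_{\tau}$, converge on the nonempty relatively open set $W_{\mathrm{T}(\tau,s)}=Y_{\tau}\cap\bigcap_{\sigma\in\mathrm{T}(\tau,s)}U_{\sigma}$, have pairwise disjoint supports, and have Gevrey index along $Y_{\tau}$ equal to $t_{\sigma}$. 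Therefore, for $\sigma\in\mathrm{T}(\tau,s)\setminus\mathrm{T}(\tau,1)$ each $\phi_{\sigma}^{\mathbf{k}}$ lies in $\cO_{X|Y_{\tau}}(s)_p$ but not in $\cO_{X|Y_{\tau}}(1)_p=\cO_{X|Y_{\tau},p}$, so its class in $\cQ_{Y_{\tau}}(s)_p$ is a nonzero solution of $\M_A(\beta)$; letting $\sigma$ range over $\mathrm{T}(\tau,s)\setminus\mathrm{T}(\tau,1)$ and $\mathbf{k}$ over the usual set of $\operatorname{vol}_{\Z A}(\Delta_{\sigma})$ representatives produces $\sum_{\sigma\in\mathrm{T}(\tau,s)\setminus\mathrm{T}(\tau,1)}\operatorname{vol}_{\Z A}(\Delta_{\sigma})$ such classes.

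Finally I would check that these classes are linearly independent in $\cQ_{Y_{\tau}}(s)_p$. The idea is to use disjointness of supports: if a nontrivial combination $g=\sum c_{\sigma,\mathbf{k}}\phi_{\sigma}^{\mathbf{k}}$ belonged to $\cO_{X|Y_{\tau},p}$, then restricting $g$ to the support of a summand $\phi_{\sigma_0}^{\mathbf{k}_0}$ with $c_{\sigma_0,\mathbf{k}_0}\neq0$ would exhibit $c_{\sigma_0,\mathbf{k}_0}\phi_{\sigma_0}^{\mathbf{k}_0}$ as a sub-series of a convergent series, hence convergent, contradicting that its Gevrey index $t_{\sigma_0}$ is strictly larger than $1$; thus all coefficients vanish. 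I expect this linear-independence step, together with the bookkeeping of which simplices contribute convergent series versus strictly Gevrey ones, to be the only point that needs real care; everything else is a direct consequence of Theorem \ref{GevreyII}. The asserted inequality then holds for all $p$ in the nonempty relatively open set $W_{\mathrm{T}(\tau,s)}\subseteq Y_{\tau}$, i.e. for generic $p\in Y_{\tau}$.
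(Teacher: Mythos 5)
Your proof is correct and follows essentially the same route as the paper's implicit one: both rest on the explicit series $\phi_{\sigma}^{\mathbf{k}}$ produced in Theorem \ref{GevreyII}, the identification of their Gevrey indices with $t_{\sigma}=\max\{|A_{\sigma}^{-1}a_j|:j\notin\tau\}$ for very generic $\beta$, and the fact that for very generic $\beta$ they have pairwise disjoint supports. The paper deduces the bound by subtraction, using the remark that equality holds in (\ref{lower-bound-s}) (in particular for $s=1$) together with the left-exactness of $\mathcal{H}om_{\D}(\M_A(\beta),-)$ applied to $0\to\cO_{X|Y_{\tau}}\to\cO_{X|Y_{\tau}}(s)\to\cQ_{Y_{\tau}}(s)\to 0$; you instead directly exhibit the $\sum_{\sigma\in\mathrm{T}(\tau,s)\setminus\mathrm{T}(\tau,1)}\operatorname{vol}_{\Z A}(\Delta_{\sigma})$ classes $[\phi_{\sigma}^{\mathbf{k}}]$ in $\cQ_{Y_{\tau}}(s)_p$ and check their independence. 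These amount to the same underlying fact — that the basis $\{\phi_{\sigma}^{\mathbf{k}}\}$ of the formal solution space is adapted to the Gevrey filtration. One small point to tighten: the step ``a nontrivial linear combination in $\cO_{X|Y_{\tau},p}$ would exhibit a sub-series of a convergent series, hence convergent'' is not literally the one-variable subseries principle, since the coefficients $g_{\alpha}(x_{\tau})$ are themselves finite linear combinations of monomials $x_{\tau}^{w}$ with $\alpha$-dependent exponents, and convergence of a sum of such terms does not by itself imply convergence of each piece. What makes it work is precisely the very-genericity of $\beta$ and the affine-linear dependence of the $\tau$-exponents $w_{\sigma,\mathbf{k},\alpha}$ on $\alpha$, which separates the contributions of distinct $(\sigma,\mathbf{k})$ uniformly in $\alpha$; this is the same point the paper is gesturing at with ``pairwise disjoint supports,'' so you are in good company, but it deserves a word of justification rather than an appeal to the subseries principle.
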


In Section \ref{basis-Gevrey} we proved the lower bound
(\ref{lower-bound}) by explicitly constructing
$\operatorname{vol}_{\Z A} (\Delta_{\tau})$ Gevrey series solutions
of $\M_A (\beta )$ along $Y_{\tau}$ in certain relatively open
subsets of $Y_{\tau}$. Now we are going to prove that equality holds
if $\beta$ is very generic.

\begin{lema}
If $\operatorname{rank}(A_{\tau })=d$ then $$\operatorname{vol}_{\Z
A}(\Delta_{\tau})= \operatorname{vol}_{\Z \tau }(\Delta_{\tau}) [\Z
A : \Z \tau ].$$
\end{lema}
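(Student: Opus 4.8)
The statement to prove is purely lattice-theoretic: it expresses the normalized volume of $\Delta_\tau$ with respect to the large lattice $\Z A$ as the product of the normalized volume with respect to the smaller lattice $\Z\tau := \Z A_\tau = \sum_{i\in\tau}\Z a_i$ and the index $[\Z A : \Z\tau]$. The plan is to reduce everything to a single ambient sublattice and chase the multiplicativity of lattice indices. First I would fix notation: since $\operatorname{rank}(A_\tau)=d$, the $\R$-span of $\{a_i : i\in\tau\}$ is all of $\R^d$, so $\Z\tau$ is a full-rank sublattice of $\Z^d$, hence also of $\Z A$. All three lattices $\Z\tau \subseteq \Z A \subseteq \Z^d$ are full rank in $\R^d$, so all the normalized volumes in sight are finite and positive (when $\Delta_\tau$ is a genuine $d$-simplex; if $\dim\Delta_\tau<d$ everything is zero and there is nothing to prove, but the rank hypothesis rules this out).

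The key step is to recall, from the definition already used in the excerpt (Section~\ref{construction-Gevrey}), that for any full-rank lattice $\Lambda$ with $\Z\tau\subseteq\Lambda\subseteq\Z^d$ one has
$$\operatorname{vol}_\Lambda(\Delta_\tau) = \frac{d!\,\operatorname{vol}(\Delta_\tau)}{[\Z^d:\Lambda]},$$
where $\operatorname{vol}$ is Euclidean volume. Applying this with $\Lambda = \Z A$ and with $\Lambda = \Z\tau$ gives
$$\frac{\operatorname{vol}_{\Z A}(\Delta_\tau)}{\operatorname{vol}_{\Z\tau}(\Delta_\tau)} = \frac{[\Z^d : \Z\tau]}{[\Z^d : \Z A]}.$$
Then I would invoke multiplicativity of the index in the tower $\Z\tau \subseteq \Z A \subseteq \Z^d$, namely $[\Z^d:\Z\tau] = [\Z^d:\Z A]\cdot[\Z A:\Z\tau]$, which is the standard fact that for finite-index subgroups $N\subseteq M\subseteq G$ of a free abelian group one has $[G:N]=[G:M][M:N]$. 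Substituting gives exactly $\operatorname{vol}_{\Z A}(\Delta_\tau) = \operatorname{vol}_{\Z\tau}(\Delta_\tau)\,[\Z A:\Z\tau]$, which is the claim.

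I do not anticipate a genuine obstacle here; this is a bookkeeping lemma. The only point requiring a word of care is ensuring that the formula $\operatorname{vol}_\Lambda(\Delta_\tau)=d!\operatorname{vol}(\Delta_\tau)/[\Z^d:\Lambda]$ is legitimate for $\Lambda=\Z\tau$, i.e.\ that $\Z\tau$ really is full rank so that $[\Z^d:\Z\tau]$ is finite — this is precisely where the hypothesis $\operatorname{rank}(A_\tau)=d$ enters, and it should be stated explicitly. Alternatively, and perhaps more cleanly, one can avoid $\Z^d$ altogether: working inside the ambient lattice $\Z A$, the normalized volume $\operatorname{vol}_{\Z A}(\Delta_\tau)$ counts (up to the usual $d!$ normalization) fundamental domains of $\Z A$ inside $\Delta_\tau$, while $\operatorname{vol}_{\Z\tau}(\Delta_\tau)$ counts fundamental domains of $\Z\tau$; since each fundamental domain of $\Z\tau$ is tiled by exactly $[\Z A:\Z\tau]$ fundamental domains of $\Z A$, the factor $[\Z A:\Z\tau]$ appears directly. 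Either route is short; I would present the index-tower computation as the main argument since it makes the role of each hypothesis transparent.
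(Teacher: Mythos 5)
Your argument is correct and is essentially identical to the paper's proof: both express $\operatorname{vol}_{\Z A}(\Delta_\tau)$ and $\operatorname{vol}_{\Z\tau}(\Delta_\tau)$ via $d!\operatorname{vol}(\Delta_\tau)/[\Z^d:\Lambda]$ and then invoke multiplicativity of the index in the tower $\Z\tau\subseteq\Z A\subseteq\Z^d$. The paper presents it a bit more tersely, but there is no substantive difference.
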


\begin{proof}
We have that $\operatorname{vol}_{\Z A}(\Delta_{\tau})=\frac{d!
\operatorname{vol}(\Delta_{\tau})}{[\Z^d :\Z A]}$ and
$\operatorname{vol}_{\Z \tau}(\Delta_{\tau})=\frac{d!
\operatorname{vol}(\Delta_{\tau})}{[\Z^d :\Z \tau]}$. Since $\Z \tau
\subseteq \Z A\subseteq \Z^d $ then $[\Z^d :\Z \tau ]= [\Z^d :\Z A] [\Z A : \Z \tau ]$
and the result is obtained.
\end{proof}

\begin{lema}\label{lema-formal-solutions}
If $f=\sum_{m\in \N^{n-l}} f_m (x_{\tau}) x_{\overline{\tau}}^{m}
\in \cO_{\widehat{X|Y_{\tau}},p}$ is a formal solution of $\M_A
(\beta )$, then $f_m (x_{\tau}) \in \cO_{Y_{\tau},p}$ is a
holomorphic solution of $\M_{A_{\tau}}(\beta -A_{\overline{\tau}}m)$
for all $m\in \N^{n-l}$.
\end{lema}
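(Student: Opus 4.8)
The plan is to use the $x_{\overline{\tau}}$-adic decomposition
$$\cO_{\widehat{X|Y_{\tau}},p}=\C\{x_{\tau}-p_{\tau}\}[[x_{\overline{\tau}}]]=\prod_{m\in\N^{n-l}}\C\{x_{\tau}-p_{\tau}\}\,x_{\overline{\tau}}^{m},$$
under which extracting the coefficient of a fixed monomial $x_{\overline{\tau}}^{m}$ is a well-defined $\C$-linear map. It therefore suffices to see how each generator of $H_A(\beta)$ acts on a single homogeneous piece $f_{m}(x_{\tau})\,x_{\overline{\tau}}^{m}$ and to read off that the vanishing of $P(f)$ forces the vanishing of the corresponding operator of $H_{A_{\tau}}(\beta-A_{\overline{\tau}}m)$ applied to $f_{m}$. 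Note that $f_{m}\in\cO_{Y_{\tau},p}$ already holds by the very definition of a germ of $\cO_{\widehat{X|Y_{\tau}}}$.

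First I would treat the Euler operators. Writing $E_{i}^{\tau}:=\sum_{j\in\tau}a_{ij}x_{j}\partial_{j}$ for the $i$-th Euler operator of $A_{\tau}$, a direct computation using that $x_{j}\partial_{j}$ acts on $x_{\overline{\tau}}^{m}$ as multiplication by $m_{j}$ when $j\notin\tau$ and differentiates only $f_{m}$ when $j\in\tau$ gives
$$(E_{i}-\beta_{i})(f)=\sum_{m\in\N^{n-l}}\Bigl(\bigl(E_{i}^{\tau}-(\beta_{i}-(A_{\overline{\tau}}m)_{i})\bigr)f_{m}(x_{\tau})\Bigr)x_{\overline{\tau}}^{m}.$$
Hence $(E_{i}-\beta_{i})(f)=0$ yields $\bigl(E_{i}^{\tau}-(\beta_{i}-(A_{\overline{\tau}}m)_{i})\bigr)(f_{m})=0$ for every $m$ and every $i=1,\dots,d$, i.e.\ $f_{m}$ is annihilated by all Euler operators of the system with parameter $\beta-A_{\overline{\tau}}m$.

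For the toric operators, the key point is that every $u'\in\ker A_{\tau}\cap\Z^{l}$ extends, by zeros on the $\overline{\tau}$-coordinates, to a vector $u=(u',\mathbf{0})\in L_{A}$ whose support is contained in $\tau$; consequently $\Box_{u}=\partial_{\tau}^{u'_{+}}-\partial_{\tau}^{u'_{-}}$ involves only the variables $x_{\tau}$, so it acts diagonally on the decomposition above and $\Box_{u}(f)=\sum_{m}(\Box_{u'}f_{m})(x_{\tau})\,x_{\overline{\tau}}^{m}$, where on the right $\Box_{u'}$ denotes the same toric operator regarded as acting on functions of $x_{\tau}$. Since $\Box_{u}\in H_A(\beta)$ annihilates $f$, comparing coefficients gives $\Box_{u'}(f_{m})=0$ for all $m$. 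As $u'$ runs over $\ker A_{\tau}\cap\Z^{l}$ these $\Box_{u'}$ generate the toric ideal $I_{A_{\tau}}$, so $f_{m}$ is annihilated by $I_{A_{\tau}}$; combined with the previous paragraph, $f_{m}$ is annihilated by $H_{A_{\tau}}(\beta-A_{\overline{\tau}}m)$ and is thus a holomorphic solution of $\M_{A_{\tau}}(\beta-A_{\overline{\tau}}m)$ at $p$.

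There is no serious obstacle here; the argument is essentially a bookkeeping exercise. The only steps meriting a line of care are the identification of $\Box_{u}$ with $\Box_{u'}$ as an operator on functions of $x_{\tau}$ for the zero-extension $u=(u',\mathbf{0})$ (which is precisely what realizes every generator of $I_{A_{\tau}}$ as an element of $H_A(\beta)$, so that no operator $\Box_{u}$ with $\operatorname{supp}(u)\not\subseteq\tau$ is needed), and the legitimacy of coefficient extraction in $x_{\overline{\tau}}$ on the ring $\C\{x_{\tau}-p_{\tau}\}[[x_{\overline{\tau}}]]$ — both routine.
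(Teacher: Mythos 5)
Your proof is correct and follows essentially the same route as the paper's: the Euler computation is identical, and your observation that the toric generators $\Box_{u'}$ of $I_{A_\tau}$ (for $u'\in\ker A_\tau\cap\Z^l$) lift to operators $\Box_{(u',\mathbf 0)}\in H_A(\beta)$ acting only on $x_\tau$ is exactly the content of the paper's appeal to $I_A\cap\C[\partial_\tau]=I_{A_\tau}$.
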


\begin{proof}
It is clear that $I_A \cap \C [\partial_{\tau}]=I_{A_{\tau}}$. Then
for any differential operators $P\in I_{A_{\tau}}\subseteq \C
[\partial_{\tau}]$ we have that $$0=P(f)=\sum_{m\in \N^{n-l}} P(f_m
(x_{\tau})) x_{\overline{\tau}}^{m}$$ and this implies that $ P(f_m
(x_{\tau}))=0$ for all $m\in \N^{n-l}$.

\vspace{.5cm}

Let $\Theta$ denote the vector with coordinates $\Theta_i =x_i
\partial _i$ for $i=1,\ldots ,n$. Then  $A\Theta -\beta =A_{\tau}\Theta_{\tau} +
A_{\overline{\tau}}\Theta_{\overline{\tau}}-\beta$ and
$$\mathbf{0}=(A\Theta -\beta )(f)=\sum_{m\in \N^{n-l}}(A_{\tau}\Theta_{\tau} +
A_{\overline{\tau}}m -\beta )( f_m (x_{\tau}
))x_{\overline{\tau}}^{m}$$ so $f_m (x_{\tau} )$ must be annihilated
by the Euler operators $A_{\tau} \Theta_{\tau} -(\beta
-A_{\overline{\tau}} m)$.
\end{proof}

\begin{cor}\label{coro-zero-solution}
If $\operatorname{rank}(A_{\tau })<d$ and $\beta \in \C^d$ is very
generic then $$dim_{\C}\mathcal{H}om (\M_A (\beta
),\cO_{\widehat{X|Y_{\tau}}})= 0 .$$
\end{cor}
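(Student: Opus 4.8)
The plan is to reduce to Lemma \ref{lema-formal-solutions} and then exploit the rank deficiency of $A_{\tau}$ through the Euler operators. Let $f=\sum_{m\in \N^{n-l}} f_m(x_{\tau})\, x_{\overline{\tau}}^{m}\in \cO_{\widehat{X|Y_{\tau}},p}$ be a formal solution of $\M_A(\beta)$ near a point $p\in Y_{\tau}$. By Lemma \ref{lema-formal-solutions}, each coefficient $f_m(x_{\tau})$ is a holomorphic solution of $\M_{A_{\tau}}(\beta-A_{\overline{\tau}}m)$; in particular it is annihilated by the Euler operators of that system. I will show that when $\beta$ is very generic no such system admits a nonzero solution, whence $f_m=0$ for all $m$ and therefore $f=0$, which gives the statement.

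First I would produce a nonzero Euler combination that is a scalar. Since $A_{\tau}$ is a $d\times l$ matrix of rank strictly less than $d$, its left kernel is nonzero, so we may fix a nonzero row vector $\lambda\in \C^d$ with $\lambda A_{\tau}=\mathbf{0}$. For any $\gamma\in \C^d$ the Euler operators defining $\M_{A_{\tau}}(\gamma)$ are $E^{\tau}_i-\gamma_i=\sum_{j\in\tau} a_{ij}x_j\partial_j-\gamma_i$ for $i=1,\ldots,d$, and their $\lambda$-combination is
$$\sum_{i=1}^{d}\lambda_i\bigl(E^{\tau}_i-\gamma_i\bigr)=\sum_{j\in\tau}(\lambda A_{\tau})_j\, x_j\partial_j-\lambda\gamma=-\lambda\gamma,$$
i.e. multiplication by the constant $-\lambda\gamma$. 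Hence a nonzero function annihilated by all the Euler operators of $\M_{A_{\tau}}(\gamma)$ can exist only if $\lambda\gamma=0$. Applying this with $\gamma=\beta-A_{\overline{\tau}}m$: if $f_m\neq 0$ then necessarily $\lambda\beta=\lambda A_{\overline{\tau}}m$.

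Finally I would invoke the genericity hypothesis. The set $\{\,\lambda A_{\overline{\tau}}m : m\in \N^{n-l}\,\}\subseteq \C$ is countable, and for each fixed $m$ the condition $\lambda\beta\neq \lambda A_{\overline{\tau}}m$ cuts out a nonempty Zariski open subset of $\C^d$, since $\lambda\neq \mathbf{0}$ makes $\beta\mapsto\lambda\beta$ a nonzero linear functional. Intersecting over all $m\in\N^{n-l}$ we obtain a countable intersection of Zariski open sets, that is, a condition satisfied by every very generic $\beta$. For such $\beta$ we get $\lambda\beta\neq \lambda A_{\overline{\tau}}m$ for all $m$, so $f_m=0$ for all $m$ and $f=0$. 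Thus $\mathcal{H}om(\M_A(\beta),\cO_{\widehat{X|Y_{\tau}}})=0$ for very generic $\beta$, as claimed. I do not expect a genuine obstacle here; the only point to keep in mind is that the excluded parameters form a countable union of hyperplanes rather than a single proper Zariski closed set, which is precisely why the statement is formulated for very generic (and not merely generic) $\beta$.
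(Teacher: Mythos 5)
Your proof is correct and takes essentially the same route as the paper: both reduce via Lemma \ref{lema-formal-solutions} to the coefficient systems $\M_{A_{\tau}}(\beta-A_{\overline{\tau}}m)$ and then use a nonzero vector in the left kernel of $A_{\tau}$ to produce a nonzero scalar inside the ideal for very generic $\beta$, forcing each $f_m=0$. Your explicit explanation of why one needs very generic rather than merely generic $\beta$ (a countable intersection of Zariski opens, one for each $m\in\N^{n-l}$) is a useful elaboration of a point the paper leaves implicit.
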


\begin{proof}
If $\operatorname{rank}(A_{\tau })<d$, then there exists a nonzero
vector $\gamma \in \Q^d$ such that the vector $\gamma A_{\tau}$ is
zero. If $\beta$ is very generic $(\gamma A_\tau \Theta_{\tau}
-\gamma (\beta - A_{\overline{\tau }}m) = -\gamma (\beta -
A_{\overline{\tau }}m)\neq 0$ is a nonzero constant that is a linear
combination of the Euler operators in the definition of
$\M_{A_{\tau}}(\beta - A_{\overline{\tau }}m)$ and so
$\M_{A_{\tau}}(\beta - A_{\overline{\tau }}m)=0$. By Lemma
\ref{lema-formal-solutions}, the coefficients in $\cO_{Y_{\tau} ,p}$
of any formal solution $f$ of $\M_A (\beta )$ in
$\cO_{\widehat{X|Y_{\tau}},p}$ must be solutions of
$\M_{A_{\tau}}(\beta - A_{\overline{\tau }}m)=0$. This implies that
the coefficients of $f$ are zero and so $f=0$.
\end{proof}

\begin{nota}
By Corollary \ref{coro-zero-solution} we have the equality in
Theorem \ref{theorem-dimension-formal} holds when
$\operatorname{rank}(A_{\tau })<d$. For the remainder of this
section we shall assume that $\operatorname{rank}(A_{\tau })=d$ and
then $l\geq d$.
\end{nota}

The following Lemma is a direct consequence of results from
\cite{Adolphson} and \cite{GKZ}.

\begin{lema}\label{lema-holomorphic}
If $\beta$ is very generic and $p\in Y_{\tau }$, then for all $m\in
\N^{n-l}$:
$$\dim_{\C}\mathcal{H}om (\M_{A_{\tau }} (\beta -A_{\overline{\tau}} m ),
\cO_{Y_{\tau}})_p \leq \operatorname{vol}_{\Z \tau}
(\Delta_{\tau}).$$ Equality holds if $p$ does not lie in the
singular locus of $\M_{A_{\tau }} (\beta )$ (which does not depend
on $\beta$).
\end{lema}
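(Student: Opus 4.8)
The plan is to reduce the statement to Adolphson's computation of the holonomic rank of a GKZ system with generic parameters, together with a standard monodromy argument bounding the dimension of solution germs at an arbitrary point.

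First I would fix $m\in\N^{n-l}$ and set $\gamma:=\beta-A_{\overline{\tau}}m$, regarding $\M_{A_{\tau}}(\gamma)$ as a $\D$-module on $Y_{\tau}\cong\C^{\tau}$. Since $\beta$ is very generic and $\N^{n-l}$ is countable, $\gamma$ avoids, for every $m$, the exceptional hyperplane arrangement outside of which Adolphson's theorem applies; hence $\gamma$ is a generic parameter for $A_{\tau}$, and by \cite{Adolphson} (resp. \cite{GKZ} when $I_{A_{\tau}}$ is Cohen--Macaulay) the holonomic rank of $\M_{A_{\tau}}(\gamma)$ equals $\operatorname{vol}_{\Z\tau}(\Delta_{\tau})$.

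For the upper bound at an arbitrary $p\in Y_{\tau}$ I would take a small open ball $U$ around $p$ and let $S\subsetneq Y_{\tau}$ be the singular locus of $\M_{A_{\tau}}(\gamma)$, so that $U\setminus S$ is connected. A germ at $p$ of a holomorphic solution restricts to a holomorphic solution on $U\setminus S$, and this restriction map is injective because $U\setminus S$ is dense in $U$. By Kashiwara's constructibility theorem (see \cite{Bjork}), $\mathcal{H}om_{\D}(\M_{A_{\tau}}(\gamma),\cO)$ is a local system of rank $r:=\operatorname{vol}_{\Z\tau}(\Delta_{\tau})$ on $Y_{\tau}\setminus S$, so its sections over the connected set $U\setminus S$ --- the monodromy invariants --- form a space of dimension at most $r$. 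This gives $\dim_{\C}\mathcal{H}om(\M_{A_{\tau}}(\gamma),\cO_{Y_{\tau}})_p\leq\operatorname{vol}_{\Z\tau}(\Delta_{\tau})$, which is the asserted inequality for $\gamma=\beta-A_{\overline{\tau}}m$.

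For the equality statement I would argue that the singular locus of $\M_{A_{\tau}}(\beta)$ does not depend on $\beta$: the characteristic variety is cut out by the principal symbols of the toric operators $\Box_u$ and of the Euler operators $E_i-\beta_i$, and the latter symbols $\sum_j a_{ij}x_j\xi_j$ are independent of $\beta$; hence so is the singular locus, which therefore equals the set $S$ above. If $p$ lies outside the singular locus of $\M_{A_{\tau}}(\beta)$ it lies outside $S$, where the solution sheaf is locally constant of rank $r$, so its stalk at $p$ has dimension exactly $\operatorname{vol}_{\Z\tau}(\Delta_{\tau})$. The only genuinely delicate point is the first one --- verifying that very genericity of $\beta$ yields, simultaneously for all the shifted parameters $\beta-A_{\overline{\tau}}m$, the genericity hypothesis of \cite{Adolphson}; the remaining ingredients (that the rank equals the solution dimension away from the singular locus, and that the complement of a proper analytic subvariety in a ball is connected) are classical.
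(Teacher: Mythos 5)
Your proposal is correct and follows the same route the paper gestures at: the paper simply declares the lemma ``a direct consequence of results from [Adolphson] and [GKZ],'' and you have supplied exactly the standard scaffolding that statement presupposes --- very genericity of $\beta$ rules out resonance for all the shifted parameters $\beta-A_{\overline{\tau}}m$ simultaneously, Adolphson then gives the holonomic rank $\operatorname{vol}_{\Z\tau}(\Delta_\tau)$, and the analytic-continuation/monodromy-invariant argument transfers the bound to singular points. The only cosmetic point worth tightening is the claim that the characteristic variety is ``cut out by the principal symbols of the generators'': in general this only gives an inclusion, so one should instead invoke the standard description of the singular locus of a GKZ system (or Adolphson's computation of the characteristic variety for nonresonant parameters, which is visibly $\beta$-independent), but this does not affect the validity of the argument.
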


Let us consider $\mathrm{T} (\tau)$ a regular triangulation of
$A_{\tau}$ verifying (\ref{condition-triangulation-2}).

\begin{lema}\label{lema-formal}
Any formal solution $f=\sum_{m\in \N^{n-l}} f_m (x_{\tau})
x_{\overline{\tau}}^{m} \in \cO_{\widehat{X|Y_{\tau}},p}$ of $\M_A
(\beta )$, $p\in W_{\mathrm{T}(\tau )}\subseteq Y_{\tau}$, can be
written as follows:

$$f=\sum_{\sigma \in \mathrm{T}(\tau )}\sum_{\mathbf{m}\in \N^{n-d}} c_{\sigma ,
\mathbf{m}} x_{\sigma}^{A_{\sigma}^{-1}(\beta
-A_{\overline{\sigma}}\mathbf{m})}x_{\overline{\sigma}}^{\mathbf{m}}.$$

\end{lema}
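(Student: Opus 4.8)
The plan is to combine Lemma \ref{lema-formal-solutions} with the classical description of holomorphic solutions of a hypergeometric system associated to a simplex via $\Gamma$-series, together with the regular triangulation $\mathrm{T}(\tau)$ of $A_{\tau}$. First I would apply Lemma \ref{lema-formal-solutions}: writing $f=\sum_{m\in\N^{n-l}} f_m(x_{\tau})x_{\overline{\tau}}^{m}$, each coefficient $f_m(x_{\tau})\in\cO_{Y_{\tau},p}$ is a holomorphic solution of $\M_{A_{\tau}}(\beta-A_{\overline{\tau}}m)$ at $p$. Since $p\in W_{\mathrm{T}(\tau)}=Y_{\tau}\cap\bigcap_{\sigma\in\mathrm{T}(\tau)}U_{\sigma}$ is, for generic $p$, outside the singular locus of $\M_{A_{\tau}}(\beta)$ (independent of $\beta$), Lemma \ref{lema-holomorphic} gives that the space of such holomorphic solutions has dimension exactly $\operatorname{vol}_{\Z\tau}(\Delta_{\tau})=\sum_{\sigma\in\mathrm{T}(\tau),\dim\sigma=d-1}\operatorname{vol}_{\Z\tau}(\Delta_{\sigma})$.

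Next I would invoke the construction of Section \ref{construction-Gevrey} applied to the matrix $A_{\tau}$ in place of $A$: for each $(d-1)$-simplex $\sigma\in\mathrm{T}(\tau)$ and each class $\mathbf{k}$ modulo $\Z A_{\tau}/\Z\sigma$, the $\Gamma$-series $\phi_{\sigma}^{\mathbf{k}}$ (built from $v^{\mathbf{k}}$ with $\sigma$-coordinates $A_{\sigma}^{-1}(\beta'-A_{\overline{\sigma}}\mathbf{k})$ for the relevant parameter $\beta'=\beta-A_{\overline{\tau}}m$) is a holomorphic solution of $\M_{A_{\tau}}(\beta')$ convergent on $U_{\sigma}$, and these are linearly independent with pairwise disjoint supports. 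Because $\mathrm{T}(\tau)$ satisfies (\ref{condition-triangulation-2}) and $p$ lies in every $U_{\sigma}$, the total count of these series over all $\sigma\in\mathrm{T}(\tau)$ is $\operatorname{vol}_{\Z\tau}(\Delta_{\tau})$, matching the dimension from Lemma \ref{lema-holomorphic}; hence they form a basis of the solution space of $\M_{A_{\tau}}(\beta')$ at $p$. (For the very generic $\beta$ at hand, all shifted parameters $\beta-A_{\overline{\tau}}m$ remain generic enough that this applies for every $m$.) Therefore each $f_m(x_{\tau})$ is a $\C$-linear combination of monomials $x_{\sigma}^{A_{\sigma}^{-1}(\beta-A_{\overline{\tau}}m-A_{\overline{\sigma}}\mathbf{m}')}$ with $\sigma\in\mathrm{T}(\tau)$ and $\mathbf{m}'\in\N^{n-d}$ indexing the support of the corresponding $\Gamma$-series.

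Finally I would reassemble $f=\sum_m f_m(x_{\tau})x_{\overline{\tau}}^{m}$ and absorb the variables $x_{\overline{\tau}}^{m}$ into the exponent data: since $\tau=\sigma\sqcup(\tau\setminus\sigma)$ and $\overline{\tau}\subseteq\overline{\sigma}$, a monomial $x_{\sigma}^{A_{\sigma}^{-1}(\beta-A_{\overline{\tau}}m-A_{\overline{\sigma}}\mathbf{m}')}x_{\overline{\tau}}^{m}$ is precisely of the form $x_{\sigma}^{A_{\sigma}^{-1}(\beta-A_{\overline{\sigma}}\mathbf{m})}x_{\overline{\sigma}}^{\mathbf{m}}$ for a suitable $\mathbf{m}\in\N^{n-d}$ (whose $\overline{\tau}$-part is $m$ and whose $(\tau\setminus\sigma)$-part is read off from $\mathbf{m}'$), because the $\sigma$-component of $u(\mathbf{m})$ is exactly $-A_{\sigma}^{-1}A_{\overline{\sigma}}\mathbf{m}$. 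Collecting the scalar coefficients as $c_{\sigma,\mathbf{m}}$ yields the claimed expression. The main obstacle is the bookkeeping in this last step: one must check that the index sets match up so that every monomial appearing in $f$ is captured, and that the assignment $(\sigma,\mathbf{m})\mapsto$ monomial is well-defined across the different coefficients $f_m$ — i.e. that the $\Gamma$-series supports for the various shifted parameters $\beta-A_{\overline{\tau}}m$ glue into the single family indexed by $\mathbf{m}\in\N^{n-d}$ relative to $A$. This is where Remark \ref{partition} and Lemma \ref{equiv-lattice} (the partition of $\N^{n-d}$ by the classes $\Lambda_{\mathbf{k}}$) do the essential work.
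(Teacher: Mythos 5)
Your proposal is correct and follows essentially the same route as the paper's own proof: use Lemma \ref{lema-formal-solutions} to see each coefficient $f_m(x_\tau)$ as a holomorphic solution of $\M_{A_\tau}(\beta-A_{\overline{\tau}}m)$, invoke Lemma \ref{lema-holomorphic} together with the $\Gamma$-series construction on $A_\tau$ (indexed by $\sigma\in\mathrm{T}(\tau)$ and by the partition of Remark \ref{partition}) to get a basis at $p\in W_{\mathrm{T}(\tau)}$, then substitute and absorb $x_{\overline{\tau}}^m$ into the exponent data. The reindexing bookkeeping you flag in your last paragraph is exactly the step the paper also leaves terse with the display $f_{m_{\overline{\tau}}}(x_{\tau})=\sum_{\sigma}\sum_{m_{\overline{\sigma}\cap\tau}}\cdots$, and your handling of it is sound.
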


\begin{proof}
By Lemma \ref{lema-holomorphic} a basis of $\mathcal{H}om
(\M_{A_{\tau}}(\beta -A_{\overline{\tau}}\mathbf{m}_{\overline{\tau}},\cO_{Y_{\tau},p})$
for $p\in W_{\mathrm{T}(\tau )}\subseteq Y_{\tau }$ is given by the $\operatorname{vol}_{\Z \tau}(\Delta_{\tau})$ series
$\phi_{\sigma}^{\mathbf{k}}$ with $\sigma$ running in the
($d-1$)-simplices of $\mathrm{T}(\tau )$ and $\Lambda_{\mathbf{k}}$
running in the partition of $\N^{l-d}$ (see Remark \ref{partition}
and apply it to the matrix $A_{\tau}$ with $l$ columns and $\sigma
\subseteq \tau$). In particular we obtain that:

$$f_{m_{\overline{\tau}}}(x_{\tau})=\sum_{\sigma \in \mathrm{T}(\tau
)}\sum_{m_{\overline{\sigma} \cap \tau}\in \N^{l-d}} c_{\sigma ,
\mathbf{m}_{\overline{\sigma}}}x_{\sigma}^{A_{\sigma}^{-1}(\beta
-A_{\overline{\tau}}\mathbf{m}_{\overline{\tau}}-A_{\overline{\sigma}
\cap \tau}\mathbf{m}_{\overline{\sigma} \cap
\tau})}x_{\overline{\sigma} \cap
\tau}^{\mathbf{m}_{\overline{\sigma} \cap \tau}}$$ and this implies
the result.
\end{proof}

Using the partition $\{\Lambda_{\mathbf{k}(i)}:\; i=1,\ldots ,r\}$
of $\N^{n-d}$ (see Remark \ref{partition}) with $r=[\Z A :\Z \sigma
]$ we can write the formal solution in Lemma \ref{lema-formal} as:

$$f=\sum_{\sigma \in \mathrm{T}(\tau )}\sum_{i=1}^{r}\sum_{\mathbf{k}(i)+\mathbf{m}\in \Lambda_{\mathbf{k}(i)}} c_{\sigma ,
\mathbf{k}(i) +\mathbf{m}} x_{\sigma}^{A_{\sigma}^{-1}(\beta
-A_{\overline{\sigma}}(\mathbf{k}(i)+\mathbf{m}))}x_{\overline{\sigma}}^{\mathbf{k}(i)+\mathbf{m}}.$$
Let us denote by $v_{\sigma ,\mathbf{k}(i)+\mathbf{m}}$ the exponent
of the monomial $x_{\sigma}^{A_{\sigma}^{-1}(\beta
-A_{\overline{\sigma}}(\mathbf{k}(i)+\mathbf{m}))}x_{\overline{\sigma}}^{\mathbf{k}(i)+\mathbf{m}}$.

\vspace{.3cm}

Since Euler operators $E_i -\beta_i$ annihilate every monomial
$x^{v_{\sigma ,\mathbf{k}(i)+\mathbf{m}}}$ appearing in $f$ we just
need to use toric operators $\Box_u
=\partial^{u_{+}}-\partial^{u_{-}}$ with $u\in L_A =\ker (A)\cap
\Z^{n}$ in order prove that $f$ is annihilated by $H_A (\beta )$ if
and only if the formal series

$$\sum_{\mathbf{k}(i)+\mathbf{m}\in \Lambda_{\mathbf{k}(i)}}
c_{\sigma , \mathbf{k}(i) +\mathbf{m}}
x_{\sigma}^{A_{\sigma}^{-1}(\beta
-A_{\overline{\sigma}}(\mathbf{k}(i)+\mathbf{m}))}x_{\overline{\sigma}}^{\mathbf{k}(i)+\mathbf{m}}$$
is annihilated by $H_A (\beta )$ for all $\sigma \in \mathrm{T}(\tau
)$ and $i=1,\ldots ,r$.

\vspace{.3cm}

This is clear because $v_{\sigma
,\mathbf{k}(i)+\mathbf{m}}-v_{\sigma ' ,\mathbf{k}(j)+\mathbf{m}}\in
\Z^{n}$ if and only if $\sigma =\sigma '$ and $i=j$ (because $\beta$
is very generic and for fixed $\sigma$ we have Lemma
\ref{equiv-lattice}). Recall here that for $u\in L_A$ any pair of
monomials $x^{v}$, $x^{v'}$ verify that
$\partial^{u_{-}}(x^{v})=[v]_{u_{-}} x^{v-u_{-}}$ and
$\partial^{u_{+}}(x^{v'})=[v']_{u_{+}} x^{v'-u_{+}}$ and $
x^{v-u_{-}}=x^{v'-u_{+}}$ if and only if $v-v'=u$.

\vspace{.3cm}

Moreover, a series $\sum_{\mathbf{k}(i)+\mathbf{m}\in
\Lambda_{\mathbf{k}(i)}} c_{\sigma , \mathbf{k}(i) +\mathbf{m}}
x_{\sigma}^{A_{\sigma}^{-1}(\beta
-A_{\overline{\sigma}}(\mathbf{k}(i)+\mathbf{m}))}x_{\overline{\sigma}}^{\mathbf{k}(i)+\mathbf{m}}$
is annihilated by $I_A$ if and only if it is $c
\phi_{\sigma}^{\mathbf{k}(i)}$ for certain $c\in \C$.

\vspace{.3cm}

Thus we obtain that any formal solution of $\M_A (\beta )$ along
$Y_{\tau}$ at $p\in W_{\mathrm{T}(\tau )}\subseteq Y_{\tau}$ is a
linear combination of the linearly independent formal solutions
$\phi_{\sigma}^{\mathbf{k}}$ with $\sigma \in T(\tau )$ and
$\{\Lambda_{k(i)}: \; 1\leq i \leq \operatorname{vol}_{\Z
A}(\Delta_{\sigma})=[\Z A : \Z \sigma]\}$ the partition of
$\N^{n-d}$ associated with $\sigma$ (see Remark \ref{partition}).
That is we have a basis with cardinal:

$$\sum_{\sigma \in T(\tau )}\operatorname{vol}_{\Z
A}(\Delta_{\sigma})=\sum_{\sigma \in T(\tau )}\operatorname{vol}_{\Z
\tau}(\Delta_{\sigma}) [\Z A : \Z \tau ]=\operatorname{vol}_{\Z \tau
}(\Delta_{\tau})[\Z A : \Z \tau ]=\operatorname{vol}_{\Z
A}(\Delta_{\tau}).$$ This finishes the proof of Theorem
\ref{theorem-dimension-formal}.

\section{Irregularity of $\M_A (\beta )$ along coordinate
hyperplanes under some conditions on
$(A,\beta)$}\label{section-pointed}

Assume throughout this section that $A$ is a pointed matrix such
that $\Z A=\Z^d$ and that $Y$ is a coordinate hyperplane. Then we
have that the irregularity complex of order $s$,
$\operatorname{Irr}^{(s)}_Y (\M_{A}(\beta ))$, is a perverse sheaf
on $Y$ (see \cite{Mebkhout3}). This implies in particular the
existence of an analytic subvariety $S\subseteq Y$ with codimension
$q>0$ in $Y$ such that for all $p\in Y\setminus S$:

\begin{equation}
\chi (\operatorname{Irr}_Y^{(s)}(\M_A (\beta ))))_p = \dim
(\mathcal{H}^{0} (\operatorname{Irr}_Y^{(s)}(\M_A (\beta )))_p )
\label{Euler-Poincare}
\end{equation} Here $\chi (\mathcal{F})=\sum_{i\geq 0}(-1)^i \dim
(\mathcal{H}^i(\mathcal{F}))$ denotes the Euler-Poincaré
characteristic of a bounded constructible complex of sheaves
$\mathcal{F}\in \operatorname{D}_{c}^b (\C_Y )$. The characteristic
cycle of $\mathcal{F}\in \operatorname{D}_{c}^b (\C_Y )$ is the
unique lagrangian cycle $$\operatorname{CCh}(\mathcal{F})=m_Y
T_Y^{\ast} Y +\sum_{\alpha :\: \dim Y_{\alpha} < \dim Y} m_{\alpha }
T_{Y_{\alpha}}^{\ast} Y \subseteq T^{\ast} Y$$ that satisfies the
index formula:

$$\chi (\mathcal{F})=  \operatorname{Eu}(m_Y  Y +\sum_{\alpha :\: \dim Y_{\alpha} < \dim
Y}(-1)^{\operatorname{codim}_Y (Y_{\alpha})} m_{\alpha }
\overline{Y_{\alpha}} )$$ where $ \operatorname{Eu}$ denotes the
Euler's morphism between the group of cycles on $Y$ and  the group
of constructible functions on $Y$ with integer values. Thus by
(\ref{Euler-Poincare}) we have that for all $p\in Y\setminus S$:

\begin{equation}
\dim (\mathcal{H}^{0} (\operatorname{Irr}_Y^{(s)}(\M_A (\beta )))_p
= \operatorname{Eu}(
\operatorname{CCh}(\operatorname{Irr}_Y^{(s)}(\M_A (\beta ))) )_p =
m_Y \label{multiplicity-TY}
\end{equation} where $m_Y$ is the multiplicity of $T^{\ast}_Y Y$ in
$\operatorname{CCh}(\operatorname{Irr}_Y^{(s)}(\M_A (\beta )))$.

\vspace{.3cm}

Y. Laurent and Z. Mebkhout provided a formula in
\cite{Laurent-Mebkhout} to obtain the cycle
$\operatorname{CCh}(\operatorname{Irr}_Y^{(s)}(\M_A (\beta )))$ in
terms of the $(1+ \epsilon)$-characteristic cycle and the $(s +
\epsilon )$-characteristic cycle of $\M_A (\beta )$ for $\epsilon
>0$ small enough. Because of the correspondence established in
\cite{Laurent-Mebkhout} in order to compute the multiplicity $m_Y$
of $T^{\ast}_Y Y$ in
$\operatorname{CCh}(\operatorname{Irr}_Y^{(s)}(\M_A (\beta )))$ we
only need to know the multiplicity of $T_{X}^{\ast} X$ and
$T_Y^{\ast} X$ in the $(1+\epsilon ) $-characteristic cycle of $\M_A
(\beta )$ and the $(s + \epsilon )$-characteristic cycle of $\M_A
(\beta )$ with respect to $Y$ for $\epsilon >0$ small enough.

\vspace{.3cm}

We are going to use the multiplicities formula for the
$s$-characteristic cycle of $\M_A (\beta )$ obtained by M. Schulze
and U. Walther in \cite{SW} in the case when $A$ is pointed and
$\beta$ is non-rank-jumping. First of all we need to recall some
definitions given in \cite{SW}.

\vspace{.3cm}

Let us consider $\Phi_A^{s}\ni \tau \subseteq \tau '  \in \Phi_A^{s,
d-1}$ and the natural projection $$\pi_{\tau , \tau '}: \Z \tau '
\rightarrow \Z \tau ' / (\Z \tau ' \cap \Q  \tau ).$$

\begin{defi}\label{def-normal-vol}
In  a lattice $\Lambda $, the volume function
$\operatorname{vol}_{\Lambda }$ is normalized so that the unit
simplex of $\Lambda$ has volume $1$. We abbreviate
$\operatorname{vol}_{\tau ,\tau '}:=\operatorname{vol}_{\pi_{\tau ,
\tau '} (\Z \tau ')}$.
\end{defi}

\begin{defi}
For $\Phi_A^{s}\ni \tau \subseteq \tau '  \in \Phi_A^{s, d-1}$,
define the polyhedra $$P_{\tau , \tau '} :=
\operatorname{conv}(\pi_{\tau , \tau '} (\tau' \cup \{0\} )), \; \;
Q_{\tau , \tau '}:= \operatorname{conv}(\pi_{\tau , \tau '} (\tau'
\setminus \tau ))$$ where $\operatorname{conv}$ means to take the
convex hull.

\end{defi}

The following theorem was proven by M. Schulze and U. Walther (see
\cite[Th. 4.21]{SW} and \cite[Cor. 4.12]{SW}).

\begin{teo}
For  generic $\beta \in \C^d$ (more precisely, non-rank-jumping) and
$\tau \in \Phi_{A}^{s}$, the multiplicity of $\overline{C}_A^{\tau}$
in the $s$-characteristic cycle of $\M_A (\beta )$ is:

$$\mu_A^{s,\tau }=\sum_{\tau \subseteq \tau '   \in \Phi_A^s } [\Z^d
:\Z \tau ] \cdot [(\Z \tau ' \cap \Q \tau  ): \Z \tau ] \cdot
\operatorname{vol}_{\tau ,\tau '}(P_{\tau , \tau '}\setminus Q_{\tau
,\tau '}).$$ Here $\overline{C}_A^{\tau}$ is the closure in
$T^{\ast} X$ of the conormal space to the orbit $ O_A^{\tau
}\subseteq T_0^{\ast} X$, where $ O_A^{\tau }$ is the orbit of
$1_{\tau} \in \{0,1 \}^n$ ($(1_{\tau })_i  =1$ if $a_i \in \tau$,
$(1_{\tau })_i =0$ if $a_i \notin \tau $) by the $d$-torus action:

$$\begin{array}{rcl}
   (\C^{\ast})^d \times T_{0}^{\ast} X & \longrightarrow & T_{0}^{\ast} X \\
    (t , \xi )  & \mapsto & t\cdot \xi := (t^{a_1 } \xi_1 ,\ldots
    ,t^{a_n } \xi_n )
  \end{array}$$
\end{teo}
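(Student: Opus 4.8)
The plan is to follow the strategy of Schulze and Walther, so in the end I would simply invoke \cite[Theorem 4.21 and Corollary 4.12]{SW}; let me nonetheless outline the argument one would carry out. \emph{Step 1: identify the components of the $s$-characteristic cycle.} By definition $\operatorname{Ch}^s(\M_A(\beta))=\mathcal{V}(\sqrt{\operatorname{in}_{L_s}(\HAb)})$, and exactly as in the proof of Lemma \ref{lema-descarta-slopes} this sits inside $\mathcal{V}(\widetilde{I}_A^{s}+\langle A\mathbf{x}\underline{\xi}\rangle)$. Since $\operatorname{Ch}^s(\M_A(\beta))$ is known to be purely $n$-dimensional, each of its components is a component of that bi-homogeneous variety; combining the description of $\widetilde{I}_A^{s}$ in Lemma \ref{radinL} with the standard-pair decomposition of $\operatorname{in}_{L_s}(I_A)$ used in the proof of Lemma \ref{lema-dim-n}, one checks that the components are precisely the closures $\overline{C}_A^{\tau}$ of the conormals to the torus orbits $O_A^{\tau}$, indexed by the faces $\tau\in\Phi_A^{s}$. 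So only the multiplicities $\mu_A^{s,\tau}$ remain to be computed.

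\emph{Step 2: localize at a generic point of $\overline{C}_A^{\tau}$.} I would compute $\mu_A^{s,\tau}$ as the length of $\operatorname{gr}^{L_s}(\D/\HAb)$ localized at the generic point of $\overline{C}_A^{\tau}$. Restricting to a slice transverse to the orbit $O_A^{\tau}$ converts this into the computation of the holonomic rank of a hypergeometric-type system attached to the projected configurations $\pi_{\tau,\tau'}(\tau'\setminus\tau)$, with $\tau'$ ranging over the facets of $\Phi_A^{s}$ containing $\tau$. For non-rank-jumping $\beta$, this rank is, by the results of \cite{Adolphson}, a sum of normalized volumes, one summand per such $\tau'$.

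\emph{Step 3: assemble the volumes.} The standard pairs of $\operatorname{in}_{L_s}(I_A)$ that survive the localization are exactly those indexed by facets $\tau'\supseteq\tau$, and the contribution of $\tau'$ is the normalized volume $\operatorname{vol}_{\tau,\tau'}(P_{\tau,\tau'})$, corrected by subtracting the part $\operatorname{vol}_{\tau,\tau'}(Q_{\tau,\tau'})$ that fails to meet $O_A^{\tau}$ transversally. The lattice indices $[\Z^d:\Z\tau]$ and $[(\Z\tau'\cap\Q\tau):\Z\tau]$ record, respectively, the comparison between the lattice $\pi_{\tau,\tau'}(\Z\tau')$ in which $\operatorname{vol}_{\tau,\tau'}$ is normalized and the ambient lattice, and the multiplicity with which the slice wraps the orbit in the $\tau$-directions. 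Summing over $\tau'$ yields the displayed formula.

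\textbf{Main obstacle.} The delicate point is Step 3: pinning down exactly which standard pairs (equivalently, which fake exponents of $\M_A(\beta)$ along the orbit $O_A^{\tau}$) contribute to the stratum $\overline{C}_A^{\tau}$ rather than to a deeper stratum, and proving that their total contribution is the \emph{polyhedral difference} $\operatorname{vol}_{\tau,\tau'}(P_{\tau,\tau'}\setminus Q_{\tau,\tau'})$ and not merely $\operatorname{vol}_{\tau,\tau'}(P_{\tau,\tau'})$. This is precisely the content of the indicial-ideal analysis carried out in \cite[Section 4]{SW}, which also uses the non-rank-jumping hypothesis on $\beta$ to guarantee that the relevant local solution space has the expected dimension; accordingly we take the statement as quoted from \cite[Theorem 4.21 and Corollary 4.12]{SW}.
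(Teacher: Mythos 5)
Your proposal correctly identifies that this theorem is not proved in the paper but is simply quoted from Schulze and Walther (\cite[Th.\ 4.21]{SW} and \cite[Cor.\ 4.12]{SW}), and you ultimately fall back on that citation exactly as the paper does. The outline you give of the internal argument (identifying the components $\overline{C}_A^{\tau}$ via Lemmas \ref{radinL}--\ref{lema-dim-n}, localizing at the generic point, and assembling volumes) is a reasonable sketch of what happens inside \cite{SW}, but it is supplementary: the paper itself offers no proof, so your concluding appeal to \cite{SW} is the matching move.
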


Assume that $Y=\{x_n = 0\}$ by reordering the variables. We are
interested in the multiplicities of $\overline{C}_A^{ \emptyset }
=T_X^{\ast} X $ and $\overline{C}_A^{ \{ n \} } =T_Y^{\ast} X $ in
the $r$-characteristic cycles of $\M_A (\beta)$ for $r=s+ \epsilon$
and $r=1+ \epsilon$ with $\epsilon >0$ small enough. In particular,
we need to compute $\mu_A^{s +\epsilon, \emptyset}, \mu_A^{s +
\epsilon ,\{n\}}$, $\mu_A^{1+\epsilon , \emptyset }$ and $\mu_A^{1 +
\epsilon , \{n\} }$.

\vspace{.3cm}

It is a classical result that $\mu_A^{1, \emptyset } =
\operatorname{rank}(\M_A (\beta )) = \operatorname{vol}_{\Z^d }
(\Delta_A )$ for generic $\beta$ (see \cite{GKZ}, \cite{Adolphson}).

\vspace{.3cm}

From \cite[Corollay 4.22]{SW} if $\tau =\emptyset $ then $$\mu_A^{s,
\emptyset } = \operatorname{vol}_{\Z^d } (\cup_{\tau ' \in
\Phi_A^{s, d-1}}(\Delta_{\tau '}^1 \setminus
\operatorname{conv}(\tau '))).$$

Since $\Phi_A^{s+\epsilon} $ is constant for $\epsilon >0$ small
enough we have that all its faces $\tau$ are $F$-homogeneous
and then $\operatorname{vol}_{\Z^d }(\operatorname{conv}(\tau ))=0$. As a
consequence,

$$\mu_A^{s+\epsilon, \emptyset}=\operatorname{vol}_{\Z^d }(\cup_{\tau ' \in
\Phi_A^{s +\epsilon  , d-1}}(\Delta_{\tau '}^1 )) =
\operatorname{vol}_{\Z^d }(\cup_{\tau ' \in \Phi_A^{s ,
d-1}}(\Delta_{\tau '}^1 ) ).$$ Let us compute $\mu_A^{r, \{n \}}$
for $r=s+\epsilon$ and $r=1+ \epsilon$.

\vspace{.3cm}

Consider any $\tau \in \Phi_A^{s+\epsilon, d-1}$ such that $n\in
\tau$. Since $\epsilon >0$ is generic ($\Phi_A^{t, d-1}$ is locally
constant at $t=s+\epsilon$) we have that $a_n \notin \Q (\tau
\setminus \{a_n \})$ and hence there exists certain
$(d-1)$-simplices $\sigma_1 ,\ldots , \sigma_r $  such that $n\in
\sigma_i \subseteq \tau$, $\tau =\cup_{i} \sigma_i $, $\sigma_i \cap
\sigma_j$ is an $k$-simplex with $k\leq d-2$ ($\sigma_1 ,\ldots ,
\sigma_r $ is a triangulation of $\tau$). Then
$\operatorname{vol}_{\Z^d}(\Delta_{\tau})= \sum_{i=1}^r
\operatorname{vol}_{\Z^d}(\Delta_{\sigma_i}) $ and we want to prove
that

\begin{equation} \operatorname{vol}_{\Z^d}(\Delta_{\tau})= [\Z^d :
\Z \tau ] \cdot [\Z \tau \cap \Q a_n : \Z a_n ] \cdot
\operatorname{vol}_{\{n\},\tau}(P_{\{n\},\tau } \setminus
Q_{\{n\},\tau }) \label{formula-volume}\end{equation} Since $\Z
\sigma_i \subseteq \Z \tau \subseteq \Z^d$ then
$\operatorname{vol}_{\Z^d}(\Delta_{ \sigma_i})=[\Z^d :\Z \sigma_i ]=
[\Z^d : \Z\tau] \cdot [\Z \tau: \Z \sigma_i ]$ so we only need to
prove:

$$\sum_{i=1}^r [\Z \tau : \Z \sigma_i ]= [\Z \tau \cap \Q a_n : \Z a_n ] \cdot
\operatorname{vol}_{\{n\},\tau}(P_{\{n\},\tau } \setminus
Q_{\{n\},\tau }) .$$ But $a_n \notin \Q (\tau \setminus \{a_n \})$
implies that $[\Z \tau \cap \Q a_n : \Z a_n] =1 $ and $\tau$ is
$F$-homogeneous so we have to prove that:

$$ \operatorname{vol}_{\{n\},\tau}(P_{\{n\},\tau })
=\sum_{i=1}^r [\Z \tau : \Z \sigma_i ].$$ We observe that
$\pi_{\{n\},\tau}(\tau \cup \{0\})=(\tau \setminus \{n\})\cup \{0\}$
in $\Z \tau / (\Z \tau \cap \Q a_n )= \Z (\tau \setminus \{n\})$.
Consider a ($d-2$)-simplex $\widetilde{\sigma }$ such that $\Z
\widetilde{\sigma } =\Z (\tau \setminus \{n\})$. Since $a_n \notin
\sum_{i\in \tau \setminus \{n \}} \Q a_i $ there exists a hyperplane
$H$ such that $a_i \in H $ for all $ i \in \tau\setminus \{n \}\}$,
$0\in H$ and $\widetilde{\sigma} \subseteq H$. Recall that the
Euclidean volume of the convex hull of a bounded polytope $\Delta $
contained in a hyperplane $H \subseteq \R^d$ and a point $c\notin H$
is the product of the relative volume of the polytope
$\operatorname{vol}_{rel}(\Delta )$ and the distance from $c$ to
$H$, $\operatorname{d}(c,H)$, divided by $d!$. Hence, we have the
following equalities:

\vspace{.3cm}

\noindent $\displaystyle
\operatorname{vol}_{\{n\},\tau}(P_{\{n\},\tau })=
\frac{\operatorname{vol}_{rel}(\Delta_{\tau \setminus
\{n\}})}{\operatorname{vol}_{rel}(\Delta_{\widetilde{\sigma}})} =
\frac{\operatorname{vol}
(\Delta_{\tau})}{\operatorname{vol}(\Delta_{\widetilde{\sigma} \cup
\{n \}})} =\sum_{i=1}^r \frac{\operatorname{vol}(\Delta_{\sigma_i
})}{\operatorname{vol}(\Delta_{\widetilde{\sigma} \cup \{n \}})}= \\
= \sum_{i=1}^r \frac{[\Z^d : \Z \sigma_i ]}{[\Z^d : \Z \tau]}=
\sum_{i=1}^r [\Z \tau : \Z \sigma_i ]$.

\vspace{.3cm}

We have proved (\ref{formula-volume}) and as a consequence the
following Lemma.

\begin{lema}\label{lema-multiplicidad}
Consider $s\geq 1$ and $\beta$ non-rank-jumping. Then for all
$\epsilon >0$ small enough:

$$\mu_A^{s+\epsilon ,\{n \}}=\sum_{n \in \tau \in \Phi_A^{s+\epsilon} } \operatorname{vol}_{\Z^d }(\Delta_{\tau }) .$$

\end{lema}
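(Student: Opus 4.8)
The statement to prove, Lemma \ref{lema-multiplicidad}, is a restatement of formula (\ref{formula-volume}) summed over all facets $\tau$ of $\Phi_A^{s+\epsilon}$ containing $n$. So the plan is essentially to assemble the ingredients already established in the paragraphs immediately preceding the Lemma. First I would recall the general multiplicity formula of Schulze--Walther (the displayed Theorem on $\mu_A^{s,\tau}$) applied to $\tau = \{n\}$: it reads
$$\mu_A^{s+\epsilon,\{n\}} = \sum_{\{n\}\subseteq \tau' \in \Phi_A^{s+\epsilon}} [\Z^d : \Z\{n\}]\cdot [(\Z\tau' \cap \Q a_n):\Z a_n]\cdot \operatorname{vol}_{\{n\},\tau'}(P_{\{n\},\tau'}\setminus Q_{\{n\},\tau'}).$$
Here I must be slightly careful: the index set runs over all faces $\tau' \in \Phi_A^{s+\epsilon}$ with $\{n\} \subseteq \tau'$, not just the facets; but I would reduce to facets by the standard fact that a face of smaller dimension contributes a polytope of strictly smaller dimension in $\Z\tau'/(\Z\tau'\cap \Q a_n)$, hence relative volume zero, so only the $(d-1)$-dimensional faces $\tau'$ survive. (If the convention in \cite{SW} is already that $\tau'$ ranges over facets this step is vacuous; I would state it to be safe.)

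Next I would invoke the genericity of $\epsilon$: since $\Phi_A^{t,d-1}$ is locally constant at $t = s+\epsilon$ (cf. the discussion just before Lemma \ref{bi-hom} and the proof of Theorem \ref{characterization-slopes}), every facet $\tau$ of $\Phi_A^{s+\epsilon}$ with $n\in\tau$ satisfies $a_n \notin \Q(\tau\setminus\{n\})$. This has two consequences used in the preceding computation: $[\Z\tau \cap \Q a_n : \Z a_n] = 1$, and $\tau$ is $F$-homogeneous, so $\operatorname{vol}_{\Z^d}(\operatorname{conv}(\tau)) = 0$, which forces $Q_{\{n\},\tau}$ to be a single point and hence $P_{\{n\},\tau}\setminus Q_{\{n\},\tau}$ has the same relative volume as $P_{\{n\},\tau}$. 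Also $[\Z^d:\Z\{n\}]$ should be interpreted via the normalization convention of Definition \ref{def-normal-vol} (the factor is absorbed consistently with how \cite[Cor. 4.12]{SW} is stated); the net effect, already carried out in the displayed chain of equalities above the Lemma, is that
$$[\Z^d:\Z\{n\}]\cdot[(\Z\tau\cap\Q a_n):\Z a_n]\cdot\operatorname{vol}_{\{n\},\tau}(P_{\{n\},\tau}\setminus Q_{\{n\},\tau}) = \operatorname{vol}_{\Z^d}(\Delta_\tau),$$
which is exactly (\ref{formula-volume}).

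The core computation — establishing (\ref{formula-volume}) — is the one displayed in the text: triangulate $\tau$ by $(d-1)$-simplices $\sigma_1,\dots,\sigma_r$ containing $n$, use $\operatorname{vol}_{\Z^d}(\Delta_{\sigma_i}) = [\Z^d:\Z\sigma_i] = [\Z^d:\Z\tau]\cdot[\Z\tau:\Z\sigma_i]$, pick a $(d-2)$-simplex $\widetilde\sigma$ with $\Z\widetilde\sigma = \Z(\tau\setminus\{n\})$, use the hyperplane $H$ through $0$ and $\tau\setminus\{n\}$, and apply the elementary formula expressing the Euclidean volume of the cone over a polytope in $H$ with apex $a_n$ as (relative volume of the base) $\times$ (distance from $a_n$ to $H$) $/\,d!$. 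This yields the chain
$$\operatorname{vol}_{\{n\},\tau}(P_{\{n\},\tau}) = \frac{\operatorname{vol}_{rel}(\Delta_{\tau\setminus\{n\}})}{\operatorname{vol}_{rel}(\Delta_{\widetilde\sigma})} = \frac{\operatorname{vol}(\Delta_\tau)}{\operatorname{vol}(\Delta_{\widetilde\sigma\cup\{n\}})} = \sum_{i=1}^r\frac{\operatorname{vol}(\Delta_{\sigma_i})}{\operatorname{vol}(\Delta_{\widetilde\sigma\cup\{n\}})} = \sum_{i=1}^r[\Z\tau:\Z\sigma_i].$$
Finally, summing (\ref{formula-volume}) over all facets $\tau\ni n$ of $\Phi_A^{s+\epsilon}$ and recalling from the reduction above that non-facet faces contribute nothing gives
$$\mu_A^{s+\epsilon,\{n\}} = \sum_{n\in\tau\in\Phi_A^{s+\epsilon}}\operatorname{vol}_{\Z^d}(\Delta_\tau),$$
the assertion of the Lemma. \textbf{Main obstacle.} The genuinely delicate point is the bookkeeping of lattice indices: matching the normalization in Definition \ref{def-normal-vol} with the factors $[\Z^d:\Z\tau]$, $[(\Z\tau\cap\Q a_n):\Z a_n]$, and $[\Z\tau:\Z\sigma_i]$ in exactly the way \cite[Cor. 4.12, Cor. 4.22]{SW} are phrased, and making sure the "$F$-homogeneous $\Rightarrow$ $\operatorname{vol}(\operatorname{conv}(\tau)) = 0$ and $Q_{\{n\},\tau}$ is a point" reduction is airtight for all facets simultaneously — i.e. that the genericity of $\epsilon$ is used uniformly. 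The geometric volume-of-a-cone identity is routine; the risk is purely in the index arithmetic and in the precise form in which the Schulze--Walther formula is being quoted.
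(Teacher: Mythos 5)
Your proposal reproduces the paper's argument: the Lemma is deduced by summing formula (\ref{formula-volume}) over all facets $\tau$ of $\Phi_A^{s+\epsilon}$ containing $n$, and (\ref{formula-volume}) itself is obtained exactly as you describe, via genericity of $\epsilon$ (giving $a_n\notin\Q(\tau\setminus\{n\})$, hence $[\Z\tau\cap\Q a_n:\Z a_n]=1$ and the vanishing of the $Q$-term), the lattice-index factorization $[\Z^d:\Z\sigma_i]=[\Z^d:\Z\tau]\cdot[\Z\tau:\Z\sigma_i]$ over a triangulation of $\tau$ by simplices $\sigma_i\ni n$, the auxiliary $(d-2)$-simplex $\widetilde\sigma$, and the elementary cone-volume identity. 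The only small imprecision is your claim that $Q_{\{n\},\tau}$ is ``a single point'': it need not be, but genericity of $\epsilon$ ensures it has zero relative $(d-1)$-volume in $\Z\tau/(\Z\tau\cap\Q a_n)$, which is all the computation uses.
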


We close this section with the following result about the
irregularity along any coordinate hyperplane $Y$ of the
hypergeometric system $\M_A (\beta )$ associated with a full rank
pointed matrix $A$ with $\Z A=\Z^d$. It is a consequence of Lemma
\ref{lema-multiplicidad} and the results in \cite{Laurent-Mebkhout}.

\begin{teo}\label{Teorema-irreg}
If $\beta\in \C^d$ is generic (more precisely, non-rank-jumping)
then $$\operatorname{dim}_{\C } (\mathcal{H}^0
(\operatorname{Irr}_Y^{(s)} (\M_A (\beta )))_p ) =\sum_{n\notin \tau
\in \Phi_A^{s}}\operatorname{vol}_{\Z^d }(\Delta_{\tau }) -
\sum_{n\notin \tau \in \Phi_A^{1}} \operatorname{vol}_{\Z^d
}(\Delta_{\tau })$$ for all $p\in Y\setminus S$, where $S$ is a
subvariety of $Y$ with $\dim S < \dim Y$. Then, for very generic
$\beta$ the nonzero classes in $\cQ_Y (s)$ of the constructed series
$\phi_{\sigma }^{\mathbf{k}}$ with $\sigma \in \mathrm{T} '$ form a
basis in their common domain of definition $U\subseteq Y$.
\end{teo}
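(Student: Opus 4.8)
The plan is, in three steps: (i) reduce the computation of $\dim_{\C}(\mathcal{H}^0(\operatorname{Irr}_Y^{(s)}(\M_A(\beta)))_p)$ to the multiplicity $m_Y$ of $T_Y^{\ast}Y$ in $\operatorname{CCh}(\operatorname{Irr}_Y^{(s)}(\M_A(\beta)))$; (ii) evaluate $m_Y$ through the Laurent--Mebkhout recipe, using the Schulze--Walther multiplicity formulas; (iii) identify the resulting number with the count of the Gevrey series $\phi_{\sigma}^{\mathbf{k}}$. For (i): since $Y$ is a smooth hypersurface, $\operatorname{Irr}_Y^{(s)}(\M_A(\beta))$ is a perverse sheaf on $Y$ \cite{Mebkhout3}, and formula (\ref{multiplicity-TY}) gives $\dim_{\C}(\mathcal{H}^0(\operatorname{Irr}_Y^{(s)}(\M_A(\beta)))_p)=m_Y$ for all $p$ in the complement of a proper analytic subvariety $S\subseteq Y$, so it is enough to compute $m_Y$.

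For (ii), the Laurent--Mebkhout correspondence recalled at the beginning of this section expresses $m_Y$ in terms of the multiplicities $\mu_A^{1+\epsilon,\emptyset},\mu_A^{1+\epsilon,\{n\}},\mu_A^{s+\epsilon,\emptyset},\mu_A^{s+\epsilon,\{n\}}$ of the conormals $\overline{C}_A^{\emptyset}=T_X^{\ast}X$ and $\overline{C}_A^{\{n\}}=T_Y^{\ast}X$ in the $(1+\epsilon)$- and $(s+\epsilon)$-characteristic cycles of $\M_A(\beta)$ along $Y$, for $\epsilon>0$ small. I would substitute here Lemma \ref{lema-multiplicidad} for the $\mu_A^{r,\{n\}}$, the classical value $\mu_A^{1,\emptyset}=\operatorname{vol}_{\Z^d}(\Delta_A)$, and the formula for $\mu_A^{s+\epsilon,\emptyset}$ recalled above; then, using that for $r$ generic near $1$ and near $s$ the faces of $\Phi_A^{r}$ are $F$-homogeneous, a combinatorial simplification --- for each umbrella the normalized volume of $\Delta_A$ splits into the contribution of the facets that meet $a_n$ and that of the facets that avoid $a_n$ --- should collapse the expression for $m_Y$ to $\sum_{n\notin\tau\in\Phi_A^{s}}\operatorname{vol}_{\Z^d}(\Delta_{\tau})-\sum_{n\notin\tau\in\Phi_A^{1}}\operatorname{vol}_{\Z^d}(\Delta_{\tau})$, which is the asserted formula ($\Phi_A^{s}$ and $\Phi_A^{1}$ being the umbrellas obtained as $\epsilon\to 0^{+}$).

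For (iii), a very generic $\beta$ is non-rank-jumping, so the formula just proven applies. Write $\tau=\{1,\dots,n-1\}$, so that $Y=Y_{\tau}$, and take $\mathrm{T}'$ to be a regular triangulation of $A_{\tau}$ satisfying (\ref{condition-triangulation-2}) and refining the facets of $\Phi_A^{s}$ and of $\Phi_A^{1}$ that avoid $a_n$ (it exists by Lemma \ref{lema-existence-triangulation}). The facets of $\Phi_A^{s}$ avoiding $a_n$ are then triangulated by the simplices in $\mathrm{T}(\tau,s)$ and those of $\Phi_A^{1}$ avoiding $a_n$ by the simplices in $\mathrm{T}(\tau,1)$, so additivity of the normalized volume together with $\Z A=\Z^d$ gives
$$\sum_{n\notin\tau\in\Phi_A^{s}}\operatorname{vol}_{\Z^d}(\Delta_{\tau})-\sum_{n\notin\tau\in\Phi_A^{1}}\operatorname{vol}_{\Z^d}(\Delta_{\tau})=\sum_{\sigma\in\mathrm{T}(\tau,s)\setminus\mathrm{T}(\tau,1)}\operatorname{vol}_{\Z A}(\Delta_{\sigma}).$$
By (\ref{lower-bound-irreg-s}) the right-hand side is a lower bound for $\dim_{\C}(\mathcal{H}^0(\operatorname{Irr}_Y^{(s)}(\M_A(\beta)))_p)$, attained by the series $\phi_{\sigma}^{\mathbf{k}}$ with $\sigma\in\mathrm{T}'$ whose Gevrey index along $Y$ lies in $(1,s]$; for very generic $\beta$ these series are nonzero with pairwise disjoint supports, hence linearly independent, and their classes in $\cQ_Y(s)$ are nonzero since their index exceeds $1$. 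As their number equals the dimension computed in (i)--(ii), they form a basis of $\mathcal{H}^0(\operatorname{Irr}_Y^{(s)}(\M_A(\beta)))_p=\mathcal{H}om_{\D}(\M_A(\beta),\cQ_Y(s))_p$ on the nonempty relatively open set $U\subseteq Y$ where all of them are defined (shrunk if needed so that $U\subseteq Y\setminus S$).

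The step I expect to be the real difficulty is (ii): turning the Laurent--Mebkhout recipe into an explicit integer and then carrying out the combinatorics of normalized volumes so that the combination of the four Schulze--Walther multiplicities collapses precisely to $\sum_{n\notin\tau\in\Phi_A^{s}}\operatorname{vol}_{\Z^d}(\Delta_{\tau})-\sum_{n\notin\tau\in\Phi_A^{1}}\operatorname{vol}_{\Z^d}(\Delta_{\tau})$. Once that formula is established, step (iii) is essentially formal, resting on the constructions of Sections \ref{construction-Gevrey}, \ref{slopes-simplex-section} and \ref{basis-Gevrey} and on the disjoint-supports argument.
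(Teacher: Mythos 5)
Your proposal follows the paper's approach exactly: the paper does not write out a detailed proof of Theorem \ref{Teorema-irreg} but states it as a consequence of Lemma \ref{lema-multiplicidad} and the Laurent--Mebkhout index formula, relying on the chain (\ref{Euler-Poincare})--(\ref{multiplicity-TY}) (perversity $\Rightarrow$ $\dim\mathcal{H}^0 = m_Y$ at generic $p$), the Laurent--Mebkhout expression of $m_Y$ through the $(1+\epsilon)$- and $(s+\epsilon)$-characteristic cycles, the Schulze--Walther multiplicity formulas for $\mu_A^{r,\emptyset}$, Lemma \ref{lema-multiplicidad} for $\mu_A^{r,\{n\}}$, and then Theorem \ref{GevreyII} (via (\ref{lower-bound-irreg-s})) with the disjoint-supports argument to identify the basis for very generic $\beta$. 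Your three-step plan reproduces precisely these ingredients, and the arithmetic "collapse" in step (ii) that you flag as the delicate point is likewise left implicit in the paper.
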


\begin{nota}
Notice that Theorem \ref{Teorema-irreg} implies that under the assumptions of this section equality holds in (\ref{lower-bound-irreg-s}).
\end{nota}


\begin{thebibliography}{XXX}

\bibitem{Adolphson} Adolphson, A. \textit{A-hypergeometric functions and rings generated by
monomials}. Duke Mathematical Journal 73 (1994), nº 2, p. 269-290.

\bibitem{BBD} Beilinson, A. A., Bernstein, J. and  Deligne, P. \textit{Faisceaux
pervers}.  Analysis and topology on singular spaces, I (Luminy,
1981),  5--171, Ast\'{e}risque, 100, Soc. Math. France, Paris, 1982.

\bibitem{Berkesch}  Berkesch, Ch.
\textit{The rank of a hypergeometric system}. {\tt arXiv:0807.0453v1
[math.AG]}.

\bibitem{Bjork} Björk, J. E. \textit{Analytic $\D$-modules and
applications}. Mathematics and its Applications 247, Kluwer Academic
Publishers Group, Dordrecht (1993).


\bibitem{Castro-Takayama} Castro-Jiménez, F.J., Takayama, N.
\textit{Singularities of  the hypergeometric $\D$-module associated
with a monomial curve}. Trans. Amer. Math. Soc., vol.355, no. 9, p.
3761-3775 (2003).

\bibitem{FC1} Fernández-Fernández, M.C., Castro-Jiménez, F.J.
\textit{Gevrey solutions of irregular hypergeometric systems in two
variables}. {\tt arXiv:0811.3390v1 [math.AG]}.

\bibitem{FC2} Fernández-Fernández, M.C., Castro-Jiménez, F.J.
\textit{Gevrey solutions of the irregular hypergeometric system
associated with an affine monomial curve}. Trans. Amer. Math. Soc.,
in press. {\tt arXiv:0811.3392v1 [math.AG]}.

\bibitem{GGZ} I. M. Gel'fand, M. I. Graev, A. V. Zelevinsky, \textit{Holonomic
systems of equations and series of hypergeometric type}. Dokl. Akad.
Nauk SSSR 295 (1987), no. 1, 14-19.

\bibitem{GKZ} I.M. Gel'fand, A.V. Zelevinsky, M.M. Kapranov,
\textit{Hypergeometric functions and toral manifolds}. Funktional
Anal., 23 (1989), 2, 12-26.

\bibitem{Grothendieck} A. Grothendieck. \textit{Cohomologie des faisceaux cohérents et
théorèmes de Lefschetz locaux et globaux}. SGA 2. Publications de
l'IHES (1962).


\bibitem{hartillo_rmi} Hartillo, M. I. \textit{Hypergeometric slopes of codimension
1}. Revista Matemática Iberoamericana, vol. 19, no. 2 (2003),
455-466.


\bibitem{hartillo_trans} Hartillo, M. I. \textit{Irregular
hypergeometric sistems associated with a singular monomial curve}.
Trans. Amer. Math. Soc., vol. 357, no. 11 (2004), 4633-4646.


\bibitem{Hotta} Hotta, R. \textit{Equivariant $\D$-modules}. {\tt arXiv.org (1998), no.RT/9805021}.

\bibitem{Laurent-ast-85} Laurent, Y. \textit{Calcul d'indices et irr\'{e}gularit\'{e}
pour les syst\`{e}mes holonomes}.  Differential systems and
singularities (Luminy, 1983).  Ast\'{e}risque  No. 130  (1985),
352--364.


\bibitem{Laurent-ens-87} Laurent, Y. \textit{Polyg\^one de Newton et $b$-fonctions pour
les modules micro\-diff\'erentiels}. Annales scientifiques de l'ENS
$4^e$ s\'erie, tome 20, no. 3 (1987), 391-441.


\bibitem{Laurent-Mebkhout} Laurent, Y., Mebkhout, Z. \textit{Pentes
algébriques et pentes analytiques d'un $\D$-module}. Annales
Scientifiques de L'É.N.S. $4^e$ série, tome 32, nº 1 (1999) p.39-69.

\bibitem{MMW} Matusevich, L. F., Miller, E., Walther, U. \textit{Homological
methods for hypergeometric families}. J. Amer. Math. Soc. 18 (2005),
4, p.919-941.


\bibitem{Mebkhout4} Mebkhout, Z. \textit{Sur le théorème de
semi-continuité des équations différentielles}. Astérisque 130
(1985), 365-417.


\bibitem{Mebkhout} Mebkhout, Z. \textit{Le théorème de positivité de l'irrégularité pour les
$\D_{X}$-modules}, in The Grothendieck Festschrift, Progress in
Math., vol.88, no.3, Birkhäuser (1990) p.83-131.

\bibitem{Mebkhout3} Mebkhout, Z. \textit{Le théorème de positivité, le théorème de comparaison et le théorème d'existence de
Rieman}. Séminaires et Congrès 8 (2004), 163-310.


\bibitem{Ohara-Takayama} Ohara, K., Takayama, N. \textit{Holonomic rank of A-hypergeometric
differential-difference equations}. To appear in J. Pure Appl.
Algebra. {\tt arXiv:0706.2706}.


\bibitem{PST} Passare, M., Sadykov, T., Tsikh, A. \textit{Singularities of hypergeometric functions in several
variables}. Compositio Math. 141 (2005) 787-810.


\bibitem{Saito} Saito, M. \textit{Logarithm-free A-hypergeometric functions}. Duke
Mathematical Journal 115, 1 (2002).


\bibitem{SST} Saito, M., Sturmfels, B., Takayama, N. \textit{Gröbner
Deformations of Hypergeometric Differential Equations}. Algorithms
and Computation in Mathematics 6. Springer (2000).


\bibitem{SW} Schulze, M., Walther, U. \textit{Irregularity of
hypergeometric systems via slopes along coordinate subspaces}. Duke
Mathematical Journal 142, 3 (2008), 465-509.


\bibitem{Sturm} Sturmfels, B. \textit{Gröbner
bases and convex polytopes}. University Lecture Notes, Vol. 8.
(1995) American Mathematical Society, Providence.

\bibitem{T} Takayama, N. \textit{Modified A-Hypergeometric Systems}. Kyushu Journal of Mathematics 63 (2009),
113--122. {\tt arXiv:0707.0043v2 [math.CA]}.

\end{thebibliography}
\end{document}